\newtheorem{theorem}{Theorem}
\newtheorem{lemma}[theorem]{Lemma}
\newtheorem{claim}[theorem]{Claim}
\theoremstyle{definition}
\newtheorem{definition}[theorem]{Definition}
\newcommand\cH{{\mathcal H}}
\newcommand\cC{{\mathcal C}}
\newcommand{\cD}{\mathcal{D}}
\newcommand\cM{{\mathcal M}}
\newcommand\cK{{\mathcal K}}
\newcommand\cF{{\mathcal F}}
\newcommand{\oram}{r_{odd}}
\definecolor{green}{RGB}{50,205,50}
\title{Odd Ramsey numbers of  multipartite graphs and hypergraphs}
\author{Nicholas Crawford \footnote{University of Colorado Denver, \texttt{nicholas.2.crawford@ucdenver.edu}.}
\and
Emily Heath \footnote {California State Polytechnic University Pomona, \texttt{eheath@cpp.edu}.}
\and Owen Henderschedt \footnote {Auburn University, \texttt{olh0011@auburn.edu}.}
\and Coy Schwieder \footnote {Iowa State University, \texttt{cschwi@iastate.edu}.}
\and Shira Zerbib \footnote {Iowa State University, \texttt{zerbib@iastate.edu}. Supported by NSF CAREER award no. 2336239  and Simons Foundation award no. MP-TSM-00002629.}}
\date{\today}
\begin{document}

\maketitle

\begin{abstract}
Given a hypergraph $G$ and a subhypergraph $H$ of $G$, the \emph{odd Ramsey number} $\oram(G,H)$ is the minimum number of colors needed to edge-color $G$ so that every copy of $H$ intersects some color class in an odd number of edges.
Generalizing a result of \cite{BHZ} in two different ways, in this paper we prove $\oram \left(K_{n,n}, K_{2,t} \right)=\frac{n}{t} + o(n)$ for all $t\geq 2$,  and $\oram \left(\cK^{(k)}_{n,\dots,n}, \cK_{1,\dots,1,2,2} \right) = \frac{n}{2} + o(n)$ for all $k\geq 2$.  The latter is the first result studying odd Ramsey numbers for hypergraphs. 
\end{abstract}

\section{Introduction}

The {\em generalized Ramsey number} $f(K_n,K_p,q)$, introduced by Erd\H{o}s and Shelah  \cite{Erdos-Shelah1}, is the minimum number of colors needed to edge-color the complete graph on $n$ vertices so that every copy of $K_p$ receives at least $q$ colors.   
In the special case where $q=2$, determining the generalized Ramsey numbers is equivalent to finding the diagonal Ramsey numbers $R(p,p)$. The problem can be further extended by replacing $K_n$ and $K_p$ by a hypergraph  $G$ and a subhypergraph $H$, respectively. Generalized Ramsey numbers have been studied extensively, see e.g.,~\cite{AFM,BEHK,BCDP,BED,CH1,CH2,CFLS,er-gy, Mubayi1,mubayi2004}. 

Recently, a variant of this problem has been introduced (see \cite{alon-codes,BDLP,BHZ,Versteegen}), asking for edge-colorings of $G$ in which every copy $C$ of $H$ intersects some color class in an odd number of edges. In this case, we say that $C$ {\em has an odd color class}. The minimum number of colors needed to produce such a coloring is called the \emph{odd Ramsey number}, $\oram(G,H)$. 

The study of $\oram(K_n,H)$ was initially motivated by a work of Alon~\cite{alon-codes} on bounding the size of \emph{graph-codes}. 
Given a family $\mathcal{H}$ of graphs on vertex set $[n]$, an \emph{$\mathcal{H}$-(graph)-code} is a collection of graphs $\mathcal{F}$ on vertex set $[n]$ such that the symmetric difference of any two members in $\cF$ is not a graph in $\mathcal{H}$. We denote the maximum possible cardinality of an $\mathcal{H}$-code by $D_{\mathcal{H}}(n)$, and  the maximum possible density of an $\mathcal{H}$-code by $$d_{\mathcal{H}}(n) = \displaystyle\frac{D_{\mathcal{H}}(n)}{2^{\binom{n}{2}}}.$$ In \cite{Versteegen}, Versteegen showed that $d_{H}(n) \geq \oram(K_n,H)^{-(e(H)/2 + 1)}$.  Thus, an upper bound on $\oram(K_n,H)$ implies a lower bound for $d_H(n)$.    
In this context, Versteegen \cite{Versteegen} obtained lower bounds on $\oram(K_n,H)$, when $H$ is a $K_4$-free graph with an even number of edges. To this end, he used a modification of the explicit coloring in \cite{CH1}.
In \cite{BDLPodd}, Boyadzhiyska, Das, Lesgourgues, and Petrova established a connection between odd Ramsey numbers and results from coding theory.  They showed $\oram(K_n, K_{t,n-t}) \in \{n-1, n\}$, and $$ (1 + o(1)) \left( \frac{n}{t} \right)^{1/\lceil s/2 \rceil} \leq \oram \left(K_n, K_{s,t} \right) \leq O \left(n^{\frac{2s + 2t - 4}{st}} \right),$$ where the upper bound is a direct corollary of the bound on $f \left(K_n, K_{s,t}, \frac{st}{2} + 1 \right)$ from  \cite{AFM}. 

In \cite{alon-codes} it was observed that a coloring in \cite{CH1} modifying constructions in \cite{CFLS,Mubayi1}  implies $\oram(K_{n},K_4)\le n^{o(1)}$.  
Bennett, Heath, and Zerbib \cite{BHZ}, and independently Ge, Xu, and Zhang \cite{GXZ}, then proved $\oram(K_{n},K_5)\le n^{o(1)}$. The next non-trivial case, $\oram(K_n, K_8) \leq n^{o(1)}$, was proven recently by Yip \cite{Yip}. All these results were obtained by providing an explicit (deterministic) edge-coloring of $K_n$.

For host graphs other than $K_n$, not much is known yet. In \cite{BHZ} it was shown that $\oram(K_{n,n},K_{2,2})=\frac{1}{2}n+o(n)$. The proof used a probabilistic coloring technique that utilized the {\em conflict-free hypergraph matching method} (see below).
 Joos and Mubayi \cite{JM} used this technique to obtain an upper bound on the generalized Ramsey number $f(K_n, C_4,3)$. 
 A similar approach has been used by various authors to obtain upper bounds on other generalized Ramsey numbers,  
 see e.g., \cite{BBHZ, BHZ, LM, GHPSZ}.

In this paper, we generalize the result $\oram(K_{n,n},K_{2,2})=\frac{1}{2}n+o(n)$ from \cite{BHZ} in two different ways.

\begin{theorem} \label{thm: main_graph}
    For any integer $t\geq 2$, we have 
    \[\oram \left(K_{n,n}, K_{2,t} \right)=\frac{n}{t} + o(n).\]
\end{theorem}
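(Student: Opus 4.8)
The plan is to establish matching lower and upper bounds. For the lower bound, I would argue that in any edge-coloring of $K_{n,n}$ with $c$ colors, if $c$ is too small, some copy of $K_{2,t}$ has every color class even. Fix two vertices $u,v$ on one side; they have $n$ common neighbors on the other side, and each such neighbor $w$ contributes the pair of edges $uw, vw$. A copy of $K_{2,t}$ on $\{u,v\}$ corresponds to choosing $t$ of these neighbors. Assigning to each neighbor $w$ the (unordered or multiset) ``type'' recording the colors of $uw$ and $vw$, a copy of $K_{2,t}$ has all color classes even precisely when the $t$ chosen types cancel in the appropriate $\mathbb{F}_2$-vector space (spanned by colors, counting $uw$ and $vw$). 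A counting/pigeonhole argument — if there are more than roughly $c\cdot t$ neighbors of some fixed type-behavior, or more generally once $n/(\text{number of available parity patterns}) $ is large — forces a canceling $t$-subset, giving $\oram(K_{n,n},K_{2,t}) \geq \frac{n}{t} - o(n)$. I would make this precise by bounding the number of "parity-distinct" classes one can maintain among the $n$ neighbors, mirroring the $K_{2,2}$ case in \cite{BHZ} but tracking $t$-wise rather than $2$-wise cancellation.

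For the upper bound, the plan is to produce a coloring with $(1+o(1))\frac{n}{t}$ colors via the conflict-free hypergraph matching method, generalizing the $K_{2,2}$ construction of \cite{BHZ}. First I would set up an auxiliary hypergraph whose vertices are the edges of $K_{n,n}$ (or edge–color incidences) and whose hyperedges encode "locally valid" partial color assignments; the conflict-free matching theorem then yields a near-perfect matching avoiding a prescribed family of conflicts. The conflicts to forbid are configurations that would make some copy of $K_{2,t}$ all-even: for each pair $\{u,v\}$ and each candidate $t$-set of common neighbors, the event that the $2t$ edges among them are colored so that every color appears an even number of times. I would check the two hypotheses of the method — a codegree/regularity condition on the auxiliary hypergraph, and a bound showing each conflict involves boundedly many vertices and the conflict system is sparse enough (the number of conflicts through any fixed vertex-tuple is controlled). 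Using roughly $\frac{n}{t}$ colors means each color class is a near-perfect matching-like structure of size about $t$ per vertex-pair locally, and a random-greedy/nibble heuristic already suggests this is the right threshold; the conflict-free method upgrades the heuristic to an actual coloring with the $o(n)$ error.

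The main obstacle I expect is the conflict analysis for the upper bound: unlike $K_{2,2}$, where an all-even copy just means the two parallel pairs get the same pair of colors, for general $t$ the "bad" colorings of a $K_{2,t}$ form a more intricate $\mathbb{F}_2$-linear family, so I must carefully enumerate and bound the conflict hyperedges — in particular showing that conflicts are "spread out" (bounded codegree in the conflict hypergraph) so that the conflict-free matching theorem applies. A secondary technical point is ensuring the target of $\frac{n}{t}$ colors is simultaneously enough to support a near-perfect matching in the auxiliary hypergraph (so the coloring exists) and few enough to stay below the lower bound; balancing these forces the $o(n)$ slack and requires the quantitative version of the matching theorem with explicit error terms. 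I would also need to handle divisibility/rounding of $n$ by $t$ and the boundary vertices, but that is routine. Finally, I would remark that the same framework should specialize back to the $t=2$ result of \cite{BHZ}, serving as a sanity check on the constants.
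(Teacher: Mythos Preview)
Your outline (pigeonhole lower bound, conflict-free matching upper bound) matches the paper's at the coarsest level, but both halves are missing the ideas that make the argument land at $n/t$.

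For the lower bound, fixing the size-$2$ side $\{u,v\}$ and pigeonholing on ``parity patterns'' in $\mathbb{F}_2^c$ does not give the right constant: with $c$ colors the number of possible types $(c(uw),c(vw))$ is of order $c^2$, so pigeonhole on types only forces a repeated type once $c\lesssim\sqrt{n/t}$, far from $n/t$. The paper instead counts from the other side. For each vertex $x$ in the prospective size-$t$ part, convexity (Cauchy--Schwarz) shows that with $c\le n/t$ colors there are at least $\frac{n}{t}\binom{t}{2}$ pairs $\{u,v\}$ with $c(xu)=c(xv)$; summing over all $x$ yields more than $(t-1)\binom{n}{2}$ such monochromatic cherries, so some pair $\{u,v\}$ is the base of $t$ of them, and those $t$ cherries form a $K_{t,2}$ with no odd color class. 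The switch from fixing the $2$-side to counting cherries from the $t$-side is what makes the pigeonhole hit exactly at $n/t$.

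For the upper bound, the gap is more serious: you have not said what the hyperedges of the auxiliary hypergraph are, and this is the main content of the proof. The paper's tiles in $\cH_1$ are \emph{not} single colored edges or edge--color incidences; each tile is a monochromatic copy of $K_{t+1,t+1}$ minus a perfect matching, packaged together with all pairs of vertices inside it and vertex--color incidences. It is precisely this choice that makes $\cH_1$ essentially $d$-regular with $d\sim n^{2t+1}/t!$ when one uses $\lceil n/t\rceil$ colors, so that the matching theorem applies at the target color count. A generic edge--color hypergraph does not have the right regularity here. Equally important, the naive conflict family you describe---all even-colored copies of $K_{2,t}$---is too dense for the codegree condition (C3); the paper has to introduce the notion of an \emph{irreducible} bad copy of $K_{2,r}$ (for all $2\le r\le t$), take only inclusion-minimal irreducible conflicts, and then devote most of Section~\ref{sec:conC} to a delicate leaf-creation/elimination bookkeeping argument to verify (C2)--(C3). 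Finally, the paper does not use the original conflict-free matching theorem but the two-conflict-system Tripartite Matching Theorem of Joos--Mubayi--Smith, with a second system $\cD$ that handles the leftover edges in the same step. Without the $K_{t+1,t+1}-(t+1)K_2$ tile and the irreducibility refinement, the degree and codegree hypotheses you would need are simply false.
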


\begin{theorem} \label{thm: main_hypergraph}
    For any integer $k\geq 2$, we have 
    \[\oram \left(\cK^{(k)}_{n,\dots,n}, \cK_{1,\dots,1,2,2} \right) = \frac{n}{2} + o(n).\]
\end{theorem}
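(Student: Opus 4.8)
\textbf{Proof plan for \Cref{thm: main_hypergraph}.}
Write $G=\cK^{(k)}_{n,\dots,n}$, with parts $V_1,\dots,V_k$, and $H=\cK_{1,\dots,1,2,2}$. Given a subhypergraph $C$ of $G$, a pair of parts $\{V_p,V_q\}$, and a choice of $z_i\in V_i$ for every $i\notin\{p,q\}$, the associated \emph{link} of $C$ is the bipartite graph on $V_p\cup V_q$ in which $u\in V_p$ and $v\in V_q$ are adjacent precisely when $\{u,v\}\cup\{z_i:i\notin\{p,q\}\}\in C$; since the four edges of a copy of $H$ share a common vertex in each singleton part and use one of two vertices in each of the two ``doubled'' parts, a copy of $H$ in $G$ is exactly a copy of $K_{2,2}$ sitting inside a single link of $G$. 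The lower bound is then immediate from the case $k=2$: given a valid $m$-colouring $\phi$ of $G$, fix the parts $V_1,V_2$ and a vertex $z_i\in V_i$ for $i\ge 3$, and set $\psi(uv)=\phi(\{u,v,z_3,\dots,z_k\})$; copies of $K_{2,2}$ in the resulting coloured $K_{n,n}$ correspond to copies of $H$ with the same induced colours, so $\psi$ witnesses $\oram(K_{n,n},K_{2,2})\le m$, whence $m\ge\tfrac12 n-o(n)$ by \cite{BHZ}.

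For the upper bound I would first reduce to a structured decomposition problem. A $2\times 2$ array of colours fails to contain an odd color class exactly when it is monochromatic or a ``$2+2$'' array, and a $2+2$ array is constant on its two rows, or on its two columns, or on its two diagonals. Hence it suffices to partition all but $o(n^k)$ of the edges of $G$ into $(\tfrac12+o(1))n$ classes such that (i) every link of every class has maximum degree at most $2$ and contains no $C_4$, and (ii) no link of $G$ contains two distinct classes that are each constant on one of the two rows (respectively columns, respectively diagonals) of some $2\times 2$ configuration; the remaining $o(n^k)$ edges are afterwards coloured with $o(n)$ fresh colours by a clean-up step. Condition (i) rules out monochromatic copies of $H$ and implies that each class has at most $2n^{k-1}$ edges, so any such partition already uses at least $(\tfrac12-o(1))n$ classes — the real content is to do it with only $(\tfrac12+o(1))n$, which forces the classes to be essentially $2$-regular in almost all of their links — while condition (ii) rules out the three types of $2+2$ copies.

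To build this partition I would extend the conflict-free hypergraph matching framework used for $k=2$ in \cite{BHZ}: encode partial colourings whose classes obey (i) as near-perfect matchings of a suitable auxiliary hypergraph on $E(G)$, encode the pairwise obstructions in (ii) as a bounded conflict system, verify the regularity, codegree, and conflict-boundedness hypotheses, and then apply the conflict-free matching theorem, handling the leftover via a quasirandomness-based clean-up. Two points need work. First, one must exhibit an abundance of admissible color-class structures — spanning subhypergraphs that are $2$-regular and of girth at least $6$ in each of their links; an explicit seed, which is then randomized to obtain the required abundance, is the family of classes $\{v\in V_1\times\cdots\times V_k:\ v_1+\cdots+v_k\in\{2u,2u+1\}\ (\mathrm{mod}\ n)\}$ for $0\le u<n/2$ (identifying each $V_i$ with $\mathbb{Z}_n$), each of which is, in every link, the union of two ``diagonal'' perfect matchings and hence a single Hamilton cycle. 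Second, and this I expect to be the main obstacle, one must bound — for each edge of $G$, each pair of edges, and within each conflict — the number of obstructions they participate in: a $k$-uniform edge now lies in $\binom{k}{2}$ links, so there are $\binom{k}{2}$ interacting families of $2+2$-obstructions, and although the relevant counts worsen only by this constant factor, the conflict-boundedness inequalities must be rechecked in the hypergraph geometry to confirm that the conflict system remains sparse enough for the method to apply for every fixed $k$.
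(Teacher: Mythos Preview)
Your lower bound via restriction to a single link is exactly what the paper does.

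For the upper bound you and the paper both reach for conflict-free hypergraph matching, but the paper's execution differs from your sketch in two essential ways, and your sketch has a real gap at the tile level. The paper's tiles are concrete: a colour $i\in N_1$ together with a copy $S$ of $\cK_{k+1,\dots,k+1}^{(k)}$, where only the ``Latin'' transversals of $S$ (those $k$-edges whose row-indices within $S$ are pairwise distinct) are taken as hyperedges. In any link this is precisely a $C_6$, so your condition~(i) comes for free. The crucial trick is that $V(\cH_1)$ contains, besides $E(G)$ and the colour-tracking vertices, all $k$-sets with two vertices in one part and one in each of $k-2$ others; these ``non-edge'' vertices force any two tiles of a matching that meet a common $2\times 2$ window of some link to do so only along a diagonal. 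Hence of your three $2{+}2$ patterns only the diagonal one survives as a genuine conflict, and (C1)--(C3) are short to check. Your plan, by contrast, keeps all three $2{+}2$ types as conflicts and never says what the bounded-size tiles are: the Hamilton-cycle seed $\{v:\sum_i v_i\in\{2u,2u+1\}\pmod n\}$ is a global colour class with $2n^{k-1}$ edges, not a tile, and ``randomise the seed'' does not produce an auxiliary hypergraph on which the degree and codegree hypotheses can even be formulated.

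Second, the paper dispenses with your clean-up step by invoking the Tripartite Matching Theorem of Joos--Mubayi--Smith: a second hypergraph $\cH_2$ (tiles are single edges coloured from a secondary palette of size $n^\delta$) and a second conflict system $\cD$ are folded into the same matching instance, and the output is $P$-perfect, i.e.\ a complete valid colouring in one shot. Your plan to colour the leftover $o(n^k)$ edges with $o(n)$ fresh colours still owes an argument that this introduces no bad copy of $H$; already for $k=2$ that step was nontrivial in~\cite{BHZ}, and here it is simply absent.
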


The proofs of the upper bounds in both theorems use the {\em Tripartite Matching Theorem}, a new version of the conflict-free hypergraph matching method recently introduced by Joos, Mubayi, and Smith \cite{JMS}. Theorem \ref{thm: main_hypergraph} is the first result studying odd Ramsey numbers for hypergraphs.

The organization of the paper is as follows. In Section \ref{sec:method}, we outline the Tripartite Matching Theorem \cite{JMS}. The proof of Theorem \ref{thm: main_graph} is then given in Section \ref{sec:pmain}, and the proof of Theorem \ref{thm: main_hypergraph} is given in Section \ref{sec:hyp}.

\section{The Tripartite Matching Theorem} \label{sec:method}

The conflict-free hypergraph matching method 
is a powerful probabilistic tool that, given some conditions, guarantees the existence of a matching avoiding certain negative constraints (conflicts) in a hypergraph. 
The method was introduced by Glock, Joos, Kim, K\"uhn, and Lichev  \cite{GJKKL} and independently by Delcourt and Postle \cite{DP}, and it has already seen many applications, including bounds for odd Ramsey and generalized Ramsey numbers (see e.g., \cite{BBHZ,BDLP,BHZ,GHPSZ,JM,JMS,LM}).

In this paper, we use a new version of the method, the Tripartite Matching Theorem, which was proven recently by Joos, Mubayi, and Smith \cite{JMS} (Theorem \ref{thm:blackbox} below). Very recently Bal and Bennett improved the upper bound on $f(K_n, C_4,3)$ using the Tripartite Matching Theorem \cite{bb-25}. The key difference between the original method and the new version involves properties of the matching obtained by the theorem. In the original method, there is a hypergraph $\cH$ and a {\em conflict system} $\cC$ for $\cH$, where $V(\cC) = E(\cH)$. The edges of $\cC$ correspond to \emph{conflicts}, namely sets of edges in $\cH$ that should not be included together in the resulting matching. As long as some conditions on the degrees in $\cH$ and $\cC$ hold, the method guarantees the existence of an \emph{almost perfect matching} of $\cH$ that does not contain any of the conflicts in $\cC$. 

Applying this method to edge-coloring, one defines a hypergraph $\cH$ in which an almost perfect matching corresponds to an edge-coloring of a dense subgraph of $G$ with $k$ colors. The conflicts in $\cC$ are defined to correspond to such colorings that would not satisfy the odd- or generalized- Ramsey requirements. Thus, if the conditions of the theorem hold, one obtains a matching corresponding to a coloring of most of the edges in $G$. The edges that remain uncolored must be colored in a second step, usually by a random coloring with a new set of colors of size $o(k)$; then it must be shown that this can be done in a way that preserves the odd- or generalized-Ramsey properties required. 

In the Tripartite Matching Theorem, the method was modified to produce a matching that corresponds to a coloring of {\em all} the edges in $G$ in one step. 
  This was done by introducing a second conflict system $\cD$ in addition to $\cC$. In this version,  a matching in $\cH$ contains two types of edges: the first type corresponds to a coloring of the majority of the edges in $G$ much like the initial version, and the second type corresponds to a coloring of the rest of the edges. The conflict system $\cC$ consists of conflicts containing only edges of the first type, while the new conflict system $\cD$ consists of conflicts containing both types of edges. This replaces the need to have a second coloring step.

We now state the Tripartite Matching Theorem.
Given a hypergraph $\cH$, and a vertex $v \in V(\cH)$, the  \emph{degree} $\deg_\cH(v)$ of $v$ is the number of edges in $\cH$ containing $v$.  
The maximum degree and minimum degree of $\cH$ are denoted by $\Delta(\cH)$ and $\delta(\cH)$, respectively.  For $j \geq 2$, $\Delta_j(\cH)$ denotes the maximum $j$-codegree, that is, the maximum number of edges in $\cH$ containing a particular set of $j$ vertices, over all vertex subsets of size $j$. Given a subset of vertices $X\subseteq V(\mathcal{H})$, $\delta_X(\mathcal{H})$ and $\Delta_X(\mathcal{H})$ denote the minimum and maximum degree in $\mathcal{H}$ of any vertex from $X$, respectively. An \textit{$X$-perfect matching} in $\mathcal{H}$ is a matching which saturates every vertex in $X$.

For a hypergraph $\cC$ and an integer $k$, let $\cC^{(k)}$ be the set of edges in $\cC$ of size $k$.  For a vertex $u \in V(\cC)$, let $\cC_u$ denote the hypergraph $\{C \setminus \{u\} \colon C \in E(\cC), u \in C\}$.  Given a hypergraph $\cH$, a hypergraph $\cC$ is a \emph{conflict system} for $\cH$ if $V(\cC) = E(\cH)$.  A set of edges $E \subset \cH$ is \emph{$\cC$-free} if $E$ contains no subset $C \in \cC$.

Assume $\ell \geq 3$, $d > 0$, $p\ge 1$, $q\ge 0$, and $r \geq 1$ are integers, and $\varepsilon \in (0,1)$.  Let $P, Q, R$ be disjoint sets such that $d^\varepsilon \leq |P| \leq |P \cup Q| \leq \exp(d^{\varepsilon^3})$.  Let $\cH_1$ be a $(p+q)$-uniform hypergraph  on vertex set $P\cup Q$ whose every edge consists of $p$ vertices from $P$ and $q$ vertices from $Q$. Let $\cH_2$ be a $(1+r)$-uniform hypergraph on vertex set $P\cup R$ whose edges consist of one vertex from $P$ and $r$ vertices from $R$.  Finally, let $\cC$ be a conflict system for $\cH_1$ and $\cD$ be a conflict system for $\cH := \cH_1 \cup \cH_2$.  

Assume $\cH$ satisfies the following degree conditions:
\begin{enumerate}[label=(H\arabic*)]
    \item \label{H1} $(1 - d^{-\varepsilon}) d \leq \delta_P(\cH_1) \leq \Delta(\cH_1) \leq d$ (if $\cH_1$ satisfies this condition we  say that $\cH_1$ is {\em essentially $d$-regular)};
    \item \label{H2} $\Delta_2(\cH_1) \leq d^{1-\varepsilon}$;
    \item \label{H3} $\Delta_R(\cH_2) \leq d^{\varepsilon^4} \delta_P(\cH_2)$;
    \item \label{H4} $d(x,v) \leq d^{-\varepsilon} \delta_P(\cH_2)$ for each $x \in P$ and $v \in R$.
\end{enumerate}

If $\cC$ satisfies the following  conditions, we say $\cC$ is \emph{$(d, \ell, \varepsilon)$-bounded}:
\begin{enumerate}[label=(C\arabic*)]
    \item \label{C1} $3 \leq |C| \leq \ell$ for all $C \in \cC$;
    \item \label{C2} $\Delta(\cC^{(j)}) \leq \ell d^{j-1}$ for all $3 \leq j \leq \ell$;
    \item \label{C3} $\Delta_{j'}(\cC^{(j)}) \leq d^{j-j'-\varepsilon}$ for all $3 \leq j \leq \ell$ and $2 \leq j' \leq j-1$.   
\end{enumerate}

Let $\cD^{(j_1, j_2)}$ denote the set of conflicts in $\cD$ consisting of $j_1$ edges from $\cH_1$ and $j_2$ edges from $\cH_2$.  Similarly, let $\Delta_{j'_1, j'_2}(\cD)$ denote the maximum degree among sets of edges $F = F_1 \cup F_2 \subseteq \cH$ consisting of $|F_1| = j'_1$ edges from $\cH_1$ and $|F_2| = j'_2$ edges from $\cH_2$.  Lastly, for $x \in P$, let $\cD_x$ denote the set of conflicts in $\cD$ containing $x$ in their $\cH_2$-part, and $\cD_{x,y}$ denote the set of conflicts containing both $x$ and $y$ in their $\cH_2$-part.

If $\cD$ satisfies the following  conditions for all $x,y \in P$ and $0 \leq j_1 \leq \ell, 2 \leq j_2 \leq \ell$, we say that $\cD$ is \emph{$(d, \ell, \varepsilon)$-simply-bounded}:
\begin{enumerate}[label=(D\arabic*)]
    \item \label{D1} $2 \leq |D \cap \cH_2| \leq |D| \leq \ell$ for each conflict $D \in \cD$;
    \item \label{D2} $|\cD_x^{(j_1,j_2)}| \leq d^{j_1 + \varepsilon^4} \delta_P(\cH_2)^{j_2}$; 
    \item \label{D3} $\Delta_{j',0}(\cD_x^{(j_1,j_2)}) \leq d^{j_1 - j' - \varepsilon} \delta_P(\cH_2)^{j_2}$ for each $0 \leq j' \leq j_1$;
    \item \label{D4} $|\cD_{x,y}^{(j_1,j_2)}| \leq d^{j_1 - \varepsilon} \delta_P(\cH_2)^{j_2}$.
\end{enumerate}

Assuming that $\mathcal{C}$ is $(d,\ell,\varepsilon)$-bounded and $\mathcal{D}$ is $(d,\ell,\varepsilon)$-simply-bounded, Joos et al. proved the following theorem.

\begin{theorem}[\cite{JMS}] \label{thm:blackbox}
    For $p + q = k \geq 2$, there exists $\varepsilon_0 > 0$ such that for all $\varepsilon \in (0, \varepsilon_0)$, there exists $d_0$ such that given the above setup, the following holds for all $d \geq d_0$: there exists a $P$-perfect matching $\cM \subseteq \cH$ which contains none of the conflicts from $\cC \cup \cD$.  Furthermore, at most $d^{-\varepsilon^4} |P|$ vertices of $P$ belong to an edge in $\cH_2 \cap \cM$.  
\end{theorem}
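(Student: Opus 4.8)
The plan is to adapt the conflict-free hypergraph matching method to this two-conflict-system, $P$-perfect setting. I would build $\cM$ through a semi-random ``nibble'' in which edges of $\cH_1$ do the bulk of the work and edges of $\cH_2$ serve as a thin corrective layer, and then clean up the last few vertices of $P$ with $\cH_2$-edges.

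\emph{The nibble.} Over $\Theta(\varepsilon^{-1}\log d)$ rounds, maintain a $(\cC\cup\cD)$-free partial matching together with the hypergraph of still-available edges (those disjoint from the matching and completing no conflict). In each round, select each available $\cH_1$-edge independently with probability $\Theta(1/d)$ and each available $\cH_2$-edge with a much smaller probability, tuned so that over the whole process at most $d^{-\varepsilon^4}|P|$ vertices of $P$ ever land on $\cH_2$-edges; then discard every selected edge that collides with another selected edge or completes a conflict, and add the survivors to the matching. The heart of the argument is a trajectory (differential-equations) analysis showing that, with probability $1-\exp(-d^{\varepsilon^2})$, every relevant statistic stays within a multiplicative error $d^{-\Omega(\varepsilon^2)}$ of an explicit trajectory throughout: the numbers of uncovered vertices of $P$ and of $Q$; the $\cH_1$- and $\cH_2$-degree of each uncovered vertex; the conflict-degrees $\Delta(\cC^{(j)})$ and $\Delta_{j'}(\cC^{(j)})$ on the surviving edges; and the analogous quantities $|\cD^{(j_1,j_2)}_x|$, $\Delta_{j',0}(\cD^{(j_1,j_2)}_x)$, $|\cD^{(j_1,j_2)}_{x,y}|$ for $\cD$ on the surviving edges. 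The hypotheses \ref{H1}--\ref{H2}, \ref{C1}--\ref{C3} and \ref{D1}--\ref{D4} are precisely what make the one-step expectations and one-step increments of these statistics the right sizes for the trajectories to close up; concentration then follows by writing each statistic as a supermartingale (after subtracting its trend) and applying Freedman's inequality, and the union bound over all the tracked statistics succeeds because $|P\cup Q|\le\exp(d^{\varepsilon^3})$ while $\varepsilon^3<\varepsilon^2$. Running enough rounds that $\cH_1$ leaves at most $d^{-\varepsilon^4}|P|$ vertices of $P$ uncovered also delivers the ``furthermore'' clause, since from that point on only $\cH_2$-edges (one per leftover vertex) are added.

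\emph{Finishing $P$.} Let $U\subseteq P$ be the set of still-unsaturated vertices, so $|U|\le d^{-\varepsilon^4}|P|$. By \ref{H3}--\ref{H4} each $x\in U$ still meets many available $\cH_2$-edges, spread thinly over $R$, so the conflict graph on $x$'s candidate edges --- collisions in $R$ together with completions of a $\cD$-conflict with the edges already chosen --- leaves all but a vanishing proportion of the candidates at degree zero; here one uses \ref{D1}, namely that every $\cD$-conflict contains at least two $\cH_2$-edges, so no conflict can be triggered purely by edges added later in this step, and the count of dangerous candidates is governed by the shadow bounds \ref{D2}--\ref{D4} that the nibble preserved. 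Processing $U$ greedily, one vertex at a time, therefore covers all of $U$, and appending these edges produces the desired $P$-perfect, $(\cC\cup\cD)$-free matching $\cM$.

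\emph{Main obstacle.} The delicate point is the simultaneous concentration of the $\cD$-statistics through the nibble. Because a $\cD$-conflict mixes the two edge types while those types are sampled at rates differing by a large power of $d$, the per-round expectation and martingale-increment estimates for quantities such as $\Delta_{j',0}(\cD^{(j_1,j_2)}_x)$ involve two scales at once, and one must verify that these configurations decay at exactly the same rate as ordinary degrees --- this is what forces the exponents $j_1-j'-\varepsilon$ and the slack $d^{\varepsilon^4}$ in \ref{D2}--\ref{D4}, and it is the place where the proof genuinely departs from the single-conflict-system version. A related subtlety is that the finishing step must never strand a vertex of $U$; this is why one tracks the full shadow of $\cD$ --- ensuring each leftover vertex a robust, not merely nonempty, supply of $\cD$-compatible $\cH_2$-edges --- rather than only the final uncovered count.
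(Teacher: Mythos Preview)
The paper does not prove this theorem: it is the \emph{Tripartite Matching Theorem} of Joos, Mubayi, and Smith \cite{JMS}, quoted here as a black box (hence the label \texttt{thm:blackbox} and the citation in the theorem header). The paper only states the hypotheses \ref{H1}--\ref{H4}, \ref{C1}--\ref{C3}, \ref{D1}--\ref{D4} and the conclusion, then applies the result in Sections~\ref{sec:pmain} and~\ref{sec:hyp}. So there is no proof in this paper to compare your proposal against.

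For what it is worth, your outline is a plausible high-level sketch of how results of this type are proved --- a semi-random nibble on $\cH_1$ with trajectory tracking via Freedman-type concentration, followed by a greedy cleanup with $\cH_2$-edges --- and your identification of the two-scale $\cD$-statistics as the new difficulty over the single-conflict-system version is on the mark. But this is a substantial standalone theorem whose full proof occupies a significant portion of \cite{JMS}; a genuine verification would require checking the details there, not here.
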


\section{Proof of Theorem \ref{thm: main_graph}} \label{sec:pmain}

We start by showing that 
$\oram \left(K_{n,n}, K_{2,t} \right)>\frac{n}{t}$. Indeed, let the partite sets of $K_{n,n}$ be $X$ and $Y$, and suppose the edges are colored by at most   $\frac{n}{t}$ colors. By the Cauchy–Schwarz inequality, every vertex $x\in X$ has at least $\frac{n}{t}{t\choose 2}$  pairs of vertices $u,v \in Y$ so that $xu,xv$ are colored by the same color. It follows that there are  at least $\frac{n^2}{t}{t\choose 2}> {n\choose 2}(t-1)$ pairs  $u,v \in Y$ so that there exists some $x\in X$ with the edges $xu,xv$ having the same color. Since there are ${n\choose 2}$ pairs of vertices in $Y$, by the pigeonhole principle, there must exist at least $t$ vertices $x_1,\dots,x_t \in X$ and a pair vertices $u,v \in Y$, so that $x_iu, x_iv$ has the same color $i$ for all $i\in [t]$. But this gives a copy of $K_{t,2}$ that has no odd color class.

Let $N_1$ and $N_2$ be disjoint sets of $\lceil \frac{n}{t}\rceil$ and $\lceil n^\delta \rceil$ colors, respectively,  where $\delta<1$ will be determined later in the proof.
To prove $\oram \left(K_{n,n}, K_{2,t} \right)\leq \frac{n}{t} + o(n)$, it suffices to find an edge-coloring of $K_{n,n}$ with colors taken from the set $N_1  \cup N_2$, such that each copy of $K_{2,t}$ has an odd color class.

Let $K_{n,n}$ have partite sets $X$ and $Y$.  Let $U = \binom{X \cup Y}{2}$ and $V = \cup_{i \in N_1} V_i$, where each $V_i$ is a copy of $X \cup Y$.  We denote the copy of $v \in X \cup Y$ in $V_i$ by $v^i$.  Let $\cH_1$ be the $(2(t+1)^2)$-uniform hypergraph with vertex set $U \cup V$, whose edges are defined as follows: for every $i \in N_1$,  for every copy $S'$ of $K_{t+1,t+1}$ in $K_{n,n}$, and for every perfect matching $M$ in $S'$, let $S=S'-M$, and define an edge  $e_{S,i}$ of $\cH_1$  as

    $$
        e_{S,i} =\{v^i \colon v\in V(S)\}\cup \{xy\in E(S) \} \cup \left\{xx'\in\binom{V(S)\cap X}{2} \right\} \cup \left\{ yy'\in \binom{V(S)\cap Y}{2}\right\}.
    $$
Note that each $e_{S,i} \in \cH_1$ corresponds to a copy $S$  of the graph $K_{t+1,t+1} - (t+1)K_2 $ in $k_{n,n}$, whose all edges  are colored by the color $i \in N_1$.

Let $\cH_2$ be the 3-uniform hypergraph with vertex set $E(K_{n,n}) \cup W$, where $W = \cup_{i \in N_2} W_i$ and each $W_i$ is a copy of $X \cup Y$.  We denote the copy of $w \in X \cup Y$ in $W_i$ as $w^i$.  
For each $xy \in E(K_{n,n})$ and each $i \in N_2$, let  $\cH_2$ contain the edge
$$
    e_{xy,i} = \left\{ xy, x^{i}, y^{i} \right\}.
$$

Note that every edge $e_{xy,i}$ of $\cH_2$ corresponds to an edge $xy \in K_{n,n}$ that is colored by the color $i \in N_2$. 
Further, a matching in $\cH_1$ corresponds to a set of edge-disjoint monochromatic copies of $K_{t+1, t+1}-(t+1)K_2$ in $K_{n,n}$,  such that no vertex of $K_{n,n}$ appears in two copies of $K_{t+1, t+1} - M$ of the same color. A matching in $\cH_2$ corresponds to a set of edges in $K_{n,n}$ that are properly colored. A matching in $\cH := \cH_1 \cup \cH_2$ is the union of two such matchings, and it corresponds to a well-defined (partial) edge-coloring of $K_{n,n}$ (that is, no edge gets more than one color). 

To align our notation with those of Theorem \ref{thm:blackbox}, define $P := U$, $Q := V$, and $R := W$.  For the rest of the proof, we refer to the edges of $\cH$ as \emph{tiles}, to the edges of $K_{n,n}$ as {\em edges}, and the edges of the complement of $K_{n,n}$ as {\em non-edges}. Note that every tile of $cH_1$ contains both edges and non-edges.  

Let $d = \frac{n^{2t+1}}{t!}$, $\varepsilon\in (0,\frac{1}{2t+2})$, and $\delta\in (1-(2t+1)\varepsilon^4,1)$.

\begin{claim}
Conditions \ref{H1}-\ref{H4} hold,
    and $d^\varepsilon \leq |P| \leq |P \cup Q| \leq \exp(d^{\varepsilon^3})$ for sufficiently large $n$.
\end{claim}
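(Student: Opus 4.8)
The plan is a direct verification. First I would record the cardinalities $|P|=|U|=\binom{2n}{2}=2n^2-n$, $|Q|=|V|=2n\lceil n/t\rceil$, and $|R|=|W|=2n\lceil n^{\delta}\rceil$, and note that $\cH_1$ is $2(t+1)^2$-uniform with exactly $p=2t(t+1)$ vertices of each edge in $P$ (the pairs in $\binom{V(S)}{2}$, i.e.\ the edges and non-edges of $K_{n,n}$ spanned by $S$) and $q=2(t+1)$ in $Q$ (the labelled copies $v^i$ of $V(S)$), while $\cH_2$ is $(1+2)$-uniform. The size condition is then immediate: from $\varepsilon<\tfrac1{2t+2}$ we get $(2t+1)\varepsilon<1$, so $d^{\varepsilon}\le n^{(2t+1)\varepsilon}=o(n)=o(|P|)$, while $|P\cup Q|=O(n^2)$ and $\exp(d^{\varepsilon^3})\ge\exp(c\,n^{(2t+1)\varepsilon^3})$ is superpolynomial in $n$ since $(2t+1)\varepsilon^3>0$.

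For \ref{H1} I would compute $\deg_{\cH_1}$ of a vertex of $P$ in two cases. If $\{u,v\}$ is an edge of $K_{n,n}$ (say $u\in X$, $v\in Y$), an edge $e_{S,i}$ contains it exactly when $u,v\in V(S)$ and $uv$ is not the deleted matching edge, so $\deg_{\cH_1}(\{u,v\})=\lceil n/t\rceil\binom{n-1}{t}^2\big((t+1)!-t!\big)$; if $u,v$ lie in the same part (say in $X$), the condition is just $u,v\in V(S)\cap X$ with no matching restriction, so $\deg_{\cH_1}(\{u,v\})=\lceil n/t\rceil\binom{n-2}{t-1}\binom{n}{t+1}(t+1)!$. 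A routine expansion shows both equal $d\,(1+O(1/n))$; the delicate point, which I expect to be the main obstacle, is that they are in fact $\le d$ for large $n$ (in both expansions the $n^{2t}$-term carries a negative coefficient, of the form $(r-t(t+1))/t!$ or $(r-t(t+1)+1)/t!$ with $0\le r\le t-1$), and that they are $\ge(1-d^{-\varepsilon})d$ because $d-\deg_{\cH_1}(\cdot)=O(n^{2t})=o(d^{1-\varepsilon})$, using $(2t+1)\varepsilon<1$. A similar, smaller count gives $\deg_{\cH_1}(v^i)=\binom{n-1}{t}\binom{n}{t+1}(t+1)!<d$ for every $v^i\in Q$. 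Together these yield $\delta_P(\cH_1)\ge(1-d^{-\varepsilon})d$ and $\Delta(\cH_1)\le d$, which is \ref{H1}.

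For \ref{H2} I would bound $\Delta_2(\cH_1)$ by casework on where the two fixed vertices lie. If both lie in $Q$ they belong to a common $V_i$, so the colour is fixed and the count is $\le\binom{n-1}{t}\binom{n}{t+1}(t+1)!=O(n^{2t})$; if one lies in $P$ and one in $Q$, the colour is again fixed and at least three $(X\cup Y)$-vertices are forced into $V(S)$, giving $O(n^{2t})$; if both lie in $P$ we sum over the $O(n)$ colours of $N_1$, but the two pairs force at least three distinct vertices into $V(S)$, so completing $V(S)$ costs only $O(n^{2(t+1)-3})$, again $O(n^{2t})$. Hence $\Delta_2(\cH_1)=O(n^{2t})$, and since $d^{1-\varepsilon}=\Theta(n^{(2t+1)(1-\varepsilon)})$ with $(2t+1)(1-\varepsilon)=2t+1-(2t+1)\varepsilon>2t$, we obtain $\Delta_2(\cH_1)\le d^{1-\varepsilon}$ for large $n$.

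Finally, for \ref{H3}--\ref{H4} I would use that in $\cH_2$ every edge $xy$ of $K_{n,n}$ has degree exactly $|N_2|=\lceil n^{\delta}\rceil$, so $\delta_P(\cH_2)=\lceil n^{\delta}\rceil$; that every $w^i\in R$ has degree equal to the $K_{n,n}$-degree of $w$, namely $n$, so $\Delta_R(\cH_2)=n$; and that for $x\in P$, $v\in R$ at most one edge of $\cH$ contains both, so $d(x,v)\le 1$. Then \ref{H3} reads $n\le d^{\varepsilon^4}\lceil n^{\delta}\rceil=\Theta(n^{(2t+1)\varepsilon^4+\delta})$, which holds for large $n$ precisely because $\delta>1-(2t+1)\varepsilon^4$; and \ref{H4} reads $1\le d^{-\varepsilon}\lceil n^{\delta}\rceil=\Theta(n^{\delta-(2t+1)\varepsilon})$, which holds for large $n$ once $\delta>(2t+1)\varepsilon$ --- an inequality that follows from $\delta>1-(2t+1)\varepsilon^4$ together with $\varepsilon<\tfrac1{2t+2}$, since these force $(2t+1)(\varepsilon+\varepsilon^4)<1$. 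Assembling the four degree conditions with the size bound completes the proof of the claim.
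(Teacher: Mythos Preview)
Your proof is correct and follows essentially the same approach as the paper's: a direct case-by-case degree count for \ref{H1}, a brief codegree bound for \ref{H2}, and the elementary observations $\delta_P(\cH_2)=\lceil n^\delta\rceil$, $\Delta_R(\cH_2)=n$, $d(x,v)\le 1$ for \ref{H3}--\ref{H4}. You are in fact more careful than the paper in two places: you explicitly verify that the $n^{2t}$ coefficient in each $P$-degree expansion is negative (so that $\Delta(\cH_1)\le d$ really holds, not just $\Delta(\cH_1)=d+O(n^{2t})$), and you derive $\delta>(2t+1)\varepsilon$ from the stated hypotheses $\varepsilon<\tfrac{1}{2t+2}$ and $\delta>1-(2t+1)\varepsilon^4$ rather than adding it as an extra assumption. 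One small imprecision: in your \ref{H2} case ``one vertex in $P$, one in $Q$'', the $Q$-vertex $v^i$ and the $P$-pair $\{u,w\}$ need not force three distinct graph vertices (it could be that $v\in\{u,w\}$), but the bound $O(n^{2t})$ still holds since the colour and at least two graph vertices are fixed, leaving at most $2t$ vertices to choose.
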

\begin{proof}
We first show that the hypergraph $\cH_1$ is essentially $d$-regular for $d = \frac{n^{2t+1}}{t!}$. To see this we show that for any vertex $u \in V(\cH_1)$, we have $$(1-d^{-\varepsilon})d = d - d^{1-\varepsilon} \leq d- O \left(n^{2t+1 - \varepsilon (2t+1)} \right) \leq d- O \left(n^{2t+1 - 1} \right) = d - O(n^{2t}) \leq \deg_{\cH_1} (u) \leq d.$$  First, suppose $u \in V_i$ for some $i\in N_1$.  There are $\frac{n^{2t+1}}{t!} - O(n^{2t-1})$ many tiles in $\cH_1$ containing $u$ since there are $\binom{n}{t+1}$ ways to choose the vertices in the vertex part not containing $u$, $\binom{n-1}{t}$ ways to choose the vertices in the same vertex part as $u$, and $(t+1)!$ ways to choose which perfect matching to remove. If instead $u = xy$ for $x \in X, y \in Y$, then there are $\binom{n-1}{t}$ ways to choose the remaining vertices in each vertex part, there are $t \cdot t!$ ways to choose a matching to remove (we know $xy$ is not in the matching), and $\lceil \frac{n}{t}\rceil$ ways to choose the color. Similarly, if $u = xx'$ for $x,x' \in X$ (or $yy'$ for $y,y' \in Y$), there are $\binom{n-2}{t-1}$ ways to choose the remaining vertices of the same vertex part, $\binom{n}{t+1}$ ways to choose the vertices of the other vertex part, $(t+1)!$ ways to choose the matching, and $\lceil \frac{n}{t}\rceil$ ways to choose the color. Thus, in each case, we have $\deg_{\cH_1} (u) = \frac{n^{2t+1}}{t!} - O(n^{2t})$, satisfying \ref{H1}.

    Furthermore, we have $\Delta_2 (\cH_1) \leq d^{1- \varepsilon}$ for all $\varepsilon \in (0,\frac{1}{2t+2})$, since each pair of vertices $u,v \in V(\cH_1)$ is contained in at most $O(n^{2t})$ tiles in $\cH_1$.  To see this, given $u,v \in V(\cH_1)$, then either at least two vertices and a color of the copy of $K_{t+1,t+1}$ are given, or at least three of the vertices of a copy of $K_{t+1,t+1}$ are given.  Hence, there are at most $O(n^{2t})$ tiles containing both $u,v$, satisfying \ref{H2}.

    Now consider some $w^i\in R$. Then $\deg_R(w^i) = n$ since $w$ has exactly $n$ neighbors in $K_{n,n}$. This implies that for any $\varepsilon \in (0,\frac{1}{2t+2})$ and $\delta \in (1-(2t+1)\varepsilon^4,1)$ we have $n \leq n^{(2t+1)\varepsilon^4}\cdot n^{\delta}$. Thus, $\Delta_R(\mathcal{H}_2)\leq d^{\varepsilon^4}\delta_P(\mathcal{H}_2)$ and condition \ref{H3} is satisfied.

 To check \ref{H4}, fix $v\in R$, say $v = w^i$, where $i\in N_2$. Now fix $x\in P$ where $x = z_1z_2$ for some $z_1,z_2\in X\cup Y$. If $z_1z_2\notin E(K_{n,n})$ then it cannot appear in a hyperedge with $w^i$. Similarly, if $w\notin \{z_1,z_2\}$ then $z_1z_2$ also cannot appear in a hyperedge with $w^i$. Finally if without loss of generality, $w = z_1$, then $wz_2$ is in exactly one edge, namely $\{wz_2, w^i,z_2^i\}$, with $w^i$. Thus, $d(x,v) \leq 1$. Note that $d^{-\varepsilon} n^\delta$ is always bigger than 1 for $\varepsilon \in (0, \frac{1}{2t+2})$ and sufficiently large $\delta < 1$ since $\varepsilon (2t+1) < \delta$. Thus \ref{H4} is satisfied.
 
   Finally, note that 
    $$
        d^\varepsilon 
        \leq n^{\frac{2t+1}{2t+2}} \leq O(n^2) 
        = |P|
        \leq |P \cup Q|
        = O(n^2)
        \leq \exp \left(\frac{n^{\frac{2t+1}{(2t+2)^3}}}{t!} \right)
        = \exp \left( d^{\varepsilon^3} \right)
    $$
    for sufficiently large $n$.
\end{proof}

\subsection{The conflict system $\cC$}\label{sec:conC}

We say that an edge-colored copy $K$ of $K_{2,r}$ is {\em bad} if it has no odd color class. 
Recall that every tile $T$ in $\cH_1$ corresponds to a copy $S_T$ of the graph $K_{t+1,t+1}-(t+1)K_2$, whose edges are colored with some color $i$. If   $C = \{ T_1, \dots, T_j\}$ is a matching in $\cH_1$, denote by $E_C$ the colored edges in $\bigcup_{i=1}^j S_{T_i}$. If $K$ is a bad copy of $K_{2,r}$ for some $r\ge 1$,  we say that $K$ is {\em reducible} if there exist $1 \leq r_1 \leq r_2 \leq \dots \leq r_k < r$, with $k\ge 2$, so that  the edges of $K$ can be partitioned into bad  copies $K_1$ of $K_{2,r_1}$, $K_2$ of $K_{2,r_2}$, $\dots$, $K_k$ of $K_{2,r_k}$, all having the same vertex side of size 2 as $K$. Otherwise, we say that $K$ is {\em irreducible}. (See an example of a reducible bad copy of $K_{2,6}$ in \cref{fig:reducible}.) 
Note that a bad copy of $K_{2,1}$ is irreducible.

\begin{figure}[h!]
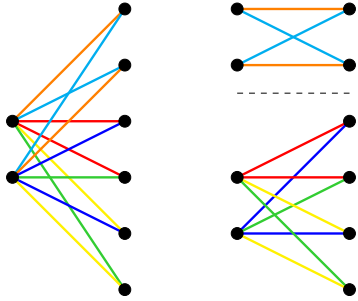

    \centering
    \includestandalone[width=0.3\textwidth]{reducible-conflict}
    \caption{An example of a reducible coloring of $K_{2,6}$}
    \label{fig:reducible}
\end{figure}

We now define the conflict system $\cC$.

\begin{definition}\hfill
\begin{itemize}
    \item A matching $C = \{T_1, \dots, T_j\}$  in $\cH_1$ with $j\ge 2$ is called a {\em conflict (with respect to $\mathcal{H}_1$)} if  $E_C$ contains a bad copy $K$ of  $K_{2,r}$ for some $r\ge 1$, so that $K$ uses at least one edge from each tile in $C$. 
 \item  A conflict $C = \{T_1, \dots, T_j\}$  in $\cH_1$ is  {\em irreducible} if $E_C$ contains (at least one) irreducible bad copy $K_C$ of  $K_{2,r}$ for some $r\ge 1$, so that $K_C$ uses at least one edge from each tile in $C$. 
\item A conflict  $C$ that is not irreducible is called {\em reducible}. 
 \item   Let $\cC$ be the set of all inclusion-minimal irreducible conflicts in $\cH_1$.
 \end{itemize}   
\end{definition}
\medskip
 
\begin{claim}\label{claim:conflictsize1}
If $C$ is a conflict with respect to $\mathcal{H}_1$ then $|C|\ge 3$.
\end{claim}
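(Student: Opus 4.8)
The plan is to rule out conflicts of size $1$ and size $2$ directly from the definitions. Size $1$ is immediate: a conflict is a matching $C = \{T_1,\dots,T_j\}$ with $j \geq 2$, so $|C| \geq 2$ by definition. The real content is showing $|C| \neq 2$, i.e., that no two-tile matching can be a conflict.

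So suppose for contradiction that $C = \{T_1, T_2\}$ is a conflict, meaning $E_C$ contains a bad copy $K$ of $K_{2,r}$ for some $r \geq 1$ that uses at least one edge from each of $T_1$ and $T_2$. Recall that each tile $T_i$ corresponds to a monochromatic copy $S_{T_i}$ of $K_{t+1,t+1} - (t+1)K_2$ in color $c_i \in N_1$, and because $C$ is a matching in $\cH_1$, the two tiles use distinct colors $c_1 \neq c_2$ (indeed, the vertex-copies $v^{c_i} \in V_{c_i}$ appearing in each tile force $c_1 \neq c_2$ whenever the tiles share a vertex of $K_{n,n}$, and if they share no vertex at all there is nothing to analyze since then no single copy of $K_{2,r}$ can meet both). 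First I would split the edges of $K$ according to which tile they come from: let $K'$ be the subgraph of $K$ formed by edges taken from $S_{T_1}$ (all colored $c_1$) and $K''$ the subgraph formed by edges from $S_{T_2}$ (all colored $c_2$); since $K$ uses an edge from each tile, both $K'$ and $K''$ are nonempty, and since $K$ has only two colors total, $K' \cup K''$ covers all of $E(K)$. Now the point is that $K'$, being entirely in color $c_1$, is a collection of stars centered at the two hub vertices of $K$ (the side of size $2$), and likewise for $K''$; in particular each of $K'$, $K''$ is a disjoint union of $K_{2,s}$-type pieces sitting on the same hub pair, of total order $\geq 1$.

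The key step is then a parity argument. Since $K$ is bad, every color class of $K$ has even size; but $K$ has exactly two color classes, namely $E(K')$ with $|E(K')|$ even and $E(K'')$ with $|E(K'')|$ even. On the other hand $E(K') \cup E(K'') = E(K)$ with $e(K) = 2r$, consistent so far — so I need to push harder: I would argue that $K'$ itself is then a bad copy (each of its color classes, which is all of $E(K')$, is even, and $K'$ is a disjoint union of $K_{2,s_i}$'s on a common hub pair) and similarly $K''$ is bad. Re-examining the definition of reducibility, one sees that if $K$ decomposes into at least two bad pieces on the same size-$2$ side with strictly smaller bottom sides, then $K$ would be reducible — but actually we only assumed $K$ bad, not irreducible, so this is allowed. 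The genuine obstruction to $|C| = 2$ must instead come from the fact that $K'$ alone already has an odd/even structure forcing each tile to individually contain a smaller bad copy, contradicting inclusion-minimality once we recall $\cC$ consists of inclusion-minimal \emph{irreducible} conflicts — wait, the claim is about all conflicts, not just those in $\cC$.

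Let me restate the clean argument I would actually write: if $C=\{T_1,T_2\}$ with $K$ bad using an edge from each, write $K = K' \sqcup K''$ as above with $K'$ all color $c_1$ and $K''$ all color $c_2$, both nonempty. Each color class of $K$ is even since $K$ is bad, so $|E(K')|$ and $|E(K'')|$ are both even and hence both $\geq 2$. But then $K'$ is itself a bad copy of some $K_{2,r'}$ (possibly disconnected, i.e. a union of such on the common hub pair, with $r' \geq 1$) — indeed its unique color class $E(K')$ has even size. Since $K'$ uses at least one edge from $T_1$ and no edges from $T_2$, the single-tile set $\{T_1\}$ already "captures" a bad copy; but a conflict is required to have $j \geq 2$ tiles all of which are used by the bad copy $K$. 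The genuine point: $K'$ lives entirely inside $S_{T_1}$, so it is a bad copy of $K_{2,r'}$ using edges from only the one tile $T_1$. This means $\{T_1\}$ would need to be a conflict to derive a contradiction, but conflicts need size $\geq 2$ — so this does not immediately contradict anything. The resolution I expect: the definition forces that for $\{T_1, T_2\}$ to be a minimal/valid conflict, the bad copy $K$ cannot be split this way, i.e. a copy of $K_{2,r}$ all of whose edges lie in one of two monochromatic color classes, each even, is itself the union of two bad copies and therefore \emph{reducible}, and the authors presumably only count irreducible ones — but the claim as stated concerns all conflicts. I would therefore check the precise definition once more and argue: a bad $K$ using edges from exactly two tiles has its edge set partitioned into two even monochromatic classes, so $K$ contains the bad copy $K'$ lying in a single tile, whence (chasing the "uses at least one edge from each tile" requirement together with $K'$ being bad and using only $T_1$) we can replace $K$ by $K'$ and $C$ by $\{T_1\}$, violating $|C| \geq 2$ for the \emph{smallest} conflict containing this witness.

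\medskip

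\noindent\textbf{What I would actually carry out, in order:}

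\begin{enumerate}
\item Observe $|C| \geq 2$ directly from the definition (a conflict is a matching with $j \geq 2$ tiles), so it remains to exclude $|C| = 2$.
\item Suppose $C = \{T_1, T_2\}$ is a conflict with witnessing bad copy $K \subseteq E_C$ of $K_{2,r}$ using at least one edge from each tile. Note the two tiles carry distinct colors $c_1 \neq c_2$, so the $2$-coloring of $K$ induced by $c_1, c_2$ has both color classes nonempty.
\item Since $K$ is bad, both color classes of $K$ have even size; in particular the color-$c_1$ subgraph $K' \subseteq S_{T_1}$ (a union of $K_{2,s}$'s on the hub pair of $K$) has even, nonzero edge set, hence is itself a bad copy of $K_{2,r'}$ for some $1 \leq r' < r$, and $K'$ uses edges only from $T_1$.
\item Conclude that $K$ is \emph{reducible} — it splits into the two bad copies $K'$ (color $c_1$) and $K''$ (color $c_2$) on the same size-$2$ side with $r', r'' < r$ — so any conflict witnessed by $K$ decomposes. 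Running this observation down to an inclusion-minimal witness shows the witnessing bad copy can be taken inside a single tile, forcing $|C| = 1$, contradicting step 1. Hence $|C| \geq 3$.
\end{enumerate}

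\medskip

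\noindent\textbf{Expected main obstacle.} The crux is making the reduction in step 3--4 airtight: one must verify that the color-$c_1$ part $K'$ of a bad $K_{2,r}$ really does have \emph{even} edge count (this uses that $K$ has exactly two colors, each even, and $E(K') \cup E(K'')$ is a disjoint cover of $E(K)$, so $e(K') = e(K) - e(K'')$ with $e(K)$ and $e(K'')$ both even), and that a bad copy living in a single tile prevents $\{T_1, T_2\}$ from being a \emph{minimal} conflict — equivalently, one must track carefully the requirement that the witnessing bad copy "uses at least one edge from each tile in $C$" and show it fails once we pass to $K'$. The bookkeeping around disconnected copies of $K_{2,r}$ (unions of stars on a fixed hub pair) and the definition of reducibility is where I expect the details to concentrate.
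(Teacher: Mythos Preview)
Your argument has a genuine gap in the case $c_1 \neq c_2$. In step 3 you assert that the color-$c_1$ part $K' \subseteq S_{T_1}$ is itself a bad copy of $K_{2,r'}$ on the same hub pair, but this is false in general: take the prototypical bad $K_{2,2}$ with $x_1y_1, x_1y_2$ colored $c_1$ and $x_2y_1, x_2y_2$ colored $c_2$. Here $K'=\{x_1y_1, x_1y_2\}$ is a star at $x_1$, not a $K_{2,r'}$ with hub $\{x_1,x_2\}$, and one checks directly that this $K$ is \emph{irreducible} in the paper's sense. Your parity observation (that $|E(K')|$ is even) is correct, but evenness does not force $K'$ to be complete bipartite on the hub. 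Moreover, even if step 3 held, step 4 would still not close the argument: the claim is about \emph{all} conflicts, not just irreducible or inclusion-minimal ones, so exhibiting a bad sub-copy living in a single tile does nothing to stop $\{T_1,T_2\}$ from being a conflict witnessed by the original $K$.

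The ingredient you are missing is the \emph{non-edge} structure of tiles: each $e_{S,i}\in\cH_1$ contains, besides the edges of $S$ and the vertex-copies $v^i$, every pair $xx'\in\binom{V(S)\cap X}{2}$ and $yy'\in\binom{V(S)\cap Y}{2}$. This is what actually rules out size-$2$ conflicts. In the example above, $y_1,y_2$ lie in both $V(S_{T_1})$ and $V(S_{T_2})$, so the non-edge $y_1y_2$ belongs to $T_1\cap T_2$, contradicting that $\{T_1,T_2\}$ is a matching. The paper's proof runs exactly along these lines: when $c_1\neq c_2$, the non-edge $x_1x_2$ forces one of the tiles, say $T_1$, to meet $\{x_1,x_2\}$ in a single vertex $x_1$, so the $c_1$-colored edges of $K$ form a star at $x_1$; evenness of that color class gives at least two leaves $y_i,y_j$, whence $x_2y_i,x_2y_j$ are colored $c_2$, placing $y_i,y_j$ in $V(S_{T_2})$ as well, and the non-edge $y_iy_j\in T_1\cap T_2$ delivers the contradiction. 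Your handling of the $c_1=c_2$ case is essentially correct, but you never invoke non-edges, and without them the $c_1\neq c_2$ case cannot be finished.
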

\begin{proof}
Suppose, for contradiction, there exists a conflict $C$ with respect to $\mathcal{H}_1$ with $|C|\leq 2$. By definition $|C|\geq 2$, and hence we assume $C = \{T_1,T_2\}$ where $T_1$ and $T_2$ are tiles in $\mathcal{H}_1$. Since $C$ is a conflict, $E_C = E(S_{T_1})\cup E(S_{T_2})$ must contain a bad copy $K$ of $K_{2,r}$ for some $r\geq 1$, where $S_{T_i}$ is the copy of the graph $K_{t+1,t+1}-(t+1)K_2$ corresponding to $T_i$, so that $K$ uses at least one edge from each $T_i$. For $i=1,2$, let $E_K^i=E(K)\cap E(S_{T_i})$ and  $V_K^i=V(K)\cap V(S_{T_i})$. Since $C$ is a matching in $\mathcal{H}_1$, $E_K^1\cap E_K^2=\emptyset$. Let $c_i$ be the color of the edges of $S_{T_i}$.

If $c_1 = c_2=c$, then $V_K^1\cap V_K^2=\emptyset$ since otherwise, for  $v\in V_K^1\cap V_K^2$ we have $v^c\in T_1\cap T_2$,  contradicting the fact that $C$ is a matching. But since there is no way to decompose a connected graph into two vertex-disjoint nonempty subgraphs, we must have $V_K^1\cap V_K^2\neq \emptyset$, a contradiction.

So assume $c_1\neq c_2$. Let $\{y_1,\dots,y_r\}$ be the vertices of $K$ in the vertex side of size $r$ and let $\{x_1,x_2\}$ be the vertices in vertex side of size $2$. Since $C$ is a matching in $\mathcal{H}_1$ and $E_K^i \neq \emptyset$, for some $i\in \{1,2\}$ we must have $|\{x_1,x_2\}\cap V_{K}^i|=1$. Without loss of generality, let $\{x_1,x_2\}\cap V_{K}^1 = \{x_1\}$, that is, $E_K^1$ induces a star centered at $x_1$ with at least two leaves, say $y_i$ and $y_j$ (note that $E_K^1$ must have at least two leaves, for otherwise $K$ has an odd color class;  this implies in particular $r\ge 2$). This entails $x_2y_i, x_2y_j \in E_K^2$. Thus, $y_iy_j\in T_1\cap T_2$, contradicting the fact that $C$ forms a matching.
\end{proof}

\begin{claim}\label{claim:confcontainirr}
    Every conflict $C$ contains a conflict $C'$ such that $C'\in \cC$.
\end{claim}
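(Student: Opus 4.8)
The plan is to show that any conflict $C$ either is already irreducible (in which case it contains an inclusion-minimal irreducible conflict, since it is a finite set) or can be ``reduced'' to a strictly smaller conflict, and then induct on $|C|$. So suppose $C = \{T_1, \dots, T_j\}$ is a conflict and $E_C$ contains a bad copy $K$ of $K_{2,r}$ using at least one edge from each tile in $C$. If $K$ is irreducible, then $C$ is an irreducible conflict by definition, and any inclusion-minimal subset of $C$ that is still an irreducible conflict lies in $\cC$; such a subset exists by finiteness. If instead $K$ is reducible, then by definition the edges of $K$ partition into bad copies $K_1, \dots, K_k$ (with $k \ge 2$) of $K_{2,r_1}, \dots, K_{2,r_k}$ with $r_i < r$, all sharing the size-$2$ side of $K$.

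The key step is then the following dichotomy for the pieces $K_1, \dots, K_k$. Each $K_m$ is a bad copy of $K_{2,r_m}$, and its edges all come from $E_C$; let $C_m \subseteq C$ be the (multi-)set of tiles that contribute an edge to $K_m$. If every edge of $K_m$ lies in a single tile $T \in C$ and moreover $C_m = \{T\}$ has size one, that is impossible: a single tile $S_T$ is monochromatic, so $K_m$ would have a single color class, which is odd since $K_m$ is a copy of $K_{2,r_m}$ with $r_m \ge 1$ (every color class nonempty, total nonzero) --- wait, more carefully, a monochromatic $K_{2,r_m}$ has its unique color class of size $2 r_m$, which is even only if we are told $K_m$ is bad; so $K_m$ bad and monochromatic forces $2r_m$ even, which is always true, contradiction with ``bad'' requiring no odd class only if $r_m$ could be... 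Let me instead use the cleaner observation: since $K_m$ is bad it uses at least two tiles, OR it uses one tile but then $|C_m| \ge 2$ is false --- so I will argue directly that $|C_m| \ge 2$ for each $m$ by the same reasoning as in Claim \ref{claim:conflictsize1}: a bad $K_{2,r_m}$ cannot come from a single monochromatic graph (its only color class would be all $2r_m$ edges, but then it would be a single class and \emph{is} that odd? $2r_m$ is even iff ... ). Rather than belabor parity, I note that if $K_m$ sits inside a single tile then $K_m$ is monochromatic with one nonempty color class of even size $2r_m$, contradicting nothing --- so the honest route is: pick $K_m$ to additionally use at least one edge from each tile in $C_m$ (discard tiles of $C$ contributing nothing to $K_m$), and observe $C_m$ is then a conflict with $|C_m| < |C|$ provided $C_m \subsetneq C$. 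The point is that since $k \ge 2$ and the $K_m$ partition $E(K)$, and $K$ uses an edge from every tile of $C$, each tile of $C$ contributes to at least one $K_m$, but no single $K_m$ can use edges from \emph{all} of $C$: if some $K_m$ used an edge from every $T_i$, then since the $K_{m'}$ with $m' \ne m$ are nonempty and partition the remaining edges, those edges also come from tiles in $C$, and any such edge lies in a tile $T_i$ already used by $K_m$ --- this forces, exactly as in Claim \ref{claim:conflictsize1}, a common vertex or non-edge between two tiles of the matching $C$, a contradiction. Hence $C_m \subsetneq C$, so $C_m$ is a conflict with fewer tiles, and by induction $C_m$ contains some $C' \in \cC$, which is also a subset of $C$.

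The main obstacle I anticipate is making precise the claim ``no single piece $K_m$ uses an edge from every tile of $C$'': this is where the matching structure of $C$ in $\cH_1$ is essential, and the argument is a repackaging of the vertex/non-edge collision argument in Claim \ref{claim:conflictsize1}. Specifically, if $K_m$ met every tile but $K \ne K_m$, take another piece $K_{m'}$ and an edge $e \in E(K_{m'})$; then $e \in E(S_{T_i})$ for some $i$, while $K_m$ also uses an edge of $S_{T_i}$, and since $K_m$ and $K_{m'}$ share the same size-$2$ side and are both subgraphs of $K$, one checks (distinguishing whether $T_i$ and the tile hosting the rest of $K_m$'s $T_i$-edge coincide, and tracking the color-vertices $v^{c}$ and the non-edges $y y'$) that two distinct tiles of $C$ share a vertex of $\cH_1$, contradicting that $C$ is a matching. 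Once this is in hand, the induction closes cleanly. I would streamline by doing the induction on $|C|$ from the start: base case $|C| = 3$ (the minimum by Claim \ref{claim:conflictsize1}), where either $C$ is irreducible --- hence in $\cC$ after passing to an inclusion-minimal irreducible sub-conflict, which must be $C$ itself or smaller, but smaller contradicts Claim \ref{claim:conflictsize1} unless it equals $C$ --- or $C$ reduces to a strictly smaller conflict, again impossible by Claim \ref{claim:conflictsize1}, so $C \in \cC$; and the inductive step proceeds as above.
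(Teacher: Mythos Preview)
Your proposal has a genuine gap. The central claim---that no single piece $K_m$ can use edges from every tile of $C$---is false, and the argument you sketch for it does not work. You write that if $K_m$ meets every $T_i$ and some other piece $K_{m'}$ contributes an edge lying in $T_i$, then ``two distinct tiles of $C$ share a vertex of $\cH_1$''. But this is one and the same tile $T_i$ contributing to both $K_m$ and $K_{m'}$; there is no second tile in sight, and no contradiction with $C$ being a matching. Concretely (say $t\ge 3$): one can arrange three tiles $T_1,T_2,T_3$ and a reducible bad $K_{2,3}$ on $\{x_1,x_2\}\times\{y_1,y_2,y_3\}$ so that the piece $K_1$ on $y_1,y_2$ uses all three tiles while the piece $K_2$ on $y_3$ is a monochromatic $K_{2,1}$ entirely inside $T_1$. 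Then $C_1=C$ and $C_2=\{T_1\}$, so neither piece gives a strictly smaller conflict and your induction on $|C|$ stalls. (A monochromatic $K_{2,r_m}$ inside a single tile \emph{is} bad---its unique colour class has even size $2r_m$---so $|C_m|=1$ really can occur; your attempt to rule this out was rightly abandoned.)

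The paper's proof repairs this with two changes. First, it refines the decomposition so that every piece $K_i$ is \emph{irreducible}. Second, and this is the missing idea, it observes that if some piece $K_i$ comes from a single tile $T$, then both $x_1,x_2\in V(S_T)$ and hence the non-edge $x_1x_2$ lies in $T$; since $C$ is a matching in $\cH_1$, at most one tile of $C$ can contain $x_1x_2$, so at most one index $i$ has $|C^i|=1$. As $k\ge 2$, some $C^i$ has $|C^i|\ge 2$, and because $K_i$ is irreducible this $C^i$ is itself an irreducible conflict, hence contains an element of $\cC$. Your argument could also be salvaged by inducting on $r$ (replace $K$ by $K_1$ when $C_1=C$) rather than on $|C|$, but as written the key step fails.
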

\begin{proof}
Let $C=\{T_1,\dots, T_j\}$ be a conflict.  
If $C$ is irreducible, then since $\mathcal{C}$ contains all inclusion-minimal irreducible conflict, there must exist a conflict $C'\in \mathcal{C}$ such that $C'\subseteq C$. 

Assume for contradiction that there exists a reducible conflict $C$ that does not contain a conflict $C'$ with $C'\in \mathcal{C}$. Choose the smallest such $C=\{T_1,\dots, T_j\}$. By Claim \ref{claim:conflictsize1}, $j\ge 3$. Since $C$ is a conflict, there exists a bad copy $K$ of $K_{2,r}$, for some $r\geq 1$, so that $K$ uses at least one edge from each tile in $C$. Since $C$ is reducible $K$ must be  reducible,  and thus  $r\ge 2$ and  $K$ can be decomposed into bad copies $K_1,\dots ,K_k$ of $K_{2,r_1}, \dots,K_{2,r_k}$, respectively, for some $k\geq 2$, each with the same vertex side of size $2$ as $K$, say $\{x_1,x_2\}$. Moreover, we can choose such a $k$ so that each $K_i$ is irreducible for all $1\le i\le k$. Let $C^i=\{T_1^i,\dots,T_{\ell_i}^i\}$ be the tiles of $C$ that have at least one edge in $K_i$. 

Note that there is at most one $i\in [k]$ with $|C^i|=1$. To see this, observe that if $C^i=\{T^i_1\}$, then $x_1x_2\in T^i_1$. Thus, if $|C^i|=|C^j|=1$ for distinct $i,j\in [k]$, then $x_1x_2\in T^i_1\cap T^j_1$ contradicting that $C$ is a matching. Now consider any $C^i$ with $|C^i|>1$. Since $K_i$ is irreducible and $|C^i|\geq 2$, $C^i$ is an irreducible conflict. Thus, there exists some $C'\subseteq C^i\subseteq C$ with $C'\in \mathcal{C}$. 
\end{proof}

\begin{claim}\label{claim: matching H1 is a good coloring}
If $M$ is a matching in $\cH_1$ and $M$ contains no conflict of $\cC$ as a subset, then the edge-coloring of $K_{n,n}$ corresponding to $M$ has no bad copies of $K_{2,t}$. 
\end{claim}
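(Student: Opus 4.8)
The plan is to prove the contrapositive: if the coloring of $K_{n,n}$ corresponding to $M$ has a bad copy of $K_{2,t}$, then $M$ contains a conflict from $\cC$ as a subset. So suppose $K$ is a bad copy of $K_{2,t}$ in the coloring associated to the matching $M$ in $\cH_1$. Each edge of $K$ receives a color from $N_1$, and each such colored edge lies in some monochromatic copy $S_T$ of $K_{t+1,t+1}-(t+1)K_2$ for a tile $T \in M$; indeed, by the way the coloring is read off from $M$, every edge of $K_{n,n}$ colored with a color in $N_1$ belongs to exactly one tile of $M$ (these tiles are edge-disjoint because $M$ is a matching, noting that each edge $xy$ of $K_{n,n}$ is a vertex of $\cH_1$).

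Next I would let $C \subseteq M$ be the set of tiles $T$ such that $S_T$ shares at least one edge with $K$. Since $K$ has $2t \geq 2$ edges, $C$ is nonempty; in fact $|C| \geq 2$, because a single tile $T$ gives a monochromatic $S_T$, and $K$ cannot be monochromatic (a monochromatic $K_{2,t}$ has every color class of size $2t$ or, restricted per vertex, $t$ — in any case even — so it would have no odd color class only if... ) — more carefully, if $K$ were entirely contained in one $S_T$ then $K$ would be monochromatic, but a monochromatic $K_{2,t}$ has a single color class of size $2t$, which is even, hence $K$ would indeed be bad; so I need to rule this out differently: a monochromatic copy of $K_{2,t}$ cannot sit inside $S_T = K_{t+1,t+1} - (t+1)K_2$ because in $S_T$ each vertex has degree exactly $t$, and a $K_{2,t}$ needs two vertices of degree $t$ on the small side all adjacent to a common set of $t$ vertices, but those $t$ vertices would then each be adjacent to both — this is possible in $K_{t+1,t+1}-(t+1)K_2$ only if the removed matching avoids all these edges, which forces the small side $\{x_1,x_2\}$ and the large side $\{y_1,\dots,y_t\}$ to be $2t$ distinct vertices among $t+1+t+1$ with the matching edges all disjoint from the $K_{2,t}$; this is actually possible. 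So instead I will simply take $C \subseteq M$ to be the tiles meeting $K$ and note that $C$ is a conflict as soon as $|C| \geq 2$; the case $|C| = 1$ means $K \subseteq S_T$, but then $K$ is monochromatic with one even color class — wait, then $K$ IS bad, contradiction with needing $|C|\ge 2$. I'll handle this honestly in the writeup: if $|C|=1$, $K$ is a monochromatic bad $K_{2,t}$ lying in a single $S_T$; but Claim~\ref{claim:conflictsize1}'s proof technique or a direct degree count should show this forces a contradiction, OR we accept it and observe such a monochromatic $K_{2,t}$ cannot arise because... Actually the cleanest fix: the hypergraph $\cH_1$ was built from copies $S$ of $K_{t+1,t+1}-(t+1)K_2$, and a monochromatic $K_{2,t}$ inside such an $S$ would itself need its own ``witnessing'' structure — I will argue via the non-edge vertices: if $x_1, x_2$ are the degree-2 side of $K$ all joined to $y_1,\dots,y_t$, and all these edges are color $i$, then the tile $T$ contains $y_ay_b$ as a non-edge-vertex for all $a \ne b$, which is fine, so there is no contradiction from the matching alone — meaning a single tile CAN witness a bad monochromatic $K_{2,t}$, and in that degenerate case $C = \{T\}$ has size $1$ and is not a conflict. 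I will therefore need to rule out monochromatic bad $K_{2,t}$'s separately, or — better — redefine: note $t \geq 2$, and in $S_T$, any two vertices on the same side that are both adjacent to a common set of $t$ vertices must be such that... hmm. I'll resolve this by invoking that the intended reading is $t\ge 2$ and a monochromatic $K_{2,t}$ forces, inside $K_{t+1,t+1}-(t+1)K_2$, two vertices $x_1,x_2$ on one side each adjacent to the same $t$ vertices $y_1,\dots,y_t$ on the other; since each $y_j$ has degree exactly $t$ in $S_T$ and the other side has size $t+1$, $y_j$ is adjacent to all of the $x$-side except its matched partner; so $y_j$ adjacent to both $x_1,x_2$ for all $j$ means neither $x_1$ nor $x_2$ is the matched partner of any $y_j$, i.e. $x_1,x_2$ are matched to the two $y$-vertices outside $\{y_1,\dots,y_t\}$ — but there is only ONE such vertex (the side has size $t+1$), contradiction. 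Great, so $|C| \geq 2$ after all, and this is the key local computation.

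Once $|C| \geq 2$ is established, $C$ is by definition a conflict with respect to $\cH_1$, since $E_C$ contains the bad copy $K$ of $K_{2,t}$ which uses at least one edge from every tile in $C$ (that is exactly how $C$ was chosen). By Claim~\ref{claim:confcontainirr}, every conflict contains some $C' \in \cC$. Since $C \subseteq M$, we get $C' \subseteq M$ with $C' \in \cC$, contradicting the hypothesis that $M$ contains no conflict of $\cC$ as a subset. This completes the contrapositive. The main obstacle, as flagged above, is the degenerate single-tile case: making sure a bad copy of $K_{2,t}$ genuinely spans at least two tiles of $M$. The resolution is the pigeonhole-on-the-deleted-matching argument — in $K_{t+1,t+1}-(t+1)K_2$ there is exactly one vertex on each side that is the matched partner of a given vertex, so two vertices of the small side of a monochromatic $K_{2,t}$ cannot both avoid being matched into the complement of the large side when that complement has size only one. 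Everything else is bookkeeping about how the coloring is decoded from the matching and a direct appeal to Claim~\ref{claim:confcontainirr}.
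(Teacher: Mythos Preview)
Your proof is correct and follows essentially the same route as the paper: let $C\subseteq M$ be the tiles meeting the bad $K_{2,t}$, argue $|C|\ge 2$, conclude $C$ is a conflict, and invoke Claim~\ref{claim:confcontainirr} to extract some $C'\in\cC$ inside $M$. The only difference is that the paper dispatches the $|C|\ge 2$ step in one line by asserting that $K_{t+1,t+1}-(t+1)K_2$ does not contain $K_{2,t}$ as a subgraph, whereas you (after some detours) supply the pigeonhole argument: the matched partners of the two vertices $x_1,x_2$ on the small side must both lie in the single vertex of $Y\setminus\{y_1,\dots,y_t\}$, which is impossible.
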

\begin{proof}
Let $M = \{T_1,\dots,T_m\}$ be a matching in $\mathcal{H}_1$ that contains no conflict of $\mathcal{C}$ as a subset, and suppose for contradiction that the edge-coloring of $K_{n,n}$ corresponding to $M$ has a bad copy $K$ of $K_{2,t}$. 
Note that the colored edges of $K$ correspond to at least $2$ distinct tiles in $M$, since the graph $K_{t+1,t+1}-(t+1)K_2$ does not contain $K_{2,t}$ as a subgraph.  Let  $C=\{T_1,\dots,T_j\}$ be the tiles of $M$ such that $E(K)\cap E(S_{T_i}) \neq \emptyset$. Then
$j\geq 2$. Thus, by definition, $C$ is a conflict. Now by Claim \ref{claim:confcontainirr}, $C$ must contain a conflict $C'$ such that $C'\in \mathcal{C}$, a contradiction to the assumption on $M$.
\end{proof}

We now wish to check that Conditions \ref{C1}-\ref{C3} hold for $\cC$.  First, note that \ref{C1} holds since, by Claim \ref{claim:conflictsize1} and the definition, $3 \leq |C| \leq 2t$. 
In the following two subsections, we prove conditions \ref{C2} and \ref{C3}. It will be easier to start with \ref{C3}.

\subsubsection{Proving condition \ref{C3}}

Given a conflict $C\in \mathcal{C}$, we arbitrarily choose and fix an irreducible bad copy $G_C$ of $K_{2,r}$ so that $G_C$ has at least one edge from every tile in $C$ and where $2\leq r\leq t$. Note $r\geq 2$ since by claim \cref{claim:conflictsize1}, $|C|\geq 3$ and hence the $K_{2,r}$ needs at least $3$ edges. Also note that there is a constant number of ways to choose $G_C$, and this constant depends only on $t$ and $j\le t$.
If $C' \subset C$, we denote by $G_{C'}$ the subgraph of $G_C$ whose edges correspond to $C'$.

Recall that $\mathcal{C}^{(j)}$ denotes the set of conflicts in $\mathcal{C}$ of size $j$.
Let $2\leq j'\leq j-1$ and let $C'=\{T_1,...,T_{j'}\}$ be a set of fixed tiles in $\cH_1$ that form a matching. We want to compute an upper bound for the number of conflicts $C\in \mathcal{C}^{(j)}$ with $C'\subset C$.  For every conflict $C$ having $C'$ as a subset, we have fixed a bad copy $G_C$ and a subgraph $G_{C'}$ of $G_C$.  
We will proceed as follows:  let $\mathcal{E}=\{E_1, \dots, E_j\}$ be a partition of $E(G_{C})$ such that each part $E_i$ corresponds to a  distinct tile of $C$, and for all $i\in [j]$, $E_i\in \mathcal{E}$ corresponds to the tile $T_i \in C'$. Note that there is a constant number of such partitions, and this constant depends only on $t$.  We will find an adequate ordering $E_{j'+1},...,E_{j}$ on $\mathcal{E}\setminus \{E_1,...,E_{j'}\}$ and add the edges in $E_{j'+1}\cup ...\cup E_{j}$ to $G_{C'}$ in that order, while at each step adding the corresponding tile $T_i$ to $C'$  and updating the new fixed tiles $C'=C'\cup \{T_i\}$ and the corresponding new $G_{C'}= G_{C'\cup \{T_i\}}$. We will continue this until we have added all the edges of $E_{j'+1}\cup ...\cup E_{j}$ to $G_{C'}$ and an irreducible $K_{2,r}$ is built. 

Since at each step there is only a constant number of ways to choose $G_{C'}$ from the corresponding edges of the currently fixed tiles $C'$, the codegree of our initial $j'$ tiles will be bounded by some constant times the number of ways to complete $E_{C'}$  to an irreducible bad copy of $K_{2,r}$, where the constant depends only on $t$ and on $j-j' \le t$.

For a set of edges $E$, let $V(E)$ be the set of all vertices of the edges in $E$.

Let $\{y_1,...,y_s\}$ be vertices of $G_{C'}$ in the vertex side that will  be completed to $r$ vertices,  where $s\leq r$. Without loss of generality, $\{y_1,...,y_s\} \subset Y$. Let $X(G_{C'}) = X\cap V(G_{C'})$. Then $X(G_{C'})$ contains either one or two vertices. 

\begin{claim}\label{lemma:first} If $|X(G_{C'})|=2$ and $j-j'\geq 2$ then there exists  $E\in \mathcal{E}\setminus \{E_1,...,E_{j'}\}$ such that $E$ can be completed to a tile in $O(n^{2t+1})$ ways. If $j-j'=1$, then $\{E_j\} = \mathcal{E}\setminus \{E_1,...,E_{j-1}\}$ can be completed to a tile in $O(n^{2t})$ ways.  
\end{claim}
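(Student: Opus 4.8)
The plan is to reduce the claim to a routine count of tiles containing a prescribed edge set, and then to exploit the structure of irreducible bad copies of $K_{2,r}$ to locate a part that is cheap to complete. Throughout, the side of size $2$ of $G_C$ lies in $X$; I write $x_1,x_2$ for its two vertices (the \emph{hubs}), so every edge of $G_C$ meets a hub. The count I would use is the following: if $F$ is a nonempty set of edges of $K_{n,n}$ spanning $v_X\le t+1$ vertices of $X$ and $v_Y\le t+1$ of $Y$, and $p$ endpoints of $F$ are already fixed (their identity in $K_{n,n}$ being determined by $G_{C'}$ and the parts placed earlier in the ordering), then the number of tiles $T\in\cH_1$ with $F\subseteq E(S_T)$ is $O(n^{2(t+1)-p+1})$: one chooses the $(v_X{+}v_Y){-}p$ not-yet-fixed endpoints of $F$, the $t{+}1{-}v_X$ and $t{+}1{-}v_Y$ remaining vertices of $S_T$, one of at most $(t{+}1)!$ matchings to delete, and one of the $O(n)$ colours. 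This improves to $O(n^{2(t+1)-p})$ when the colour of $T$ is \emph{forced}, that is, pinned down by the parity condition defining badness (every colour occurs an even number of times in $G_C$) together with the colours of the already-placed tiles and the fixed record of which tiles of $C$ repeat a colour. Hence a part $E$ completes in $O(n^{2t+1})$ ways once it meets at least two fixed vertices, or meets at least one fixed vertex and has a forced colour; and in $O(n^{2t})$ ways once it meets at least three fixed vertices, or meets at least two and has a forced colour.

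For the first assertion ($|X(G_{C'})|=2$, so both hubs are fixed, and $j-j'\ge 2$) I would argue by cases. If some remaining part meets an already-fixed leaf, or meets both hubs, it meets at least two fixed vertices and we are done. Otherwise every remaining part is a star centred at a single hub over not-yet-fixed leaves; if any such part has a forced colour, it meets its hub and we are done. In the remaining case all remaining parts have colours not used by the already-placed tiles. Let $U$ and $L_P$ be the sets of not-yet-fixed and of fixed leaves; both are nonempty ($L_P$ since the already-placed parts are nonempty and their edges lie at leaves, $U$ since a remaining part is nonempty yet meets no fixed leaf). Every edge of $G_C$ at a leaf of $U$ then lies in a remaining part and every edge at a leaf of $L_P$ in an already-placed part, so $E(G_C)$ is the disjoint union of the edges on $L_P\cup\{x_1,x_2\}$ and the edges on $U\cup\{x_1,x_2\}$, and by the colour hypothesis these two edge sets use disjoint colour sets. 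Since $G_C$ is bad, each colour is then even within the unique side where it appears, so $G_C$ is the union of two bad copies of $K_{2,|L_P|}$ and $K_{2,|U|}$ on the hubs $\{x_1,x_2\}$ with $1\le|L_P|,|U|<r$: this exhibits $G_C$ as reducible, contradicting its choice as an irreducible bad copy. (The bookkeeping stays consistent because same-coloured tiles of the matching $C$ are vertex-disjoint in $\cH_1$, hence induce vertex-disjoint subgraphs of $G_C$ and can only be incident to opposite hubs.)

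For the second assertion ($j-j'=1$) the single remaining part $E_j$ is completed after all other $j{-}1$ tiles of $C$, and $r\ge 2$ since $|C|\ge 3$. If $E_j$ has two distinct edges, then for each leaf $y$ it meets the other edge at $y$ lies in an earlier part — otherwise both edges at $y$ carry the single colour of $E_j$, making $\{y\}$ a bad $K_{2,1}$ inside $G_C$ and contradicting irreducibility — so all endpoints of $E_j$ are fixed and $E_j$ spans at least three of them, giving $O(n^{2t})$ completions. If instead $E_j$ is a single edge $xy$, then its colour is not new (a new colour would occur exactly once in $G_C$, forcing an odd class), hence forced; both $x$ (a hub) and $y$ (whose other edge lies in an earlier part) are fixed, so $E_j$ meets two fixed vertices with a forced colour and again completes in $O(n^{2t})$ ways.

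The step I expect to be the main obstacle is the colour bookkeeping in the first assertion: organising the case split so that the "forced colour" alternative is handled cleanly, and in particular ruling out the last, entirely free-colour configuration by simultaneously invoking the irreducibility of $G_C$ and the vertex-disjointness of same-coloured tiles of $C$. Once the right part is identified the counting is routine.
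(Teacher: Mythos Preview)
Your argument is correct. For the first assertion ($j-j'\ge 2$) your organisation differs only cosmetically from the paper's: the paper sets up the dichotomy ``either $G_{C'}$ has an odd colour class, or some unfixed part meets two fixed vertices'' and, to establish it, shows that if $G_{C'}$ has no odd colour class then it must have a leaf (otherwise $G_{C'}=K_{2,s}$ and its complement in $G_C$ are both bad, contradicting irreducibility), whence the other edge at that leaf furnishes the desired part. Your residual case---all remaining parts are single-hub stars over unfixed leaves with fresh colours---is exactly the configuration the paper rules out, and your leaf-partition argument reaches the same reducibility contradiction from a slightly different angle.

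For the second assertion ($j-j'=1$) your route is genuinely different and cleaner. The paper performs a degree-sum analysis on $G_{C'}$, splits on the number of odd-degree vertices, and then into subcases (i)--(iii) according to which vertices have odd degree, handling each separately. You instead split directly on $|E_j|$: if $|E_j|\ge 2$, irreducibility forbids $E_j$ from containing both edges at any leaf (else that monochromatic $K_{2,1}$ together with the bad $K_{2,r-1}$ complement witnesses reducibility), so every leaf $E_j$ touches is already fixed and $E_j$ spans at least three fixed vertices; if $|E_j|=1$, the single edge has both endpoints fixed and its colour is forced by parity. This avoids the parity bookkeeping entirely and gets to the $O(n^{2t})$ bound in two short strokes.
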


\begin{proof}
Suppose $X(G_{C'})=\{x_1,x_2\}$. 
First, assume $j-j'\geq 2$.
We claim that at least one of the following two possibilities occurs:
\begin{enumerate}
    \item[(1)] either there exists an odd color class in $G_{C'}$, or
    \item[(2)] there exists  $E\in \mathcal{E}\setminus \{E_1,...,E_{j'}\}$ such that $|V(E) \cap  V(E_1\cup...\cup E_{j'})| \ge 2$.
\end{enumerate}
Indeed, suppose there is no odd color class in $G_{C'}$. Observe that in this case $s\ge 2$ because if $s=1$ then  $G_{C'}$ is a $K_{1,2}$, and since $j'\geq 2$, $|E_1|=|E_2|=1$ and both of these edges have the same color, a contradiction to the fact that ${T_1,T_2}$ is a matching.

Moreover, $G_{C'}$ must contain a leaf $v \in \{y_1, \dots, y_s\}$, since otherwise $G_{C'}$ and $G_C - G_{C'}$ are  bad copies of $K_{2,s}$ and $K_{2,r-s}$, respectively, contradicting the fact that  $G_C$ is irreducible. Without loss of generality, let $v = y_1$ with neighbor $x_1$ in $G_{C'}$. Let $E\in \mathcal{E}\setminus \{E_1,...,E_{j'}\}$ such that $x_2y_1\in E$. Thus, we achieve (2).

Now, if (1) occurs, then there must be some $E\in \mathcal{E}\setminus \{E_1,...,E_{j'}\}$ such that the color of $E$  is present in $G_{C'}$. Since $X(G_{C'})=\{x_1,x_2\}$, $E$ has its color and at least one vertex fixed from $G_{C'}$. Thus, there are $O(n^{2t+1})$ ways to complete $E$ to a tile (we have to choose $2t+1$ more vertices). If (2) occurs, then $E\in \mathcal{E}\setminus \{E_1,...,E_{j'}\}$ with at least two endpoints present in $G_{C'}$ has at least two vertices fixed from $G_{C'}$ and again there are $O(n^{2t+1})$ ways to complete $E$ to a tile (we have to choose $2t-2$ vertices and a color).

Finally, suppose $j-j' =1$. By the degree-sum, there are an even number of odd degree vertices in $G_{C'}$. Moreover, any such vertex of odd degree in $G_{C'}$ must be in $V(E_j)$ since its degree is even in $G_{C}$. If there are at least $4$ odd degree vertices in $G_{C'}$, then $|V(E_j)\cap V(E_1 \cup \cdots \cup E_{j-1})|\geq 4$ and there are $O(n^{2t})$ ways to complete $E_j$ to a tile. If there are no odd degree vertices in $G_{C'}$, then $G_{C'}$ is isomorphic to $K_{2,i}$ for some $1 \leq i < r$. But then $G_C$ can be decomposed into $K_{2,i}$ and $K_{2,r-i}$, which share the same vertex part of size $2$, contradicting that $G_C$ is irreducible. Thus, we may assume there are exactly two odd degree vertices $u$ and $v$ in $G_{C'}$.

Up to symmetry and without loss of generality, there are $3$ possibilities for $u$ and $v$. Either (i) $u=x_1$ and $v=x_2$, (ii) $u= x_1$ and $v=y_1$, or (iii) $u=y_1$ and $v=y_2$. In case (i), since $x_1$ and $x_2$ are the only odd degree vertices in $G_{C'}$, $G_{C'}$ is again isomorphic to $K_{2,i}$ for some $1\leq i < r$, a contradiction. 

In cases (ii), it suffices to show that $|E_j|$ is odd since together with $|V(E_j)\cap V(E_1 \cup \cdots \cup E_{j-1})|=2$, this implies there are $O(n^{2t})$ ways to complete $E_j$ to a tile. To see this, observe that  $|E_j|= 2r-|E(G_{C'})| = 2r - \deg_{G_{C'}}(x_1)- \deg_{G_{C'}}(x_2)$ Since $\deg_{G_{C'}}(x_1)$ is odd degree and $\deg_{G_{C'}}(x_2)$ is even, we see that $|E_j|$ is odd.

Finally, assume we are in case (iii) where $y_1$ and $y_2$ are odd degree vertices in $G_{C'}$. This implies $y_1$ and $y_2$ are leaves in $G_{C'}$. Since \[\deg_{G_{C'}}(x_1)+\deg_{G_{C'}}(x_2) = \sum_{y_i \in V(G_{C'})\setminus \{x_1,x_2\}}\deg_{G_{C'}}(y_i) < 2r,\] $|\{x_1,x_2\}\cap V(E_j)|\geq 1$. Thus, $|V(E_j)\cap V(E_1\cup \cdots \cup E_{j-1})|\geq 3$ and there are $O(n^{2t})$ ways to complete $E_j$ to a tile. This completes the claim.
\end{proof}

When $|X(G_{C'})|=2$,  by iteratively applying \cref{lemma:first} after each unfixed tile is chosen, we get \[\Delta_{j'}(\mathcal{C}^{(j)}) = O\left(n^{(j-j'-1)(2t+1)+2t}\right) = O\left(n^{(2t+1)(j-j'-\frac{1}{2t+1})}\right).\]

which would satisfy \ref{C3} for any $\varepsilon\in \left(0,\frac{1}{2t+2} \right)$. We now have to consider the case when $|X(G_{C'})|=1$, say $X(G_{C'})=\{x_1\}$. Then $G_{C'}$ is a star centered at $x_1$.

It will be helpful to refer to tiles in $C$ whose corresponding colors are the same. To this end, if the color corresponding to $T_i$ is the same as the color corresponding to $T_j$, we call $T_i$ and $T_j$ \textit{partners}. Note that if $T_i\in C$ contributes an odd number of edges to the bad $K_{2,r}$, then by definition of being bad, $T_i$ must have a partner in $C$. Also, notice that no tile can have more than one partner since every tile with corresponding color $c$, must contain $x_1^c$ or $x_2^c$. Thus, if there is a tile with at least two partners, by the pigeonhole principle, there exist two tiles containing either $x_1^c$ or $x_2^c$, contradicting that the tiles form a matching in $\mathcal{H}_1$.

\begin{claim}\label{lemma: fixed star}
Let $G_{C'} \cong K_{1,s}$ be a star centered at $x_1$. Then there exists an ordering $E_{j'+1},\dots,E_{j}$ of $\mathcal{E}\setminus \{E_{1},\dots, E_{j'}\}$ such that one the following must occur:
\begin{enumerate}
    \item[(1)] There are $O(n^{2t+1})$ ways to complete $E_{j'+i}$ to a tile for all $1\leq i \leq j-j'-1$ and $O(n^{2t})$ ways to complete $E_{j}$ to a tile.
    \item[(2)] There are $O(n^{2t+2})$ ways to complete $E_{j'+1}$ to a tile, $O(n^{2t+1})$ ways to complete $E_{j'+i}$ to a tile for all $2\leq i \leq j-j'$, but moreover there is $O(n^{2t-1})$ ways to complete $E_q$ to a tile for some $q\in [j]\setminus [j']$.
    \item[(3)] There are $O(n^{2t+2})$ ways to complete $E_{j'+1}$ to a tile, $O(n^{2t+1})$ ways to complete $E_{j'+i}$ to a tile for all $2\leq i \leq j-j'$, but moreover there are $O(n^{2t})$ ways to complete $E_q$ to a tile for some $q\in [j-1]\setminus [j']$, and $O(n^{2t})$ ways to complete $E_{j}$ to a tile.
\end{enumerate} 
\end{claim}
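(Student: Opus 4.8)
The plan is to analyze the combinatorics of a monochromatic star $G_{C'}\cong K_{1,s}$ centered at $x_1$ and the ways the remaining unfixed edge-parts $E_{j'+1},\dots,E_j$ can be attached to build an irreducible bad $K_{2,r}$. The key structural observation is that since $G_{C'}$ uses only the vertex $x_1$ from the size-$2$ side, every unfixed part $E_i$ ($i>j'$) that contains edges of the final $K_{2,r}$ must, at some point, introduce the second vertex $x_2$ of the size-$2$ side; exactly one such part is the "first" to do so. I would let $E_{j'+1}$ be an edge-part that introduces $x_2$ (and possibly one more fixed vertex among the $y_i$'s it shares with $G_{C'}$); since $E_{j'+1}$ has its color free and at most one vertex fixed, there are $O(n^{2t+2})$ ways to complete it to a tile in the worst case — but if $E_{j'+1}$ also shares a leaf $y_i$ with $G_{C'}$, we are already in the better regime and get $O(n^{2t+1})$, which will let us route into outcome (1).

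Next I would process the parts $E_{j'+2},\dots,E_j$ greedily, exactly as in the proof of Claim \ref{lemma:first}: once $x_1$ and $x_2$ are both present among the fixed vertices, each subsequent part either shares $\ge 2$ vertices with what is already built (giving $O(n^{2t+1})$ since two vertices or a vertex-plus-color are fixed), or its color already appears (again $O(n^{2t+1})$), with the very last part $E_j$ forced to close up all remaining odd-degree vertices and hence sharing enough structure to be completed in $O(n^{2t})$ ways — this is the "last tile is cheap" phenomenon used throughout. The three outcomes in the statement then correspond to a book-keeping dichotomy on where the savings come from: outcome (1) is when $E_{j'+1}$ was already cheap ($O(n^{2t+1})$) so the final tile gives the usual $n^{2t}$ saving; outcome (2) is when $E_{j'+1}$ costs the full $n^{2t+2}$ but some later part $E_q$ is pinned down to $O(n^{2t-1})$ (three or more shared vertices, or two shared vertices plus a shared color); outcome (3) splits the compensating saving over two parts, each giving an $n^{2t}$ instead of $n^{2t+1}$. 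To locate the compensating saving I would use the partner structure: any tile contributing an odd number of edges to the bad $K_{2,r}$ must have a partner in $C$, no tile has two partners (else two tiles share $x_1^c$ or $x_2^c$), and irreducibility of $G_C$ forbids the configuration where $G_{C'}$ is itself a complete $K_{2,i}$ — so some part must both contribute a leaf attached to $x_1$ and an edge attached to $x_2$ at the same $y$-vertex, which forces an extra coincidence of vertices or colors in exactly one of the later parts.

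Concretely, I would set up the casework on the degree parities of the vertices $y_1,\dots,y_s$ of $G_{C'}$ after $x_2$ has been introduced, mirroring cases (i)–(iii) of Claim \ref{lemma:first}: if after adjoining $E_{j'+1}$ the built graph already has $x_1,x_2$ both present and at least one more shared $y$-vertex, iterate Claim \ref{lemma:first}–style estimates to reach (1); otherwise track the multiset of odd-degree vertices, noting each must be absorbed by some later $E_q$, and count how many vertices of $E_q$ are thereby forced — three forced vertices give $O(n^{2t-1})$ (outcome (2)), two forced vertices at two different parts give $O(n^{2t})+O(n^{2t})$ (outcome (3)). The main obstacle I anticipate is the bookkeeping in the single-partner case: showing that when $E_{j'+1}$ genuinely needs $n^{2t+2}$ choices, the irreducibility of $G_C$ and the partner constraints together \emph{guarantee} one of the two compensating savings and that they are not "double counted" against the $n^{2t+1}$ bound already charged to $E_{j'+1}$. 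Handling this cleanly requires choosing the ordering $E_{j'+1},\dots,E_j$ adaptively (first the $x_2$-introducing part, then a part sharing a $y$-leaf with $x_1$ if one exists, then the rest, then $E_j$ last) so that the savings land on distinct parts; once the ordering is fixed, the per-tile estimates are all instances of the same "fixed vertices/colors reduce the exponent" counting already used above, so the remaining work is routine.
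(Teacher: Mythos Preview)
Your overall architecture matches the paper's: pick $E_{j'+1}$ to introduce $x_2$ (paying up to $O(n^{2t+2})$), then iterate the Claim~\ref{lemma:first} argument on the rest, and finally locate one or two ``compensating'' parts $E_q$ whose completion count is abnormally small. The paper also routes into outcome~(1) by first checking whether some unfixed $E$ already has two endpoints in $G_{C'}$ \emph{or} is a partner of a fixed tile with one endpoint in $G_{C'}$; only when both fail does it pay $O(n^{2t+2})$ for $E_{j'+1}$. Your version of this branch is close but partial (you only mention one shared leaf, not the partner case).

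The genuine gap is in your mechanism for \emph{guaranteeing} the compensating savings. You propose to ``track the multiset of odd-degree vertices'' and assert that absorbing them forces enough coincidences, but you never quantify how many such coincidences must occur or why they cannot all be absorbed by $E_{j'+1}$ and $E_j$ alone. The paper's device is a \emph{leaf-count invariant}: let $\ell_i$ be the number of degree-$1$ vertices on the $Y$-side after adding $E_{j'+i}$. Then $\ell_0=s\ge 2$, $\ell_{j-j'}=0$, and one argues that after the first one or two steps $\ell_1\ge \ell_0$ or $\ell_2\ge \ell_0$, so at least $s$ leaves must be eliminated strictly later. Each leaf-elimination is then classified (a $z$-leaf set with $z\ge 2$, a $(1,2^+)$-set, a $(1,1)$-set, or a $(z{+}1)$-leaf pair), and each class carries a definite completion cost. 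This is what pins down outcomes (2) and (3): either a single step kills $\ge 3$ leaves (or is a $(1,2^+)$-set) and costs $O(n^{2t-1})$, or two separate steps each kill at least one leaf and each cost $O(n^{2t})$. Your odd-degree bookkeeping does not distinguish these cases and gives no lower bound on the number of leaf-eliminating steps that occur after $E_{j'+1}$.

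You also need a separate treatment of $s=2$. Here $C'$ consists of two tiles each contributing one edge, and the paper chooses $E_{j'+1}$ specifically as the \emph{partner} of $E_1$; depending on whether that partner's edge hits $y_2$ or a new $y_i$, one either lands in outcome~(1) directly or forces $\ell_1\ge 3$, reducing to the $s\ge 3$ analysis. Your proposal does not isolate this case, and without it the leaf/odd-degree budget is too tight to force the needed $E_q$.
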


\begin{proof}
We prove this in three steps, but first, we establish some notation. For any sequence $E_{j'+1},...,E_{j}$ of $\mathcal{E}\setminus \{E_1,...,E_{j'}\}$ and for $0 \leq i \leq j - j'$, we let $\ell_i$ denote the number of leaves in $G_{C'}$ after the $E_{j'+i}$ has been added to $G_{C'}$. Note that $\ell_0 = s \geq 2$, and $\ell_{j-j'} = 0$ since $G_C \cong K_{2,t}$. For $1 \leq i \leq j - j'$, we say that $E_{j'+i}$ \emph{eliminates} $z$ leaves if $\ell_i = \ell_{i-1} - z$, and we say that it \emph{creates} $z$ leaves if $\ell_i = \ell_{i-1} + z$.

\medskip

\noindent \textbf{Step 1: Choosing $E_{j'+1}$.} Suppose either of the following circumstances occurs:
\begin{enumerate}
    \item[(i)] There exists an $E\in \mathcal{E}\setminus \{E_1,\dots,E_{j'}\}$ such that at least two endpoint of $E$ are present in $G_{C'}$.
    \item[(ii)] There exists an odd color class in $G_{C'}$ coming from $E_{i}$ for some $i\in [j']$, and the partner of $E_{i}$, say $E\in \mathcal{E}\setminus \{E_1,\dots,E_{j'}\}$ has at least one endpoint present in $G_{C'}$.
\end{enumerate}
In either case, there are $O(n^{2t+1})$ ways to complete $E$ to a tile. After $E$ is completed to a tile, if $G_{C'}$ is no longer a star centered at $x_1$. By applying \cref{lemma:first} to $\mathcal{E} \setminus \{E_{1},...,E_{j'},E\}$, we can obtain the sequence desired in (1) with $E = E_{j'+1}$. If $G_{C'}$ is still a star centered at $x_1$, we repeat the argument again.

We now assume no such $E\in \mathcal{E}\setminus \{E_1,...,E_{j'}\}$ exists. In this case, we define $E_{j'+1}$ to be the edge set containing $x_2y_1$. Note that there are at most $O(n^{2t+2})$ ways to complete $E_{j'+1}$ to a tile, and after $E_{j'+1}$ is completed to a tile, $G_{C'}$ has $|X(G_{C'})|=2$. By iteratively applying \cref{lemma:first}, there exists a sequence $E_{j'+2}, \dots, E_{j}$ such that there are $O(n^{2t+1})$ ways to complete $E_i$ to a tile for all $j'+2 \leq i \leq j $, and there are $O(n^{2t})$ ways to complete $E_{j}$ to a tile.  

What remains to be done in order to prove the existence of (2) or (3) is to guarantee the existence of the desired edge set $E_q$. We accomplish this by analyzing how leaves are created and eliminated. Note that after $E_{j'+1}$ is added to $G_{C'}$, $|X(G_{C'}|=2$.

\medskip

\noindent\textbf{Step 2: Analyzing leaves.} Since we assumed $E_{j'+1}$ did not satisfy (i) and (ii), we have $\ell_1 \geq \ell_0 - 1$ and if $|E_{j'+1}|= 1$ then the color of $E_{j'+1}$ was not already present in $G_{C'}$. In this case, we can assign the partner of $E_{j'+1}$ as $E_{j'+2}$. Note that there are either $O(n^{2t})$ ways to complete $E_{j'+2}$ to a tile and we have (3) with $q=j'+2$ or there are $O(n^{2t+1})$ ways to complete $E_{j'+2}$ to a tile and $E_{j'+2}$ must create at least one leaf. Thus, we may assume that either $\ell_1 \geq \ell_0$ or $\ell_2 \geq \ell_0$. Since $\ell_0 = s \geq 2$ and $\ell_{j-j'} = 0$, at some point, leaves must be eliminated. This must happen in one of the following ways.  

Suppose for some $i\in [j-j']$, there exists some $E_{j'+i}$ that eliminates $z\geq 2$ leaves. Then there are $O(n^{2t+2-z})$ ways to complete $E_{j'+i}$ to a tile (see left of \cref{fig:leaf-combos}). We call $E_{j'+i}$ a \textit{$z$-leaf set}. Next, suppose $E_{j'+i}$ eliminates one leaf. If $|E_{j'+i}|\geq 2$ then there are $O(n^{2t-1})$ ways to complete $E_{j'+i}$ into a tile and we call $E_{j'+i}$ a $(1,2^+)$ set. Now suppose $|E_{j'+i}|=1$. If the partner of $E_{j'+i}$ is already present in $G_{C'}$ prior to $E_{j'+i}$, then there are $O(n^{2t})$ ways to complete $E_{j'+i}$ to a tile. We call $E_{j'+i}$ a \textit{(1,1)-leaf set}. If the partner of $E_{j'+i}$ is not already present in $G_{C'}$ then there are $O(n^{2t+1})$ ways to complete $E_{j'+i}$ to a tile and we let the partner of $E_{j'+i}$ be $E_{j'+i+1}$. If $E_{j'+i+1}$ is a $z$-leaf-set, or if $E_{j'+i+1}$ eliminates $z=0$ leaves, then there are $O(n^{2t+1-z})$ ways to complete $E_{j'+i+1}$ to a tile. We call $(E_{j'+i},E_{j'+i+1})$ a \textit{$(z+1)$-leaf pair} (see right of \cref{fig:leaf-combos}). 

The only other case is if $
E_{j'+i+1}$ creates at least one leaf and thus, $\ell_{i+1}\geq \ell_{i-1}$. Table \ref{tab:leaf} summarizes the four possibilities for how leaves are eliminated, and the corresponding number of ways to complete $E_{j'+i}$ (and $E_{j'+i+1}$) to a tile.

\begin{table}[h!]
\renewcommand{\arraystretch}{1.5}
    \centering
    \begin{tabular}{|c|c|c|}
       \hline
       $E_{j'+i}$ or $(E_{j'+i},E_{j'+i+1})$ & number of leaves eliminated & ways to complete to a tile  \\
       \hline \hline
       $z$-leaf set & $z\geq 2$ & $O(n^{2t+2-z})$ \\
       \hline
       $(1,1)$-leaf set & $1$ & $O(n^{2t})$ \\
       \hline
       $(1,2^+)$-leaf set & $1$ & $O(n^{2t-1})$ \\
       \hline
       $(z+1)$-leaf pair & $z+1\geq 1$ & $O(n^{2t+1})$, $O(n^{2t+1-z})$\\
       \hline
       
    \end{tabular}
    \caption{A summary of the ways leaves can get eliminated as $G_{C'}$ becomes $G_{C}$.}
    \label{tab:leaf}
\end{table}

\noindent\textbf{Step 3: Finding $E_q$.} If $s \geq 3$, then one of the following must occur. First, either we have a $z$-leaf set or a $z$-leaf pair for some $z\geq 3$. Both of which have some $E_{j'+i}$ with $O(n^{2t-1})$ ways to complete it to a tile. Letting $q=j'+i$, we achieve (2). Otherwise, we must have at least two sets or pairs $E_{j'+i}$ or $(E_{j'+i},E_{j'+i+1})$ and $E_{j'+i'}$ or $(E_{j'+i'},E_{j'+i'+1})$ that each eliminate at least one leaf. Since each contains some $E_q$ with $O(n^{2t})$ ways to complete it to a tile (one of which can be assumed to be $E_{j}$ since the final $E_j$ must eliminate a leaf by irreducibility), we achieve (3).  

If $s = 2$, then $C'$ consists of two tiles $T_1$ and $T_2$ such that $E_1$ and $E_2$ are single edges, $x_1y_1$ and $x_1y_2$ respectively, without loss of generality. Moreover, each edge must be colored with distinct colors. Let $E_{j'+1}$ be the partner of $E_1$. $E_{j'+1}$ must contain some edge $x_2y_i$ for some $i\in [r]\setminus \{1\}$. If $y_i = y_2$, then there are $O(n^{2t+1})$ ways to complete $E_{j'+1}$ to a tile, and then by applying \cref{lemma:first}, we can achieve (1). If $y_i \neq y_2$, then there are $O(n^{2t+2})$ ways to complete $E_{j'+1}$ to a tile, but now $\ell_1 \geq 3$. By applying the same logic as above, either (2) or (3) must be achieved, thus completing the proof.
\end{proof}

\begin{figure}[h!]
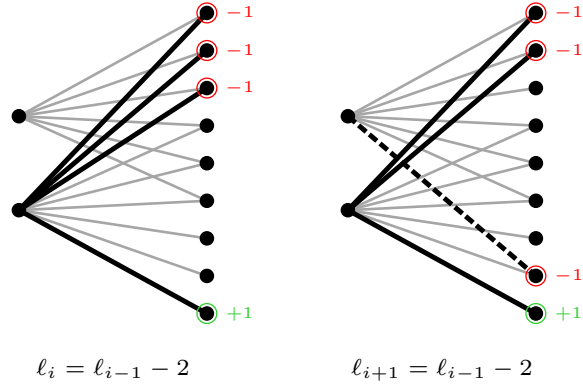

    \centering
    \includestandalone[width=0.5\textwidth]{leaf-combos}
    \caption{An example of adding $E_{j'+i}$ (black) to $G_{C'}$ (gray). On the left: $E_{j'+i}$ is a $2$-leaf set. On the right: $(E_{j'+i},E_{j'+i+1})$ is a $2$-leaf pair ($E_{j'+i}$ dashed and $E_{j'+i+1}$ solid). The $-1$ indicates the elimination of a leaf and the $+1$ indicates the creation of a leaf.}
    \label{fig:leaf-combos}
\end{figure}

Now by \cref{lemma: fixed star}, we get in all three cases,
\begin{enumerate}
    \item[(1)] $\Delta_{j'}(\mathcal{C}^{(j)})\leq O(n^{(2t+1)(j-j'-1)+2t}) = O\left(n^{(2t+1)(j-j'-\frac{1}{2t+1})}\right)$.
    \item[(2)] $\Delta_{j'}(\mathcal{C}^{(j)})\leq O(n^{2t+2+(2t+1)(j-j'-2)+2t-1})=O\left(n^{(2t+1)(j-j'-\frac{1}{2t+1})}\right)$.
    \item[(3)] $\Delta_{j'}(\mathcal{C}^{(j)})\leq O(n^{2t+2+(2t+1)(j-j'-3)+4t})=O\left(n^{(2t+1)(j-j'-\frac{1}{2t+1})}\right)$.
\end{enumerate}

We now satisfy that $\Delta_{j'}(\cC^{(j)}) \leq d^{j-j'-\varepsilon}$ for any $\varepsilon\in (0,\frac{1}{2t+2})$. Thus, \ref{C3} is satisfied.

\subsubsection{Proving 
condition \ref{C2}}

Next, we prove condition \ref{C2} in the following lemma.

\begin{lemma}
$\Delta(\mathcal{C}^{(j)})\leq \ell d^{j-1}$ for all $3\leq j\leq \ell$.
\end{lemma}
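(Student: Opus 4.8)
The plan is to bound, for a fixed tile $T_1 \in \cH_1$, the number of conflicts $C \in \cC^{(j)}$ with $T_1 \in C$. This is essentially the same counting argument used for condition \ref{C3}, but now we start from a single fixed tile rather than from $j'$ tiles, so we expect to gain a factor of roughly $n^{2t+1}$ per additional tile chosen, giving $O(n^{(2t+1)(j-1)})$, which is comfortably below $\ell d^{j-1} = \ell (n^{2t+1}/t!)^{j-1}$. First I would fix a tile $T_1$, and recall that for each conflict $C \ni T_1$ we have fixed an irreducible bad copy $G_C$ of $K_{2,r}$ with $2 \le r \le t$ using at least one edge from each tile of $C$, together with a partition $\cE = \{E_1, \dots, E_j\}$ of $E(G_C)$ into nonempty parts, one per tile, with $E_1$ corresponding to $T_1$. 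As noted in the \ref{C3} argument, there is only a constant number (depending on $t$) of choices for $G_C$, for the partition $\cE$, and for the subgraph $G_{C'}$ determined at each stage by the already-fixed tiles, so it suffices to bound the number of ways to complete $G_{C'} = G_{\{T_1\}}$ to an irreducible bad $K_{2,r}$ by revealing the tiles one at a time in a suitable order.

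Next I would run the same leaf-counting / ordering scheme as in Claims \ref{lemma:first} and \ref{lemma: fixed star}, applied with $j' = 1$. The starting subgraph $G_{\{T_1\}}$ is a subgraph of a single tile; in particular $|X(G_{\{T_1\}})| \in \{1,2\}$. If $|X(G_{\{T_1\}})| = 2$, iterating \cref{lemma:first} after each newly revealed tile gives $O(n^{(2t+1)(j-2)+2t}) = O(n^{(2t+1)(j-1) - 1})$ ways. If $|X(G_{\{T_1\}})| = 1$, then $G_{\{T_1\}}$ is a star centered at some $x_1$, and \cref{lemma: fixed star} (with $j' = 1$) yields, in each of its three cases, a product of the quoted bounds that equals $O(n^{(2t+1)(j-1) - 1})$, exactly as in the three displayed inequalities following that claim with $j'$ replaced by $1$. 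In either case the number of conflicts $C \in \cC^{(j)}$ containing $T_1$ is $O(n^{(2t+1)(j-1)-1})$, so taking the maximum over $T_1$,
\[
\Delta(\cC^{(j)}) = O\!\left(n^{(2t+1)(j-1)-1}\right) \le d^{j-1} \le \ell d^{j-1}
\]
for all sufficiently large $n$, since $d^{j-1} = (n^{2t+1}/t!)^{j-1} = \Theta(n^{(2t+1)(j-1)})$ dominates $n^{(2t+1)(j-1)-1}$.

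The only real subtlety is making sure \cref{lemma:first} and \cref{lemma: fixed star} are genuinely applicable with $j' = 1$: those claims were phrased for $2 \le j' \le j-1$, but their proofs only used that $C'$ is a matching whose fixed edge sets $E_1, \dots, E_{j'}$ induce $G_{C'}$, together with irreducibility of $G_C$ to force the existence of a leaf in $G_{C'}$ or to rule out $G_{C'} \cong K_{2,i}$. With $j' = 1$ we have $G_{\{T_1\}} \subseteq S_{T_1}$, a subgraph of $K_{t+1,t+1}-(t+1)K_2$ with all edges one color; since it uses at least one edge and $r \ge 2$, it has at most one leaf removed per step and the irreducibility arguments go through verbatim (when $|X(G_{\{T_1\}})|=1$ it is a star, handled by \cref{lemma: fixed star}; when $|X(G_{\{T_1\}})|=2$ the base case $s \ge 2$ needed in \cref{lemma:first} holds because a single one-color tile cannot by itself be a bad copy of $K_{2,r}$ with a leaf only if $r \ge 2$, and in fact a star $K_{1,2}$ from a single tile has $|X|=1$, not $2$). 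Thus the hard part is purely bookkeeping — confirming the boundary case $j'=1$ of the two earlier claims — after which the exponent arithmetic is immediate. I would therefore present the lemma's proof as: "fix $T_1$; the bound $O(n^{(2t+1)(j-1)-1})$ follows by applying the argument of Section~\ref{sec:conC}'s proof of \ref{C3} with $j'=1$; since $d^{j-1} = \Theta(n^{(2t+1)(j-1)})$, this is at most $\ell d^{j-1}$ for large $n$," spelling out only the two-line exponent comparison in detail.
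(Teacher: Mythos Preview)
Your approach is close to the paper's but has a real gap: Claims~\ref{lemma:first} and~\ref{lemma: fixed star} are stated and proved only for $2 \le j' \le j-1$, and their proofs genuinely use $j' \ge 2$. In Claim~\ref{lemma:first}, the deduction $s \ge 2$ (in the ``no odd color class'' subcase) explicitly invokes $j' \ge 2$ to produce two tiles each contributing a single edge. In Claim~\ref{lemma: fixed star}, the assertion $\ell_0 = s \ge 2$ relies on the star $G_{C'}$ receiving at least one edge from each of $j' \ge 2$ tiles; with $j'=1$ one may have $s=1$, a case the proof never treats. Most critically, the $s=2$ case in Step~3 opens with ``$C'$ consists of two tiles $T_1$ and $T_2$ such that $E_1$ and $E_2$ are single edges \dots colored with distinct colors'' and then exploits the partner of $E_1$; with $j'=1$ and $s=2$ the single fixed tile contributes both star edges in one color, and that argument collapses. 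Your final paragraph does not engage with these specific failure points --- it neither handles $s=1$ nor the altered structure of the $s=2$ case --- so the claim that the proofs ``go through verbatim'' is incorrect.

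The paper sidesteps all of this by \emph{not} pushing the claims down to $j'=1$. It fixes $T_1$ and first chooses a second tile crudely: whether $|X(G_{C'})|=1$ or $2$, some $E_2 \in \cE \setminus \{E_1\}$ has at least one vertex already in $G_{C'}$, so $E_2$ can be completed to a tile in $O(n^{2t+2})$ ways. With two tiles now fixed, the already-established \ref{C3} bound $\Delta_2(\cC^{(j)}) = O\bigl(n^{(2t+1)(j-2-\frac{1}{2t+1})}\bigr)$ applies directly, and multiplying gives $O\bigl(n^{2t+2 + (2t+1)(j-2)-1}\bigr) = O\bigl(n^{(2t+1)(j-1)}\bigr) = O(d^{j-1}) \le \ell d^{j-1}$. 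This is a factor of $n$ weaker than the bound you asserted, but still suffices for \ref{C2}, and it requires no new case analysis beyond what \ref{C3} already provided.
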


\begin{proof}
Let $C' = \{T_1\}$ be a tile from $\mathcal{H}_1$. We want to provide an upper bound for the number of conflicts $C$ that contain $C'$. Let $E_1\in \mathcal{E}$ be the edge set corresponding to $T_1$. First suppose $|X(G_{C'})|=1$, say $X(G_{C'})=\{x_1\}$. Let $y_1\neq x_1$ be a leaf of $G_{C'}$. Let $E_2\in \mathcal{E}\setminus \{E_1\}$ be the edge set  which contains $x_2y_1$. Since $y_1$ is present in $G_{C'}$, there are at most $O(n^{2t+2})$ ways to complete the tile.

Next, suppose $|X(G_{C'})|=2$. Then letting $E_2$ be any edge set from $\mathcal{E}\setminus \{E_1\}$, we see again there are at most $O(n^{2t+2})$ ways to complete $E_2$ to a tile. 

Once $E_1$ and $E_2$ are fixed and completed to tiles $T_1$ and say $T_2$, we can apply the same upper bound as the one found in \ref{C3} with $C' = \{T_1,T_2\}$ to obtain the following desired bound. \[\Delta(\cC^{(j)})\leq O\left(n^{2t+2+(2t+1)(j-2-\frac{1}{2t+1})}\right) = O(n^{2t+2+(2t+1)j - 4t-2-1}) = O(n^{(2t+1)(j-1)}) = O(d^{j-1}). \] Thus, $\Delta(\mathcal{C}^{(j)})\leq \ell d^{j-1}$ for some sufficiently large constant $\ell$.
\end{proof}

\subsection{The conflict system $\cD$}\label{sec:conD}

We now define conflict system $\cD$, similarly to $\cC$.

\begin{definition}\hfill
\begin{itemize}
    \item A matching $D = \{T_1, \dots, T_j\}$  in $\cH$ with $j\ge 2$ is called a {\em conflict (with respect to $\mathcal{H}$)} if  $E_D$ contains a bad copy $K$ of $K_{2,r}$ for some $r\ge 1$, so that $K$ uses at least one edge from each tile in $D$. 

    \item  A conflict $D = \{T_1, \dots, T_j\}$  in $\cH$ is  {\em irreducible} if $E_D$ contains (at least one) irreducible bad copy $K_D$ of  $K_{2,r}$ for some $r\ge 1$, so that $K_D$ uses at least one edge from each tile in $D$.

    \item A conflict $D$ that is not irreducible is called {\em reducible}.

    \item   Let $\cD$ be the set of all inclusion-minimal irreducible conflicts in $\cH$.

\end{itemize}

\end{definition}

Given a conflict $D\in \mathcal{D}$, we let $G_D$ denote the $K_{2,r}$ subgraph of $K_{n,n}$ which is colored with an irreducible bad coloring. This way $V(D)$ (tiles in $\mathcal{H}$) and $V(G_D)$ (vertices in $K_{n,n}$) can be differentiated. If we have several tiles \emph{fixed} in a conflict $D\in \mathcal{D}$, we use $D'$ to denote the fixed subconflict of $D$ and $G_{D'}$ to denote the fixed subgraph of $G_D$.

Observe that by this definition, every conflict in $\cD$ has at least two edges which are tiles from $\cH_2$, but no more than $2t$, and hence property \ref{D1} is satisfied.

\begin{claim}\label{claim:conflictsize2}
    If $F$ is a conflict with respect to $\cH_1$ or $\cH$, then $|F| \geq 3$.
\end{claim}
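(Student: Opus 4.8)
The plan is to handle the two cases separately. For a conflict $F$ with respect to $\cH_1$, the statement $|F|\ge 3$ is exactly Claim \ref{claim:conflictsize1}, which has already been proved; so nothing new is needed there. The substance is the case where $F$ is a conflict with respect to $\cH$, i.e.\ $F$ may contain tiles from $\cH_2$. By definition $|F|\ge 2$, so I must rule out $|F|=2$. Write $F=\{T_1,T_2\}$. There are three subcases according to how many of $T_1,T_2$ come from $\cH_2$.

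First, if both $T_1,T_2\in\cH_1$, then $F$ is also a conflict with respect to $\cH_1$, and Claim \ref{claim:conflictsize1} gives a contradiction. Second, suppose exactly one of them, say $T_2$, lies in $\cH_2$: then $T_2$ contributes a single edge $xy$ of $K_{n,n}$ (colored by a color in $N_2$), while $T_1$ contributes the edge set of a monochromatic copy $S_{T_1}$ of $K_{t+1,t+1}-(t+1)K_2$ (colored by a color in $N_1$). Since $T_2$ contributes only one edge and $F$ being a conflict requires a bad copy $K$ of $K_{2,r}$ using at least one edge from \emph{each} tile, the edge $xy$ is forced to be the unique edge of $K$ coming from $T_2$. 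But the edges from $N_1$ and those from $N_2$ are in disjoint color sets, so the color class of $xy$ in $K$ is just $\{xy\}$ — a single edge — which is an odd color class. This contradicts $K$ being bad. Third, if both $T_1,T_2\in\cH_2$, then $E_F$ consists of exactly two edges of $K_{n,n}$, colored by (possibly equal) colors from $N_2$; in fact $\cH_2$ being a matching means the two edges are disjoint in $K_{n,n}$, so they cannot both lie in any copy of $K_{2,r}$ (which is connected), so no bad $K$ exists using an edge from each — contradiction. Actually one should be slightly careful: even if the two edges shared a vertex, two edges form at most a $K_{1,2}$ and each color class has size $1$ (if the colors differ) or the single class has size $2$; either way one checks a $K_{2,r}$ built from just two edges cannot be bad, or more simply that two edges cannot form a $K_{2,r}$ with $r\ge 1$ while using an edge from each of two distinct tiles nontrivially — a $K_{2,1}$ has one edge, a $K_{2,r}$ with $r\ge 2$ has $\ge 3$ edges when it is bad since a bad copy needs every color class even hence each $\ge 2$, forcing $\ge 2$ edges of each of the $\ge 1$ colors, but with only two edges total this means $r=1$ or one color of multiplicity two, and a multiplicity-two monochromatic $K_{2,1}$-ish configuration is impossible since $K_{2,r}$ has no repeated edges. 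So $|F|\ge 3$.

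The main obstacle is mostly bookkeeping: being precise about what ``$E_F$ contains a bad copy $K$ of $K_{2,r}$ using at least one edge from each tile'' forces, in each of the three subcases, and in particular noticing that the disjointness of $N_1$ and $N_2$ immediately kills the mixed subcase (a single $\cH_2$-edge can never sit inside an even color class alongside $N_1$-colored edges). Once that observation is in hand, the argument is short. I would write the proof as: ``The first statement is Claim \ref{claim:conflictsize1}. For the second, suppose $|F|=2$, write $F=\{T_1,T_2\}$, and split into the three cases above,'' then give the one- or two-line contradiction in each case, reusing Claim \ref{claim:conflictsize1} verbatim for the $\cH_1$-only case.
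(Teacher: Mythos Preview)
Your overall strategy matches the paper's: reduce the $\cH_1$-only case to Claim~\ref{claim:conflictsize1}, then treat the case with $\cH_2$-tiles. Your handling of the mixed subcase (one tile from each $\cH_i$) via the disjointness of $N_1$ and $N_2$ is correct and is essentially the reason the paper can assert that a conflict with respect to $\cH$ must contain at least two $\cH_2$-tiles.

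However, your third subcase (both $T_1,T_2\in\cH_2$) contains a genuine gap. First, the assertion ``$\cH_2$ being a matching means the two edges are disjoint in $K_{n,n}$'' is false: if $T_1=e_{xy,c_1}$ and $T_2=e_{xy',c_2}$ with $c_1\ne c_2$, then $T_1\cap T_2=\emptyset$ even though the edges $xy,xy'$ share the vertex $x$. Second, and more importantly, your fallback argument does not dispose of the monochromatic shared-vertex case. If the two edges share a vertex and carry the \emph{same} color, they form a $K_{2,1}$ whose single color class has size $2$---so this \emph{is} a bad copy, contrary to your claim that ``either way \dots\ cannot be bad''. The statement ``a $K_{2,1}$ has one edge'' is also incorrect ($K_{2,1}$ has two edges), and the subsequent remark about ``no repeated edges'' does not exclude this configuration.

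The missing step is precisely what the paper uses: if $T_1=e_{xy,c}$ and $T_2=e_{xy',c}$ share the vertex $x$ and the color $c$, then $x^{c}\in T_1\cap T_2$, contradicting that $\{T_1,T_2\}$ is a matching in $\cH$. Equivalently, two $\cH_2$-tiles of the \emph{same} color are vertex-disjoint in $K_{n,n}$; two $\cH_2$-tiles of \emph{different} colors that share a vertex give a $K_{2,1}$ with two odd color classes. Inserting this one-line matching-violation argument closes the gap and makes your proof coincide with the paper's.
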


\begin{proof}
Let $F$ be a conflict with respect to $\mathcal{H}$ or $\mathcal{H}_1$ and let $K$ be an irreducible bad $K_{2,r}$ corresponding to $F$.

If $F$ is a conflict with respect to $\mathcal{H}_1$ then by \cref{claim:conflictsize1}, we have $|F|\geq 3$. Thus, we may assume $F$ is a conflict with respect to $\mathcal{H}$ and hence must contain at least two tiles from $\mathcal{H}_2$. We now assume for contradiction that $F = \{T_1,T_2\}$ where both $T_1,T_2\in \mathcal{H}_2$ and hence $K$ is isomorphic to $K_{2,1}$.

Without loss of generality, $T_1 = \{ x, y_1, c_1 \}, T_2 = \{ x, y_2, c_2 \}$.  If $c_1 \neq c_2$, then $K$ is not bad, a contradiction. If $c_1 = c_2 = c$, then $x^c \in T_1 \cap T_2$, contradicting that $T_1$ and $T_2$ form a matching. 
\end{proof}

We now prove the following claims that are analogous to \cref{claim:confcontainirr} and \cref{claim: matching H1 is a good coloring}. We provide the statements and proofs below for completeness. 

\begin{claim}\label{claim: F contains conflict-system}
    Every conflict $F$ (with respect to either $\mathcal{H}$ or $\mathcal{H}_1$) contains a conflict $F'$ such that $F'\in \mathcal{C}\cup \mathcal{D}$.
\end{claim}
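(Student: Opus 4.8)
The plan is to mimic the proof of \cref{claim:confcontainirr}, but now keeping track of which tiles come from $\cH_1$ and which come from $\cH_2$, so that the minimal irreducible subconflict we extract lies in the correct system. Let $F$ be a conflict with respect to $\cH$ (the case of $\cH_1$ is exactly \cref{claim:confcontainirr}, so assume $F$ uses at least one tile from $\cH_2$). If $F$ is irreducible, then by definition $\cD$ (or $\cC$, if $F$ happens to be an $\cH_1$-only conflict) contains all inclusion-minimal irreducible conflicts, so some $F' \subseteq F$ lies in $\cC \cup \cD$. Otherwise $F$ is reducible, and I would run the same decomposition argument: choose the smallest reducible conflict $F = \{T_1,\dots,T_j\}$ admitting no subconflict in $\cC \cup \cD$; by \cref{claim:conflictsize2}, $j \ge 3$; take the bad copy $K$ of $K_{2,r}$ witnessed by $F$, which must be reducible, decompose it into irreducible bad copies $K_1,\dots,K_k$ ($k\ge 2$) sharing the common side $\{x_1,x_2\}$, and let $F^i$ be the set of tiles of $F$ meeting $K_i$ in at least one edge.

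The key structural observation, exactly as before, is that at most one index $i$ can have $|F^i| = 1$: a single-tile $F^i$ must contain the edge $x_1x_2$ (viewed as the relevant element — either the non-edge vertex $x_1x_2$ of an $\cH_1$-tile, or, in the $\cH_2$ case, both endpoint-copies), and two such singletons would force two tiles of $F$ to share that element, contradicting that $F$ is a matching. Here I need to be slightly careful about $\cH_2$-tiles: an $\cH_2$-tile $e_{xy,i}$ contributes only the single edge $xy$ to $K$, so an $F^i$ consisting of one $\cH_2$-tile contributes a single edge, and being a bad copy $K_i$ of $K_{2,r_i}$ with $r_i \ge 1$ and only one edge is impossible unless $K_i \cong K_{2,1}$ — but a single properly-colored edge is not bad. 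So in fact an $\cH_2$-singleton $F^i$ cannot occur at all, and the at-most-one-singleton claim goes through (the potential singleton must be an $\cH_1$-tile containing the non-edge $x_1x_2$). Then for every $i$ with $|F^i| \ge 2$: since $K_i$ is irreducible and $|F^i| \ge 2$, $F^i$ is an irreducible conflict, and moreover it is a conflict \emph{with respect to the appropriate system} — if $F^i$ uses only $\cH_1$-tiles it is an irreducible conflict in $\cH_1$ so contains a member of $\cC$, and if it uses at least one $\cH_2$-tile it is an irreducible conflict in $\cH$ containing at least two $\cH_2$-tiles (by the argument above there is no $\cH_2$-singleton, and a bad $K_{2,r_i}$ using an $\cH_2$-tile's single edge with odd total multiplicity needs a partner $\cH_2$-tile of the same color), hence contains a member of $\cD$. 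Either way $F^i \subseteq F$ contains a conflict in $\cC \cup \cD$, contradicting the choice of $F$.

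The main obstacle I anticipate is verifying the $\cH_2$ bookkeeping in the ``at most one singleton'' and ``$F^i$ with an $\cH_2$-tile has $\ge 2$ $\cH_2$-tiles'' steps: one must check that a bad copy of $K_{2,r}$ always contributes an \emph{even} number of edges in each color class it touches, so that any color used by an $\cH_2$-tile in $K$ must be used an even (hence $\ge 2$) number of times — but since distinct $\cH_2$-tiles of the same color share a vertex-copy $x^c$ or $y^c$ and cannot coexist in a matching, a color appearing on an $\cH_2$-tile can appear on \emph{at most one} $\cH_2$-tile, forcing that color to also appear on $\cH_1$-tiles to reach even multiplicity; I should double check this does not break the extraction, but it only ever helps, since it means any $F^i$ touching an $\cH_2$-tile is genuinely a conflict in $\cH$ and so lands in $\cD$. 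Once these multiplicity facts are in hand, the induction on $|F|$ closes exactly as in \cref{claim:confcontainirr}.
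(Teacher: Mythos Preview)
Your argument is correct and mirrors the paper's proof: both reduce to showing at most one $F^i$ is a singleton (necessarily an $\cH_1$-tile, hence containing the non-edge $x_1x_2$), and then extract an inclusion-minimal irreducible subconflict from any $F^i$ with $|F^i|\ge 2$, which lies in $\cC\cup\cD$ by definition. One caveat: the assertion in your final paragraph that two $\cH_2$-tiles of the same color cannot coexist in a matching is false (vertex-disjoint edges $x_1y_j$ and $x_2y_{j'}$ with $j\ne j'$ can share an $N_2$-color, which is precisely the ``partner'' you correctly invoked a paragraph earlier), but this entire worry is unnecessary --- since $\cC\cup\cD$ is \emph{defined} as the set of all inclusion-minimal irreducible conflicts in $\cH$, no separate accounting of the $\cH_2$-content of $F^i$ is needed to conclude $F'\in\cC\cup\cD$.
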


\begin{proof}
Let $F=\{T_1,\dots, T_j\}$ be a conflict.  
If $F$ is irreducible, then since $\mathcal{C}\cup \mathcal{D}$ contains all inclusion-minimal irreducible conflicts, there must exist a conflict $F'\in \mathcal{C}\cup \mathcal{D}$ such that $F'\subseteq F$. 

Assume for contradiction that there exists a reducible conflict $F$ that does not contain a conflict $F'$ with $F'\in \mathcal{C}\cup \mathcal{D}$. Choose the smallest such $F=\{T_1,\dots, T_j\}$. By Claim \ref{claim:conflictsize2}, $j\ge 3$. Since $F$ is a conflict, there exists a bad copy $K$ of $K_{2,r}$, for some $r\geq 1$, so that $K$ uses at least one edge from each tile in $F$. Since $F$ is reducible $K$ must be  reducible,  and thus  $r\ge 2$ and $K$ can be decomposed into bad copies $K_1,\dots,K_k$ of $K_{2,r_1},\dots,K_{2,r_k}$, respectively, for some $k\geq 2$, each with the same vertex side of size $2$ as $K$, say $\{x_1,x_2\}$. Moreover, we can choose such a $k$ so that each $K_i$ is irreducible for all $1\le i\le k$. Let $F^i=\{T_1^i,\dots,T_{\ell_i}^i\}$ be the tiles of $F$ that have at least one edge in $K_i$. 

Note that there is at most one $i\in [k]$ with $|F^i|=1$. To see this, observe that if $F^i=\{T^i_1\}$, then $T_1^i$ comes from $\mathcal{H}_1$ and hence $x_1x_2\in T^i_1$. Thus, if $|F^i|=|F^j|=1$ for distinct $i,j\in [k]$, then $x_1x_2\in T^i_1\cap T^j_1$ contradicting that $F$ is a matching. Now consider any $F^i$ with $|F^i|>1$. Since $K_i$ is irreducible and $|F^i|\geq 2$, $F^i$ is an irreducible conflict. Thus, there exists some $F'\subseteq F^i\subseteq F$ with $F'\in \mathcal{C}\cup \mathcal{D}$. 
\end{proof}

\begin{claim}\label{claim: matching is a good coloring}
If $M$ is a matching in $\cH$ and $M$ contains no conflict of $\cC \cup \mathcal{D}$ as a subset, then the edge-coloring of $K_{n,n}$ corresponding to $M$ has no bad copies of $K_{2,t}$. 
\end{claim}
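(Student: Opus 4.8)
The plan is to mirror the proof of Claim \ref{claim: matching H1 is a good coloring} almost verbatim, with \cH, \cD, and \cref{claim: F contains conflict-system} playing the roles previously played by \cH_1, \cC, and \cref{claim:confcontainirr}. Concretely, suppose $M = \{T_1,\dots,T_m\}$ is a matching in $\cH$ containing no conflict of $\cC \cup \cD$, and assume for contradiction that the edge-coloring of $K_{n,n}$ corresponding to $M$ has a bad copy $K$ of $K_{2,t}$. The colored edges of $K$ come from the tiles of $M$ that meet $E(K)$; let $F = \{T_{i_1},\dots,T_{i_j}\}$ be exactly those tiles.

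The first thing I would check is that $j \geq 2$, i.e. that the edges of $K$ cannot all lie in a single tile of $M$. Unlike in the $\cH_1$-only case — where this followed from the fact that $K_{t+1,t+1}-(t+1)K_2$ does not contain $K_{2,t}$ — here I must also rule out the possibility that all $t$ edges of $K$ lie in one tile of $\cH_2$. But each tile of $\cH_2$ has the form $\{xy, x^i, y^i\}$ and carries only the single edge $xy$ of $K_{n,n}$, so a single $\cH_2$-tile contributes at most one edge to $K$, whereas $K_{2,t}$ has $2t \geq 2$ edges. Hence $j \geq 2$, and therefore $F$ is a conflict with respect to $\cH$ by definition.

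Now I apply \cref{claim: F contains conflict-system}: the conflict $F$ contains a conflict $F' \in \cC \cup \cD$. Since $F' \subseteq F \subseteq M$, this shows $M$ contains a conflict of $\cC \cup \cD$ as a subset, contradicting the hypothesis on $M$. This contradiction completes the proof.

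I do not expect any real obstacle here — the argument is a routine transfer of the $\cH_1$-version to the $\cH$-version, with \cref{claim:conflictsize2} and \cref{claim: F contains conflict-system} having already done the structural work. The only point requiring a sentence of care is the observation that an $\cH_2$-tile contributes at most one $K_{n,n}$-edge (so that $K$ genuinely spans at least two tiles); everything else is bookkeeping.
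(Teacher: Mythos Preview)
Your proposal is correct and follows essentially the same approach as the paper's own proof. In fact, you are slightly more careful than the paper: the paper justifies $j\ge 2$ only by noting that $K_{t+1,t+1}-(t+1)K_2$ does not contain $K_{2,t}$, while you also explicitly rule out the (trivial) possibility that a single $\cH_2$-tile covers $K$.
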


\begin{proof}
Let $M = \{T_1,\dots,T_m\}$ be a matching in $\mathcal{H}$ that contains no conflict of $\mathcal{C}\cup \mathcal{D}$ as a subset, and suppose for contradiction that the edge-coloring of $K_{n,n}$ corresponding to $M$ has a bad copy $K$ of $K_{2,t}$.
Note that the colored edges of $K$ correspond to at least $2$ distinct tiles in $M$, since the graph $K_{t+1,t+1}-(t+1)K_2$ does not contain $K_{2,t}$ as a subgraph.  Let $F=\{T_1,\dots,T_j\}$ be the tiles of $M$ such that $E(K)\cap E(S_{T_i}) \neq \emptyset$. Then
$j\geq 2$. Thus, by definition, $F$ is a conflict. Now by Claim \ref{claim: F contains conflict-system}, $F$ must contain a conflict $F'$ such that $F'\in \mathcal{C}\cup \mathcal{D}$, a contradiction to the assumption on $M$.
\end{proof}

\subsubsection{Proving condition \ref{D2}}

We first check that $\left| \cD_x^{(j_1,j_2)} \right| \leq d^{j_1 + \varepsilon^4} \delta_P(\cH_2)^{j_2}$ for all $j_1 \in [0,O(1)], j_2 \in [2,O(1)]$.  To do so, first observe $\delta_P(\cH_2) = n^\delta$ since given any vertex in $P$ which is in a tile of $\cH_2$, there are $n^\delta$ ways to choose the color for the vertex in $P$, and tiles in $\cH_2$ correspond to colored edges of $K_{n,n}$.  To determine $\left| \cD_x^{(j_1,j_2)} \right|$, we count the number of ways to complete an edge $x \in P$ from a tile of $\cH_2$ to a conflict $D\in \mathcal{D}$ with $j_1$ tiles coming from $\cH_1$ and $j_2$ tiles coming from $\cH_2$.  Now, let $j_1^{(i)}$ denote the number of tiles from $\cH_1$ which contribute exactly $i$ edges to $G_D$ (a copy of $K_{2,r}$), where $1 \leq i \leq 2r$, $2 \leq r \leq t$.  Observe there are $\displaystyle c_r := \binom{j_1}{j_1^{(1)}, j_1^{(2)}, \dots, j_1^{(2r)}}$ many ways to choose the number of combinations of such types of tiles (that is, a number that depends on $r$, not $n$).  To complete $x$ to a conflict of appropriate size, there are $t-1$ ways to choose $r$, $O(n^r)$ ways to choose the remaining vertices in $G_D$ (fixing all graph edges in $G_D$), and $c_r$ ways to choose the number of edges each tile from $\cH_1$ can contribute to $G_D$.  For each $i \geq 2$, we can choose the $j_1^{(i)}$ tiles contributing $i$ edges in $O\left( \left(\frac{d}{n^{i-1}} \right)^{j_1^{(i)}} \right)$ ways. Now, there are $O\left( \left(\frac{d^2}{n} \right)^\frac{j_1^{(1)}}{2} \right)$ ways to choose the $j_1^{(1)}$ tiles contributing one edge (since for each color appearing in $G_D$, it must appear on an even number of edges; hence, we know one edge, and for half of the tiles, we know a color). Finally, there are $O(n^{\frac{1}{2} \delta j_2})$ ways to choose the tiles coming from $\cH_2$ since for every graph edge in a tile of $\cH_2$, there are $n^\delta$ ways to choose the color, but since the conflict corresponds to an even coloring, we need only choose colors for half of the tiles.  With the observation that $\displaystyle \sum_{i=1}^{2r} j_1^{(i)} = j_1$, we have that 
\begin{align*}
    \left| \cD_x^{(j_1,j_2)} \right| &\leq O \left(n^{r + \frac{1}{2} \delta j_2 - \frac{1}{2} j_1^{(1)} - \sum_{i=2}^{2r} (i-1) j_1^{(i)}} \right) \cdot (t-1) \cdot c_r \cdot O \left(d^{\sum_{i=1}^{2r}j_1^{(i)}} \right)\\
    &= O \left(n^{r + \frac{1}{2} \delta j_2 - \frac{1}{2} j_1^{(1)} - \sum_{i=2}^{2r} (i-1) j_1^{(i)}} \cdot d^{j_1} \right).    
\end{align*}

Now, observe that 
$$
    O \left(n^{r + \frac{1}{2} \delta j_2 - \frac{1}{2} j_1^{(1)} - \sum_{i=2}^{2r} (i-1) j_1^{(i)}} \cdot d^{j_1} \right) < O(d^{j_1 + \varepsilon^4} n^{\delta j_2} )
$$
if and only if
$$
    2r - \delta j_2 - j_1^{(1)} - \sum_{i=2}^{2r} 2 \cdot (i-1) j_1^{(i)} < 2 (2t+1) \varepsilon^4.
$$

Since the total number of edges in $G_D$ is $2r$, and each of the $j_1^{(i)}$ tiles from $\cH_1$ contribute $i$ of the $2r$ edges to $G_D$ while each of the $j_2$ tiles from $\cH_2$ contributes exactly one of the $2r$ edges to $G_D$, this inequality holds for our particular choice of $\varepsilon$ and $\delta \in \left( 1-(2t+1) \varepsilon^4, 1 \right) $. That is, $\left| \cD_x^{(j_1,j_2)} \right| \leq d^{j_1 + \varepsilon^4} n^{ \delta j_2}$, and condition \ref{D2} is satisfied.

\subsubsection{Proving condition \ref{D3}}

To obtain bounds on $\Delta_{j',0} \left( \cD_x^{(j_1,j_2)} \right)$, we count the number of ways to complete $j'$ tiles from $\cH_1$ to a conflict in $\cD_x^{(j_1,j_2)}$.  To this end, fix $j'$ tiles from $\cH_1$.  Observe also that $x$ is fixed.  There are $t-1$ ways to choose $r$ for $G_D$ corresponding to our conflict.  Let $r'$  be the number of graph vertices fixed by the $j'$ tiles and $x$ in the corresponding $G_D$ in the vertex side of size $r$.  Note that $x$ being fixed ensures at least one vertex in the vertex side of size 2 is fixed.  Lastly, let $j_1^{(i)}$ denote the number of \emph{unfixed} tiles from $\cH_1$ which contribute exactly $i$ edges to the corresponding $G_D$.  

We handle the two possible cases, $G_{D'}$ spans both $x_1$ and $x_2$ or $G_{D'}$ spans only $x_1$, with two claims.

\begin{claim}
    Suppose $G_{D'}$ spans both $x_1$ and $x_2$; that is, two vertices in the vertex side of size 2 are fixed by the $j'$ tiles or $x$. Then \ref{D3} holds.
\end{claim}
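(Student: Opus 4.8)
The plan is to bound $\Delta_{j',0}(\cD_x^{(j_1,j_2)})$ by counting the number of ways to complete the $j'$ fixed tiles from $\cH_1$, together with the fixed edge $x$, to a conflict $D \in \cD_x^{(j_1,j_2)}$, and to show this count is at most $d^{j_1-j'-\varepsilon}\,\delta_P(\cH_2)^{j_2} = d^{j_1-j'-\varepsilon} n^{\delta j_2}$. In the present case, the $j'$ fixed tiles together with $x$ already span both vertices $x_1,x_2$ of the size-$2$ side of $G_D$, so we may assume $G_{D'}$ contains $x_1$ and $x_2$ and some number $r'$ of vertices in the size-$r$ side. The completion then consists of: choosing $r$ (constant many ways, at most $t-1$), choosing the remaining $r-r'$ vertices in the size-$r$ side in $O(n^{r-r'})$ ways (which fixes all graph edges of $G_D$), choosing the multi-type partition of the $j_1-j'$ unfixed $\cH_1$-tiles into types $j_1^{(i)}$ according to how many edges $i$ they contribute to $G_D$ (constant many ways), and finally choosing the actual unfixed tiles and $\cH_2$-tiles.

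I would then reuse the counting bookkeeping from the proof of \ref{D2}: each unfixed $\cH_1$-tile contributing $i \geq 2$ edges is chosen in $O((d/n^{i-1})^{j_1^{(i)}})$ ways, the $j_1^{(1)}$ tiles contributing a single edge each are chosen in $O((d^2/n)^{j_1^{(1)}/2})$ ways (only half of them need a color specified, since each color used in $G_D$ appears on an even number of edges and the corresponding colored edge is already fixed), and the $j_2$ tiles from $\cH_2$ are chosen in $O(n^{\delta j_2/2})$ ways for the same even-parity reason. Multiplying everything, the exponent of $n$ becomes $(r - r') + \tfrac12\delta j_2 - \tfrac12 j_1^{(1)} - \sum_{i\ge 2}(i-1)j_1^{(i)}$ and the exponent of $d$ is $j_1 - j'$. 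So it suffices to check
$$ 2(r-r') - \delta j_2 - j_1^{(1)} - \sum_{i\ge 2} 2(i-1)j_1^{(i)} < -2(2t+1)\varepsilon. $$
Since the $2r$ edges of $G_D$ are split among the fixed tiles (contributing at least one edge each — in fact the $j'$ fixed tiles contribute at least $2r'$-worth toward the $r'$ fixed vertices on the size-$r$ side, plus the edges at $x_1,x_2$), the unfixed $\cH_1$-tiles contributing $\sum_i i\,j_1^{(i)}$ edges, and the $j_2$ tiles from $\cH_2$ contributing $j_2$ edges, a careful edge-accounting shows $2(r-r') \le j_1^{(1)} + \sum_{i\ge2} i\,j_1^{(i)} + j_2$; combined with $\delta$ close to $1$ this yields the strict inequality for our choice of $\varepsilon \in (0,\tfrac1{2t+2})$ and $\delta \in (1-(2t+1)\varepsilon^4,1)$.

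The main obstacle I anticipate is the edge-accounting step: one must argue carefully that fixing $r'$ vertices on the size-$r$ side (together with $x_1,x_2$ already present) forces enough edges of $G_D$ to have been "used up" by the fixed tiles and the $\cH_2$-tiles so that $2(r-r')$ does not exceed the number of edges contributed by the unfixed $\cH_1$-tiles plus the $\cH_2$-tiles, with a strict deficit that absorbs the $\varepsilon$-slack. This is exactly where the hypothesis of this claim — that both $x_1$ and $x_2$ are spanned by $G_{D'}$ — is used: each of the $r'$ fixed size-$r$ vertices has degree $2$ in $G_D$ (it is adjacent to both $x_1$ and $x_2$), and since $G_D$ is irreducible the unfixed part cannot itself split off a clean $K_{2,r-r'}$, which prevents the unfixed tiles from being "cheap." I would organize this as a short sub-argument bounding $r-r'$ against the combinatorial types, then plug into the displayed inequality; the remaining arithmetic is routine given the proof of \ref{D2}.
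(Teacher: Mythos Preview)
Your overall structure mirrors the paper's proof: count completions by first choosing the $r-r'$ unfixed vertices, then partition and choose unfixed tiles with the same bookkeeping as in \ref{D2}. The critical gap is in the final inequality. Your displayed target
\[
2(r-r') - \delta j_2 - j_1^{(1)} - \sum_{i\ge 2} 2(i-1)j_1^{(i)} < -2(2t+1)\varepsilon
\]
does \emph{not} follow from the edge-accounting bound $2(r-r') \le \sum_i i\,j_1^{(i)} + j_2$ that you propose. Substituting your bound yields only
\[
\text{LHS} \;\le\; (1-\delta)j_2 + \sum_{i\ge 2}(2-i)j_1^{(i)} \;\le\; (1-\delta)j_2,
\]
which is a small \emph{positive} quantity and can never fall below the negative right-hand side. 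Concretely, take $r=2$, $r'=1$, $j_1^{(1)}=1$, all other $j_1^{(i)}=0$, $j_2=2$ (an irreducible bad $K_{2,2}$ where the fixed $\cH_1$-tile contributes one edge and $x$ is the edge opposite to it): your left side is $1-2\delta$, but your edge bound only gives $\le 2(1-\delta)$, which does not suffice.

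What is missing is a concrete $n^{-1}$ saving extracted from irreducibility --- exactly the step you flag as ``the main obstacle'' but do not carry out. The paper's argument is this: since both $x_1,x_2\in V(G_{D'})$ and $G_D$ is irreducible, either $G_{D'}$ contains an odd color class (forcing the color of some unfixed partner tile, so it costs $O(d/n)$ instead of $O(d)$), or some unfixed tile already has two of its vertices in $G_{D'}$ (again costing $O(d/n)$). This is the same dichotomy as in \cref{lemma:first}. Either way the total count picks up an extra factor of $n^{-1}$, which after doubling inserts a $-2$ on the left of the inequality. Combined with the observation that the edge $x$ lies inside $K_{2,r'}$ (so the $\cH_2$ contribution over-counts $2(r-r')$ by at least one), the inequality then holds for all $\varepsilon\in(0,\tfrac{1}{2t+2})$. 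Without this extra $-2$, it does not, and your vague remark that irreducibility ``prevents the unfixed tiles from being cheap'' is not a substitute for this step.
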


\begin{proof}
    Suppose that two vertices in the vertex side of size 2 are fixed.  
    
    Since $G_{D'}$ is irreducible, either there exists an odd color class in $G_{D'}$ or there exists an unfixed tile which contributes at least two vertices to $G_{D'}$.

    In either case, there are at most $O \left(n^{r - r'} \right)$ ways to choose the remaining vertices in $G_
    D$.  Now we know all graph vertices and edges of $G_D$, and there are at most $c_r$ ways to choose the partition of the $j_1 - j'$ tiles from $\cH_1$.  Also, observe there are $O\left( \frac{d^2}{n} \right)^{\frac{1}{2} j_1^{(1)}}$ ways to choose the tiles from $\cH_1$ that contribute only one edge, and $O\left( \frac{d}{n^{i-1}} \right)^{j_1^{(i)}}$ ways to choose the tiles from $\cH_1$ that contribute exactly $i$ edges.  Lastly, there are $O(n^{\frac{1}{2} \delta j_2})$ ways to choose the tiles from $\cH_2$.  

    Since in either case, there is either an odd color class in $G_{D'}$ or an unfixed tile intersecting $G_{D'}$, then we have over-counted slightly the number to complete at least one tile above, and we thus have
    \begin{align*}
    \Delta_{j',0} \left( \cD_x^{(j_1,j_2)} \right) 
    &\leq (t-1) \cdot c_r \cdot  O \left( n^{r - r' - 1 - \frac{1}{2} j_1^{(1)} - \sum_{i=2}^{2r} (i-1) j_1^{(i)} + \frac{1}{2} \delta j_2} \right) \cdot d^{\sum_{i=1}^{2r} j_1^{(i)}} \\
    &= O \left( n^{r - r' - 1 - \frac{1}{2} j_1^{(1)} - \sum_{i=2}^{2r} (i-1) j_1^{(i)} + \frac{1}{2} \delta j_2} \cdot d^{j_1 - j'} \right).
\end{align*}

Now, observe that 
$$
    O \left( n^{r - r' - 1 - \frac{1}{2} j_1^{(1)} - \sum_{i=2}^{2r} (i-1) j_1^{(i)} + \frac{1}{2} \delta j_2} \cdot d^{j_1 - j'} \right) 
    \leq d^{j_1 - j' - \varepsilon} n^{\delta j_2}
$$
if and only if 
$$
    r - r' - 1 - \frac{1}{2} j_1^{(1)} - \sum_{i=2}^{2r} (i-1) j_1^{(i)} - \frac{1}{2} \delta j_2 < -(2t+1) \varepsilon,
$$
which is true if and only if
$$
    2r - 2r' - 2 - j_1^{(1)} - \sum_{i=2}^{2r} 2 (i-1) j_1^{(i)} - \delta j_2 < - 2(2t+1) \varepsilon. 
$$

We claim this inequality holds.  To see this, observe $2r - 2r'$ counts the number of edges in $E(G_D)\setminus E(G_{D'})$, which has no edges fixed by the $j'$ fixed tiles or $x$. Since $j_1^{(1)} + \sum_{i=2}^{2r} 2 (i-1) j_1^{(i)}$ is at least the number of edges contributed by the unfixed tiles from $\cH_1$ and $j_2$ is exactly the number of edges contributed by unfixed tiles from $\cH_2$, including the tile containing $x$, this bound holds for our choice of $\delta$.
\end{proof}

\begin{claim}
    Suppose $G_{D'}$ spans only $x_1$; that is, only one vertex in the vertex side of size 2 is fixed by the $j'$ tiles or $x$. Then \ref{D3} holds.
\end{claim}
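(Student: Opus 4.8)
The plan is to mirror the structure of the preceding claim (the case $|X(G_{D'})|=2$), but now contend with the complication that $G_{D'}$ is a star centered at $x_1$, so we do not yet have the full pair $\{x_1,x_2\}$ fixed. The strategy is to use the same leaf-analysis engine as in \cref{lemma: fixed star}: first reorder the unfixed tiles so that, after adjoining a controlled number of them to $G_{D'}$, we reach a configuration spanning both $x_1$ and $x_2$, and then invoke the counting from the previous claim. The key accounting identity remains the same: $2r-2r'$ counts the edges of $G_D$ not touched by the $j'$ fixed tiles or by $x$, and we must show that the number of ways to place the unfixed $\cH_1$-tiles and $\cH_2$-tiles realizing those edges beats $d^{j_1-j'-\varepsilon}n^{\delta j_2}$ by the required slack.

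First I would set up the edge-sets $E_i$ as before, partitioning $E(G_D)$ by which tile each edge belongs to, with $E_1,\dots,E_{j'}$ the fixed parts (now forming a star at $x_1$, together with $x$ and whatever $\cH_2$-edges are among the fixed tiles). As in Step 1 of \cref{lemma: fixed star}, I would distinguish two subcases: either some unfixed tile already has two endpoints present in $G_{D'}$ (or an odd color class forces a partner tile with an endpoint present), in which case that tile can be completed in $O(n^{2t+1})$ ways — losing a full factor of $n$ relative to the generic $O(n^{2t+2})$ — and we are reduced to the two-$x$-vertex situation; or no such tile exists, in which case we designate the unfixed tile containing $x_2y_1$ as the next one, completed in only $O(n^{2t+2})$ ways, but now $G_{D'}$ spans both $x_1$ and $x_2$ and, crucially, has a \emph{surplus} of leaves that must later be eliminated. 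In the latter situation I would re-run Steps 2 and 3 of \cref{lemma: fixed star}: leaves can only disappear via $z$-leaf sets, $(1,1)$- and $(1,2^+)$-leaf sets, or leaf pairs, each of which concedes the compensating power of $n$ (an $O(n^{2t-1})$ or an extra $O(n^{2t})$), and irreducibility forces the final tile $E_j$ to eliminate a leaf. Translating these savings into the exponent inequality, the extra $+1$ in the exponent coming from the single $O(n^{2t+2})$ step is exactly offset by the $-1$'s harvested from leaf elimination, so the net bound matches the target $2r-2r'-2-j_1^{(1)}-\sum_{i=2}^{2r}2(i-1)j_1^{(i)}-\delta j_2 < -2(2t+1)\varepsilon$, which holds for $\delta \in (1-(2t+1)\varepsilon^4,1)$ and $\varepsilon \in (0,\tfrac{1}{2t+2})$.

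The main obstacle I anticipate is bookkeeping the interaction between the fixed vertex $x$ (which lives in the $\cH_2$-part of the conflict and contributes one of the $j_2$ edges of $G_D$) and the leaf count: because $x$ already pins down one graph edge and sits in the size-2 side, I have to be careful that $r'$, the number of size-$r$-side graph vertices fixed by the $j'$ tiles \emph{and} $x$, is counted consistently with the leaf-elimination ledger, and that I am not double-counting the $\cH_2$-tile containing $x$. The cleanest way to handle this is to observe, as in the previous claim, that it suffices to prove the slightly weaker statement that the total number of ways to complete is at most a constant (depending only on $t$) times $n^{2r-2r'}$ divided by the appropriate powers collected from leaf eliminations, since $j_1^{(1)}+\sum_{i=2}^{2r}2(i-1)j_1^{(i)} \geq$ (edges contributed by unfixed $\cH_1$-tiles) and $j_2 \geq$ (edges contributed by unfixed $\cH_2$-tiles, including the one through $x$). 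Once this is in place, the exponent inequality follows exactly as in the two-$x$-vertex claim, completing the proof that \ref{D3} holds in this case as well.
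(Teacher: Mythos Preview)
Your approach diverges substantially from the paper's. The paper does \emph{not} invoke the leaf-elimination machinery of \cref{lemma: fixed star} for this claim; it uses a direct global count instead. Having fixed $x$ and the $j'$ tiles from $\cH_1$ (so $G_{D'}$ is a star at $x_1$ with $r'$ leaves), the paper simply chooses the remaining $r-r'+1$ graph vertices of $G_D$ in $O(n^{r-r'+1})$ ways; then, since every color class in the bad $K_{2,r}$ is even, the unfixed $\cH_1$-tiles contributing a single edge pair up by color (contributing $(d^2/n)^{j_1^{(1)}/2}$) and the $j_2$ $\cH_2$-tiles likewise pair up (contributing only $n^{\delta j_2/2}$ rather than $n^{\delta j_2}$). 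The resulting exponent inequality $2r-2r'+2-j_1^{(1)}-\sum_{i\ge 2}2(i-1)j_1^{(i)}-\delta j_2 < -2(2t+1)\varepsilon$ is then verified via the observation $r'\ge 2$ (since $j'\ge 1$, and the fixed $\cH_1$-edge and $x$ cannot share a $Y$-endpoint without colliding in $P$), together with a short case split: for $r'>2$ the slack is immediate, while for $r'=2$ one has $j'=1$ with that single fixed $\cH_1$-tile contributing exactly one edge, so its color-partner among the unfixed tiles saves an additional factor of $n$.

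Your route might be pushed through, but it imports the full three-case output of \cref{lemma: fixed star} into a setting where it is unnecessary, and it leaves a genuine gap: you never say how $\cH_2$-tiles participate in the leaf-elimination bookkeeping. For example, the tile containing $x_2y_1$ in your first step may itself be an $\cH_2$-tile, whose completion cost is $O(n\cdot n^{\delta})$ rather than $O(n^{2t+2})$, and whose ``partner'' is another $\cH_2$-tile governed by $N_2$-colors, not the $\cH_1$-partner structure that drives \cref{lemma: fixed star}. The paper's method sidesteps all of this because the color-pairing of $\cH_2$-tiles already delivers the needed savings $n^{\delta j_2/2}$ globally, with no structural leaf-tracking required.
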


\begin{proof}
    Suppose now that only one vertex in the vertex side of size 2 is fixed by $x$ and the $j'$ fixed tiles; that is, $G_{D'}$ is a star with $r'$ leaves. Then there are at most $O(n^{r-r' +1})$ ways to choose the remaining vertices in $G_D$.  Now, we know all graph edges of $G_D$.  There are at most $c_r$ ways to choose to partition the $j_1 - j'$ tiles from $\cH_1$.  Also, observe there are $\left( \frac{d^2}{n} \right)^{\frac{1}{2} j_1^{(1)}}$ ways to choose the tiles from $\cH_1$ that contribute only one edge, and $\left( \frac{d}{n^{i-1}} \right)^{j_1^{(i)}}$ ways to choose the tiles from $\cH_1$ that contribute exactly $i \geq 2$ edges.  Lastly, there are $n^{\frac{1}{2} \delta j_2}$ ways to choose the tiles from $\cH_2$.  Hence, we have 
\begin{align*}
    \Delta_{j',0} \left( \cD_x^{(j_1,j_2)} \right) 
    &\leq (t-1) \cdot c_r \cdot  O \left( n^{r - r' + 1 - \frac{1}{2} j_1^{(1)} - \sum_{i=2}^{2r} (i-1) j_1^{(i)} + \frac{1}{2} \delta j_2} \right) \cdot d^{\sum_{i=1}^{2r} j_1^{(i)}} \\
    &= O \left( n^{r - r' + 1 - \frac{1}{2} j_1^{(1)} - \sum_{i=2}^{2r} (i-1) j_1^{(i)} + \frac{1}{2} \delta j_2} \cdot d^{j_1 - j'} \right).
\end{align*}

Now, observe that 
$$
    O \left( n^{r - r' + 1 - \frac{1}{2} j_1^{(1)} - \sum_{i=2}^{2r} (i-1) j_1^{(i)} + \frac{1}{2} \delta j_2} \cdot d^{j_1 - j'} \right) \leq d^{j_1 - j' - \varepsilon} n^{\delta j_2}
$$
if and only if 
$$
    r - r' + 1 - \frac{1}{2} j_1^{(1)} - \sum_{i=2}^{2r} (i-1) j_1^{(i)} - \frac{1}{2} \delta j_2 < -(2t+1) \varepsilon,
$$
which is true if and only if 
$$
    2r - 2r' + 2 - j_1^{(1)} - \sum_{i=2}^{2r} 2 (i-1) j_1^{(i)} - \delta j_2 < - 2(2t+1) \varepsilon. 
$$

As before, $2r - 2r'$ counts the number of edges in $E(G_D)\setminus E(G_{D'})$, consisting of edges that are not fixed by $x$ nor the $j'$ tiles from $\cH_1$.  Now, $j_2$ counts at least one edge in $G_{D'}$ that is fixed by $x$.  Additionally, since $j' \geq 1$, then $r' \geq 2$.  If $r' > 2$, then there are at least three edges being contributed by the $j_1 - j'$ and $j_2$ tiles to $G_D$ (i.e., not being counted as the $2r - 2r'$ edges), and the inequality holds for our choice of $\delta$.

If $r' = 2$, then $j' = 1$, and further, this one fixed tile contributes only one edge to $G_D$.  Hence, since conflicts correspond to even colorings, there must be another tile of the same color, and we actually only need 
$$
    2r - 2r' - j_1^{(1)} - \sum_{i=2}^{2r} 2 (i-1) j_1^{(i)} - \delta j_2 < - 2(2t+1) \varepsilon. 
$$

Thus, there are at least two edges being contributed by the unfixed tiles, plus the edge $x$, and again this inequality holds, and we have proven \ref{D3} holds in all cases.  
\end{proof}
 
\subsubsection{Proving condition \ref{D4}}

Lastly, we wish to show $|\cD_{x,y}^{(j_1,j_2)}| \leq d^{j_1 - \varepsilon} \delta_P(\cH_2)^{j_2}$.  Similar to above, we do so by counting the number of ways to complete two fixed graph edges $x,y$ to a conflict $D\in  \cD$ with $j_1$ tiles coming from $\cH_1$ and $j_2$ tiles coming from $\cH_2$.  As before, there are $t-1$ ways to choose $r$ for $G_D$.  Letting $j_1^{(i)}$ be the number of tiles from $\cH_1$ which contribute exactly $i$ edges to $G_D$, there are $c_r$ ways to choose the partition of $j_1$, and once all vertices (and hence edges) of $G_D$, $O \left( \frac{d^2}{n} \right)^\frac{j_1^{(1)}}{2}$ ways to choose the tiles contributing exactly one edge to $G_D$ and $O \left( \frac{d}{n^{i-1}} \right)^{j_1^{(i)}}$ ways to choose the tiles contributing exactly $i$ edges to $G_D$ for $2 \leq i \leq 2r$.  

Note that $x,y$ are distinct edges in $K_{n,n}$, so at least three vertices of $G_D$ are fixed, and we must choose at most $r - 1$ more vertices.  Together with the above, this implies
$$
    |\cD_{x,y}^{(j_1,j_2)}|
    \leq O \left( n^{r - 1 - \frac{1}{2} j_1^{(1)} - \sum_{i=2}^{2r} (i-1) j_1^{(i)} + \frac{1}{2} \delta j_2} \cdot d^{j_1} \right).
$$

Hence, the inequality $|\cD_{x,y}^{(j_1,j_2)}| \leq d^{j_1 - \varepsilon} \delta_P(\cH_2)^{j_2}$ holds if 
$$
    2r - 2 - \delta j_2 - j_1^{(1)} - \sum_{i=2}^{2r} (i-1) j_1^{(i)} < -2 (2t+1) \varepsilon.
$$

Observe that $2r - 2$ counts the number of unfixed edges in $G_D$ with which we started (the only two fixed edges are $x,y$).  However, $j_2$ counts both $x,y$ in the number of edges contributed by the $j_2$ tiles, so for our particular choice of $\delta$, the inequality holds.  Hence, condition \ref{D4} is satisfied.  Thus, $\cD$ is a $(d, O(1), \varepsilon)$-simply-bounded conflict system for $\cH$ for all $\varepsilon \in \left(0, \frac{1}{2t+2} \right)$.

By \cref{thm:blackbox}, there exists a $P$-perfect matching $\cM \subseteq \cH$ which contains none of the conflicts from $\cC \cup \cD$, and by \cref{claim: matching is a good coloring}, $\cM$ corresponds to a well-defined edge-coloring of all the edges of $K_{n,n}$, using $|N_1| + |N_2|$ colors in which every copy of $K_{2,t}$ has an odd color class.

Hence, the upper bound in \cref{thm: main_graph} holds.

\section{Proof of Theorem \ref{thm: main_hypergraph}}\label{sec:hyp}

The lower bound in Theorem~\ref{thm: main_hypergraph} follows the same logic as the lower bound in \cref{thm: main_graph} by restricting our attention to the common neighborhood of some fixed $k-2$ vertices, one from each of the first $k-2$ partite sets, and then repeating the argument above with the remaining two partite sets.

Let $N_1$ and $N_2$ be disjoint sets of $\lceil \frac{n}{2}\rceil$ and $\lceil n^\delta \rceil$ colors, respectively,  where $\delta<1$ will be determined later in the proof. To prove the upper bound in Theorem \ref{thm: main_hypergraph}, we will construct an edge-coloring of $\cK_{n, \dots ,n}^{(k)}$ with $|N_1|+|N_2|$ colors such that any copy of $\cK_{1,\dots,1,2,2}$ has an odd color class. Let $\cK_{n,\dots, n}^{(k)}$ have parts $X_1, X_2, \dots, X_k$.

For $i\in [k]$, let $A_i$ be the collection of $i$-vertex sets (or an $i$-tuple) such that in any $i$-tuple, there is at most one vertex from each $X_j$ for each $j\in [k]$. We now define a hypergraph $\mathcal{H}_1$ with vertex set $A\cup B$, where $A = \left\{a \cup \{u\}: a\in A_{k-1}, u\in V(\cK_{n, \dots ,n}^{(k)})\setminus a, \right\}$ and $B = \left\{\{i\} \cup a : i\in N_1, a\in A_{k-1} \right\}$. Note that $A$ includes all the $k$-tuples in $A_k$ when $u$ is in a different vertex side from all the vertices in $a\in A_{k-1}$.

For every copy $S$ of $\cK_{k+1,\dots,k+1}^{(k)}$ in $\cK_{n,\dots,n}^{(k)}$, where $S\cap X_j = \left\{x_1^{(j)},\dots,x_{k+1}^{(j)}\right\}$ for all $j\in [k]$, and for each color $i\in N_1$ we include the hyperedge $e_{S,i}$ in $\mathcal{H}_1$ where 
\begin{align*}
    e_{S,i} = &\{a\in A : \text{$a \subseteq V(S)$ and $a$ is a transversal of $S$}\} \\
    \cup &\{b\in B: b\subseteq V(S)\cup \{i\} \text{ and $b\setminus \{i\}$ is a transversal of $S$}\}.
\end{align*}

Note that a set of vertices $\left\{x^{(j_1)}_{\ell_1},\dots,x^{(j_t)}_{\ell_t} \right\}$ is a transversal if and only if all the $\ell_j$'s are distinct. 

\begin{figure}[h!]
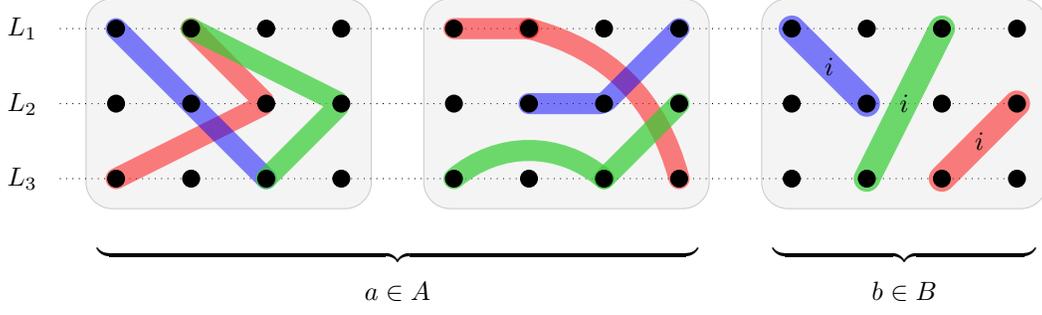

    \centering
    \includestandalone[width=0.9\textwidth]{tiles-example}
    \caption{An example when $k=3$ of the three different types of vertices in $\mathcal{H}_1$. For each type, three specific vertices are given (red, blue, and green). Let $L_i$ denote the corresponding vertices in the vertex side $X_i$ of $\cK_{n,n,...,n}^{(k)}$.}
    \label{fig: ex-tile-thm2}
\end{figure}

Observe that $\cH_1$ is a $\left( (k+1)! + k \binom{k+1}{2} (k-1)(k-1)! + k\frac{(k+1)!}{2} \right)$-uniform hypergraph. To see this, observe that one can choose edges in $A_k$ in $(k+1)!$ ways. To choose the edges in $A \setminus A_{k}$, there are $k$ ways to choose the vertex side containing two vertices, $k-1$ ways to choose the vertex side containing no vertices, $\binom{k+1}{2}$ ways to choose the two vertices in this part, and $(k-1)!$ ways to choose the rest of the vertices. Lastly, to choose the vertices in $B$, one can choose the part that is omitted in $k$ ways, and then choose the remaining vertices in $\frac{(k+1)!}{2}$ ways. 

For the rest of the proof, we refer to the edges of $\cH_1$ as \emph{tiles}.  Hence, when we refer to an \emph{edge}, we are referring to an edge of the hypergraph $\cK_{n, \dots, n}^{(k)}$.  When we refer to a \emph{tile}, we are referring to an edge of the hypergraph $\cH_1$.  For clarity, to match the setup for the conflict-free hypergraph matching method, define $P := A$ and $Q := B$.  Let $\varepsilon_0 = \frac{1}{k^2+2}$, and define 
$$
    \max \left\{ 1 - \frac{k^2 + 1}{2(k^2 + 2)^4}, \frac{k^2 + 1}{k^2 + 2} , \frac{1}{2} \left(1 + \frac{k^2+1}{k^2+2}\right) \right\} < \delta  < 1.
$$

Observe that such a $\delta$ does indeed exist.

\begin{claim}
The hypergraph $\cH_1$ is essentially $d$-regular for $d = \frac{n^{k^2+1}}{2}$, thus satisfying \ref{H1}.
\end{claim}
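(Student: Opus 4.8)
The plan is to compute $\deg_{\cH_1}(u)$ for every $u\in V(\cH_1)=A\cup B$ and show that it always lies in $[\,d-O(n^{k^2}),\,d\,]$ when $u\in A=P$ and is at most $d$ when $u\in B=Q$. Since $\varepsilon<\varepsilon_0=\tfrac{1}{k^2+2}<\tfrac{1}{k^2+1}$, we have $n^{k^2}=o(d^{1-\varepsilon})$, so $d-O(n^{k^2})\ge (1-d^{-\varepsilon})d$ once $n$ is large; together with $\Delta(\cH_1)\le d$ this is exactly \ref{H1}. There are only three kinds of vertices to treat: (i) the transversal $k$-tuples $u\in A_k$, with one vertex in each part; (ii) the vertices $u\in A\setminus A_k$, consisting of two vertices in one part, one vertex in each of $k-2$ further parts, and none in the remaining part; and (iii) the vertices $u=\{i\}\cup a\in B$, with $i\in N_1$ and $a$ a $(k-1)$-transversal.

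For each type, $\deg_{\cH_1}(u)$ factors as the number of admissible colors times the number of ways to complete the vertices of $u$ to a copy $S$ of $\cK^{(k)}_{k+1,\dots,k+1}$ in which $u$ occurs inside $e_{S,i}$ as, respectively, a transversal edge, a within-part non-edge element, or a $B$-element. For a type-(i) vertex the color is free ($\lceil n/2\rceil$ choices) and each of the $k$ parts already contains one vertex of $u$, so the completion contributes $(n-1)(n-2)\cdots(n-k)$ per part, giving leading term $n^{k^2}\cdot\tfrac n2=d$. For a type-(ii) vertex the color is again free, the doubled part contributes $(n-2)(n-3)\cdots(n-k)$, the empty part contributes $n(n-1)\cdots(n-k)$, and each of the $k-2$ singleton parts contributes $(n-1)(n-2)\cdots(n-k)$, so the leading term is $n^{(k-1)+(k+1)+k(k-2)}\cdot\tfrac n2=n^{k^2}\cdot\tfrac n2=d$. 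For a type-(iii) vertex the color is forced, but the empty part is now completely free (contributing $n(n-1)\cdots(n-k)$) while the $k-1$ occupied parts each contribute $(n-1)(n-2)\cdots(n-k)$, and the number of ways a $(k-1)$-transversal sits inside a copy, relative to that copy's internal labeling, supplies the missing factor $\tfrac12$; the leading term is again $n^{k(k-1)+(k+1)}\cdot\tfrac12=n^{k^2}\cdot\tfrac n2=d$. In cases (i)–(ii) the color contributes $\lceil n/2\rceil=\tfrac n2+O(1)$, and for $k\ge 2$ the negative $O(1/n)$ corrections from the falling factorials outweigh this $O(1/n)$ gain, while in case (iii) there is no such gain at all; hence $\deg_{\cH_1}(u)=d-O(n^{k^2})$ in all cases, and in particular $\deg_{\cH_1}(u)\le d$.

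The arithmetic is routine once one fixes the correct convention for what a ``copy $S$ of $\cK^{(k)}_{k+1,\dots,k+1}$'' is — namely the one under which $e_{S,i}$, and hence the notion of a transversal of $S$, are well defined and under which $\cH_1$ has the stated uniformity $(k+1)!+k\binom{k+1}{2}(k-1)(k-1)!+k\tfrac{(k+1)!}{2}$; one can also double-check every degree computation against a global incidence count (number of tiles times per-tile multiplicity, divided by the number of vertices of the relevant type). The only mildly surprising feature is that the $B$-vertices of type (iii) also have degree $(1-o(1))d$ rather than something smaller: losing the factor $\sim\tfrac n2$ from the forced color is exactly compensated by the factor $\sim n$ gained from the now-free empty part, with the residual constant $\tfrac12$ coming from the internal structure of a copy. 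Verifying that this balance, together with the exponent $k^2+1$, comes out right in each of the three cases — and that the subleading terms are negative and of order $n^{k^2}=o(d^{1-\varepsilon})$ — is essentially the whole content of the proof.
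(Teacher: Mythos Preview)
Your proof is correct and follows essentially the same approach as the paper: both compute $\deg_{\cH_1}(u)$ for each of the three vertex types and verify the answer is $d-O(n^{k^2})$. You organize the count part-by-part (falling factorials in each $X_j$) whereas the paper organizes the type-(i) count transversal-by-transversal, but these are equivalent. Your explanation of the residual factor $\tfrac12$ in type~(iii) is a bit terse, but correct---it is precisely the $(k+1)!/2$ label choices for the $k-1$ vertices of $a$, divided by the $(k+1)!$ simultaneous-relabeling symmetry of a copy; the paper obtains the same $\tfrac12$ via the $\binom{n}{2}$-type choice of the two ``completing'' vertices. One point where you are actually more careful than the paper: you explicitly argue that the $+O(1)$ from $\lceil n/2\rceil$ is dominated by the negative corrections from the falling factorials (for $k\ge 2$), so that $\deg_{\cH_1}(u)\le d$ holds exactly; the paper silently writes ``$\tfrac n2$ ways to choose the color'' and does not address this.
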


\begin{proof}
We check that for any vertex $v \in V(\cH_1)$, we have $d - O(n^{k^2+1}) \leq \deg_{\cH_1}(v) \leq d$.  

First, suppose $v \in A$ and $v$ is of the form $\left\{x_{\ell_1}^{(1)},x_{\ell_2}^{(2)},\dots,x_{\ell_k}^{(k)}\right\}$ (see left of \cref{fig: ex-tile-thm2}).  Then there are $\frac{n^{k^2+1}}{2} - O(n^{k^2})$ many tiles in $\cH_1$ containing $a$. Indeed, to choose another transversal edge that does not intersect $v$, there are $n-1$ ways to choose each of the $k$ vertices in that transversal. Continuing to choose disjoint transversal edges in this way, there are $n-j$ ways to choose each vertex in the $j$-th disjoint transversal. Since there are $k$ disjoint transversals, there are $n^{k^2} - O(n^{k^2 - 1})$ ways to choose all disjoint transversal edges, which now define the remaining edges of $S$. Lastly, we choose a color in $\frac{n}{2}$ ways, which gives the desired bound.

Next, suppose $v \in A$ and $v$ is of the form $v=\left\{x^{(j_1)}_{\ell_1},x^{(j_1)}_{\ell_2}, x^{(j_2)}_{\ell_3}, \dots, x^{(j_{k-1})}_{\ell_{k}}\right\}$ (see middle of \cref{fig: ex-tile-thm2}), then there are $(n-2)^{k-1} - O(n^{k-2})$ ways to pick the remaining vertices from $X_{j_1}$, $(n-1)^{k(k-2)} - O(n^{k(k-2)-1})$ ways to pick the remaining vertices from parts $j_2, \dots, j_{k-1}$, $n^{k+1} - O(n^k)$ ways to choose the remaining vertices from the remaining part, and $\frac{n}{2}$ ways to choose the color of the tile.  

Lastly, if $v\in B$ is of the form $v = \left\{x^{(j_1)}_{\ell_1},  x^{(j_2)}_{\ell_2}, \dots, x^{(j_{k-1})}_{\ell_{k-1}}, i \right\}$ (see right of \cref{fig: ex-tile-thm2}), then there are $\binom{n-1}{2}$ ways to choose the two vertices which each complete a transversal with $x^{(j_1)}_{\ell_1},  x^{(j_2)}_{\ell_2}, \dots, x^{(j_{k-1})}_{\ell_{k-1}}$, and $\left( (n-1)^{k(k-1)} - O(n^{k(k-1)-1}) \right) \cdot \left((n-2)^{k-1} - O(n^{k-2}) \right) = n^{k^2 -1} - O(n^{k^2-2})$ ways to choose the remaining vertices. 

Thus, in each case, we have $\deg_{\cH_1}(v) = \frac{1}{2} n^{k^2 + 1} - O(n^{k^2})$.
\end{proof}

Note that for $\varepsilon = \frac{1}{k^2 + 2}$ and $n$ sufficiently large, 
$$
    d^\varepsilon
    = O \left( n^{\frac{k^2 + 1}{k^2 + 2}} \right)
    \leq O(n^k) = |P| \leq |P \cup Q| 
    \leq O(n^k)
    \leq \exp \left( c_k n^{\frac{k^2 + 1}{(k^2 + 2)^3}} \right)
    = \exp(d^{\varepsilon^3}),
$$
where $c_k = 2^{\frac{-1}{(k^2 + 2)^3}}$.

Furthermore, we have $\Delta_2(\cH_1) \leq d^{1-\varepsilon} $ for all $\varepsilon \in \left( 0,\frac{1}{k^2+2} \right)$, since each pair of vertices $u,v \in V(\cH_1)$ is contained in at most $O(n^{k^2})$ tiles in $\cH_1$.  To see this, given, $u,v \in V(\cH_1)$, if $u$ and $v$ appear in a tile $e_{S,i}$ of $\cH_1$ together, then either at least $k+1$ vertices of a copy of $S$ are given, or at least $k$ vertices and a color of the copy of $S$ are given.  In the first case, there are at most $O(n^{k^2})$ ways to choose the remaining $k^2 - 1$ vertices and a color of $S$.  In the second case, there are at most $O(n^{k^2})$ ways to choose the remaining $k^2$ vertices of $S$.  Hence, there are at most $O(n^{k^2})$ tiles containing both $u$ and $v$, satisfying \ref{H2}.  

Let $\cH_2$ be the $(k+1)$-uniform hypergraph with vertex set $A \cup C$, where $C= \left\{a\cup \{i\}: a\in A_{k-1}, i\in N_2 \right\}$ and for each $a\in A_k$ and $i\in N_2$ we define the edge

\[e_{a,i}=\{a\}\cup \{a'\cup \{i\}:a'\in A_{k-1}, a'\subset a\}.\]

That is, $e_{i}$ consists of one vertex from $A_k$ and $k$ vertices from $C$, and corresponds to coloring one edge in $\cK^{(k)}_{n, \dots, n}$.  Define $R := C$ and $\cH := \cH_1 \cup \cH_2$.  

We have $\Delta_R(\cH_2) \leq d^{\varepsilon^4} \delta_P(\cH_2)$, since $\Delta_R(\cH_2) = n$ (vertices in $R$ are $k$-tuples consisting of $k-1$ vertices of $\cK^{(k)}_{n, \dots,n}$ and a color from $N_2$, so there are $n$ ways to choose the final vertex in the edge of $\cK^{(k)}_{n, \dots,n}$ to be colored) and $\delta_P(\cH_2) = \left\lceil n^{\delta} \right\rceil$ (since there are $\left\lceil n^\delta \right \rceil$ ways to choose $i$).  One can verify that $n < O(n^{(k^2 + 1) \varepsilon^4 + \delta})$. Hence, \ref{H3} is satisfied.  

Furthermore, for any $T \in P$ and $S \in R$, we have $d(T,S) \leq d^{-\varepsilon} n^{\delta}$ since $d(T,S) = 0$ if $T$ is not an edge in $\cK^{(k)}_{n,\dots ,n}$ (and hence not in any tile of $\cH$ along with $S$), and $d(T,S) = 1$ if $T$ is an edge in $\cK^{(k)}_{n,\dots ,n}$ (since the edge of $\cK^{(k)}_{n, \dots ,n}$ and its color are both fixed by $S,T$).  Hence, \ref{H4} is satisfied for our particular choice of $\delta$.  

Next, we define a conflict system $\cC$ for $\cH_1$ with edges of sizes 3 and 4.

\subsection{The conflict system $\cC$} \label{sec:hypC}

    Let $V(\cC) = E(\cH_1)$, and let the edges of $\cC$ correspond to copies of $\cK_{1, \dots, 1,2,2}$ in $\cK_{n, \dots ,n}^{(k)}$ colored as in \cref{fig:bad_config}.  Note that the graph in \cref{fig:bad_config} is colored in the only way that is allowed by a matching of $\cH_1$, yet still having no odd color class.  We will call these edges of $\cC$ \emph{conflicts}, and they correspond to sets of three or four monochromatic copies $S$, $S'$, $T$, and $T'$ which are each the transversals of a copy of $\cK_{k+1,\dots,k+1}^{(k)}$ in $\cK_{n,\dots,n}^{(k)}$.  In particular, we have $\{e_{S,i},e_{S',i}, e_{T,j}, e_{T',j}\} \in E(\cC)$ for each set of vertices $x_1 \in X_{j_1}, \dots,$ $x_{k-2} \in X_{j_{k-2}}$, $x_{k-1}, x'_{k-1} \in X_{j_{k-1}}$, and $x_{k}, x'_{k} \in X_{j_{k}}$, distinct colors $i,j \in N_1$, and (not necessarily distinct) tiles $e_{S,i}, e_{S',i}, e_{T,j}, e_{T',j} \in E(\cH_1)$ with 
    \begin{align*}
    \left\{x_1, \dots, x_{k-2}, x_{k-1}, x_k\right\} &\in e_{S,i},\\
    \left\{x_1, \dots, x_{k-2}, x'_{k-1}, x'_k\right\} &\in e_{S',i},\\
    \left\{x_1, \dots, x_{k-2}, x'_{k-1}, x_k\right\} &\in e_{T,j},  \\
    \text{and }\left\{x_1, \dots, x_{k-2}, x_{k-1}, x'_k\right\} &\in e_{T',j}.
    \end{align*}
    Note that we do not need to consider conflicts of size two: if we did, then $e_{S,i} = e_{S',i}$ and $e_{T,j} = e_{T',j}$, but then $\{x_1, \dots, x_{k-2}, x_{k-1}, x'_{k-1}\}$ appears in both $e_{S,i}$ and $e_{T,j}$, and thus $e_{S,i}$ and $e_{T,j}$ cannot be in a matching of $\cH_1$.  Hence, \ref{C1} is satisfied.

    \begin{figure}[h!]
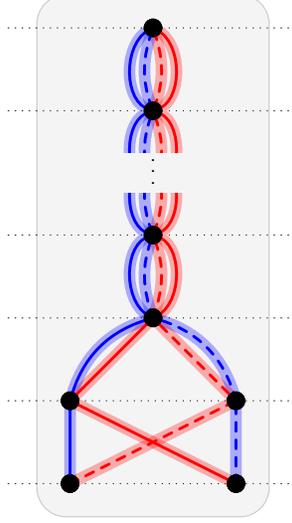

        \centering
        \includestandalone[width = .25\textwidth]{hypergraph-conflict}
        \caption{An example of the only bad coloring of $\cK_{1, \ldots ,1, 2, 2}$ in $\cK_{n,\dots,n}^{(k)}$ whose hyperedges come from tiles which could appear in a matching of $\cH$. }
        \label{fig:bad_config}
    \end{figure}

    We will now show that $\Delta(\cC^{(3)}) = O(d^2)$ and $\Delta(\cC^{(4)}) = O(d^3)$.  To this end, fix $e_{S,i} \in V(\cC)$.  Without loss of generality, any conflict in $\cC^{(3)}$ has the form $\{e_{S,i}, e_{T,j}, e_{T',j}\}$ or $\{e_{S,i}, e_{S',i}, e_{T,j}\}$.  If the conflict has the form $\{e_{S,i}, e_{T,j}, e_{T',j}\}$, then there are $O( d)$ ways to choose $e_{T,j}$ and $O(d)$ ways to choose $e_{T',j}$.  If the conflict has the form $\{e_{S,i}, e_{S',i}, e_{T,j}\}$, we claim there are $O(n \cdot d)$ ways to choose $e_{S',i}$. Note that there are $k-2$ vertices in an edge of $S'$, and we choose another vertex in $n$ ways. Now, we know a $k$-tuple in $e_{S',i}$, which consists of the $k-1$ vertices along with color $i$, so we can choose $e_{S',i}$ in $d$ ways. Finally, we can choose $e_{T,j}$ in $O(\frac{d}{n})$ ways. Indeed, since we know at least two $k$-tuples now in $e_{T,j}$, we can choose $e_{T,j}$ in $\Delta_2 (\cH_1) = O(\frac{d}{n})$ ways.  In either case, $\deg_\cC(e_{S,i}) = O(d^2)$, proving $\Delta(\cC^{(3)}) = O(d^2)$.  Similarly, any conflict in $\cC^{(4)}$ containing $e_{S,i}$ must have the form $\{e_{S,i}, e_{S',i}, e_{T,j}, e_{T',j}\}$.  Hence, there are $O(n \cdot d)$ ways to choose $e_{S,i}$ (since there are $O(n)$ ways to choose a vertex from $\cK_{n,\dots,n}^{(k)}$ to be in a $k$-tuple with color $i$ in $e_{S',i}$, and then we know a vertex of the form $\{x_1, x_2, \dots, x_{k-1},i\}\in e_{S',i}$ and $\deg_{\cH_1}(e_{S',i}) = O(d)$), and there are $O(d)$ ways to choose $e_{T,j}$ and $O \left(\frac{d}{n} \right)$ ways to choose $e_{T',j}$.  Thus, $\deg_\cC(e_{S,i}) = O(d^3)$, proving $\Delta(\cC^{(4)}) = O(d^3)$.  Hence, \ref{C2} is satisfied.

    Next, we show $\Delta_{j'}(\cC^{(j)}) \leq d^{j-j'-\varepsilon}$ for all $j \in [3,4]$ and $j' \in [2,j-1]$ for all $\varepsilon \in \left( 0,\frac{1}{k^2+2} \right)$. Indeed, the number of conflicts in $\cC^{(3)}$ containing two fixed tiles is $O(d^{1-\varepsilon})$.  To see this, a conflict in $\cC^{(3)}$ has the form $\{e_{S,i}, e_{S',i}, e_{T,j}\}$.  If $e_{S,i}, e_{S',i}$ are the fixed tiles, then we know two $k$-tuples of the form $\{x_1,\dots,x_k\}$ and $\{x'_1,\dots,x'_k\}$ in $e_{T,j}$.  Hence, there are at most $\Delta_2(\cH_1) = O \left( \frac{d}{n} \right) \leq d^{1-\varepsilon}$ ways to choose $e_{T,j}$ to be in the conflict.  If, without loss of generality, $e_{S,i}$ and $e_{T,j}$ are fixed, then we know two $k$-tuples in $e_{S',i}$, and there are $O\left(\frac{d}{n} \right)$ ways to choose $e_{S',i}$ to be in the conflict.  

    Similarly, the number of conflicts in $\cC^{(4)}$ containing two fixed vertices is $O(d^{2-\varepsilon})$.  Observe that a conflict in $\cC^{(4)}$ has the form $\{e_{S,i}, e_{S',i}, e_{T,j}, e_{T',j}\}$.  If $e_{S,i}$ and $e_{S',i}$ are fixed, then there are $O(d)$ ways to choose $e_{T,j}$ and $O\left(\frac{d}{n}\right)$ ways to choose $e_{T',j}$.  If $e_{S,i}$ and $e_{T,j}$ are fixed, then there are $O(d)$ ways to choose $e_{S',i}$ (since we know a $k$-tuple of the form $\left\{x_1, \dots, x_{k-1},i\right\}$ in $e_{S',i}$), and $O\left(\frac{d}{n}\right)$ ways to choose $e_{T',j}$ (since we know two $k$-tuples, one of the form $\left\{x_1,\dots, x_k\right\}$ and one of the form $\left\{x_1, \dots, x_{k-1},j\right\}$, in $e_{T',j}$).  Lastly, the number of conflicts in $\cC^{(4)}$ containing three fixed vertices is $O(d^{1-\varepsilon})$.  Without loss of generality, suppose $e_{S,i}$, $e_{S',i}$, and $e_{T,j}$ are fixed.  Then there are $O\left(\frac{d}{n}\right)$ ways to choose $e_{T',j}$ to form the conflict since we know two $k$-tuples, of the form $\{x_1, \dots, x_k\}$ and $\{x_1, \dots, x_{k-1}, j\}$, in $e_{T',j}$.  Hence, \ref{C3} is satisfied.  

    Thus, $\cC$ is a $(d,O(1), \varepsilon)$-bounded conflict system for $\cH_1$ for all $\varepsilon \in \left( 0, \frac{1}{k^2+2} \right)$.

\subsection{The conflict system $\cD$} \label{sec:hypD}

    Let $V(\cD) = E(\cH)$, and let the edges of $\cD$ correspond to copies of $\cK_{1,\dots, 1,2,2}$ in $\cK_{n,\dots,n}^{(k)}$ colored as in \cref{fig:bad_config}.  Note that the coloring of the graph in \cref{fig:bad_config} is again the only coloring of $\cK_{1,\dots,1,2,2}$ which has no odd color class since we are using a matching of $\cH = \cH_1 \cup \cH_2$ to color $\cK^{(k)}_{n,\dots, n}$.  To see this, observe that any two edges corresponding to two tiles in $\cH_2$ cannot overlap in $k-1$ graph vertices, since then those edges would not be in a matching of $\cH_2$ (and thus not in a matching of $\cH$).  Again, we will call these edges of $\cD$ \emph{conflicts}, and without loss of generality, they correspond to sets of one or two monochromatic copies $S, S'$ which are each transversals of a copy of $\cK_{k+1,\dots,k+1}^{(k)}$ in $\cK_{n,\dots,n}^{(k)}$, and two or four copies $T_1, T_2, T_3, T_4$ which are each edges in $\cK_{n,\dots,n}^{(k)}$. A conflict of the first type with only two tiles coming from $\cH_2$ we call \emph{type 1}, and a conflict of the second type with four tiles coming from $\cH_2$ we call \emph{type 2}.  Note that type 1 conflicts can be of size three or four, while type 2 conflicts are of size four. See \cref{fig:bad_config2} for an example.

    \begin{figure}[h!]
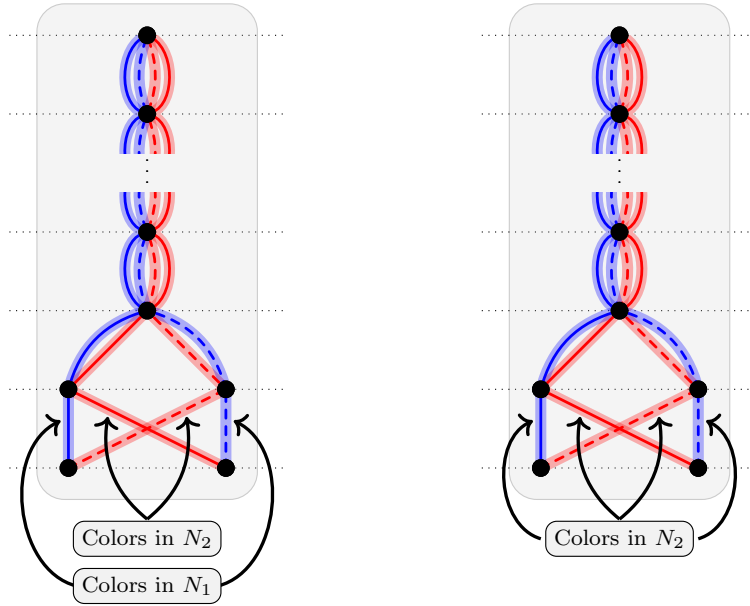

        \centering

    \includestandalone[width = .65\textwidth]{hypergraph-conflict-D}
        \caption{The bad colorings of $\cK_{1, \dots, 1, 2, 2}$ in $\cK_{n,n,...,n}^{(k)}$ which correspond with the different types of conflicts in $\mathcal{D}$ (type 1 on the left and type 2 on the right). Note that two edges of the same color from $N_1$ can come from the same tile in $\cH_1$.}
        \label{fig:bad_config2}
    \end{figure}

    Our above definition of the conflicts in $\cD$ implies that any conflict has at least two, and no more than four, tiles in $\cH_2$.  Hence, the property \ref{D1} is satisfied.  

    We next show $\left| \cD_x^{(j_1,j_2)} \right| \leq d^{j_1 + \varepsilon^4} \delta_P(\cH_2)^{j_2}$ for all $j_1 \in [0,O(1)], j_2 \in [2,O(1)]$.  To this end, first, observe that $\delta_P(\cH_2) = n^\delta$.  To see this, a vertex in $P$ that is also in $V(\cH_2)$ is an edge $\{x_1, x_2, \dots, x_k\}$ of $\cK_{n,\dots,n}^{(k)}$, and there are $n^\delta$ ways to choose a color for the edge; tiles in $\cH_2$ correspond to colored edges, so the observation follows.  Now, consider $\cD_x^{(1,2)}$, which contains conflicts only of type 1.  If $x \in P$ is in some tile in $\cH_2$ that is in a conflict in $\cD$, then there are $\frac{n}{2}$ ways to choose the color of the tile from $\cH_1$, then $d$ ways to choose that tile, and $n^\delta$ ways to choose the color of $x$ and the other tile in the conflict.  Hence, $\left| \cD_x^{(1,2)} \right| \leq O(n^{1+\delta} d) \leq d^{1+\varepsilon^4} n^{2\delta}$ by our choice of $\varepsilon$ and $\delta$ (since $1 - \frac{k^2 + 1}{(k^2 + 2)^4} < \delta)$.  Next, consider $\cD_x^{(2,2)}$, which again contains conflicts only of type 1.  Again, we fix $x \in P$ in some tile in $\cH_2$ that is in a conflict in $\cD$.  There are $O(n)$ ways to choose the color of the two tiles coming from $\cH_1$, then $d$ ways to choose the first tile coming from $\cH_1$, then $d$ ways to choose the second tile coming from $\cH_1$, and $n^\delta$ ways to choose the color of the tiles from $\cH_2$ (all edges in the tiles from $\cH_2$ are fixed by the tiles from $\cH_1$).  Hence, $\left| \cD_x^{(2,2)} \right| \leq O(n^{1+\delta} d^2) \leq d^{2+\varepsilon^4} n^{2\delta}$ by our choice of $\varepsilon$ and $\delta$ (since $1 - \frac{k^2 + 1}{(k^2 + 2)^4} < \delta)$.  Now, consider $\cD_x^{(0,4)}$, which contains only conflicts of type 2.  Note there are $O(n^2)$ ways to choose the two remaining graph vertices and $(n^{\delta})^2$ ways to choose the two colors of the tiles.  Hence, $\left| \cD_x^{(0,4)} \right| \leq O(n^{2 + 2\delta}) \leq d^{\varepsilon^4} n^{4\delta}$ by our choice of $\varepsilon$ and $\delta$ (since $1 - \frac{k^2 + 1}{2(k^2 + 2)^4} < \delta)$.  Finally, note that for all other $j_1, j_2$, $\cD_x^{(j_1,j_2)}$ is empty and hence the bound holds trivially.  Thus, \ref{D2} is satisfied.  

    Next, we will show $\Delta_{j',0} \left(\cD_x^{(j_1,j_2)} \right) \leq d^{j_1 - j' - \varepsilon} \delta_P(\cH_2)^{j_2}$ for each $0 \leq j' \leq j_1$.  Note that in our case, as we observed above, $j_1 \in \{ 0, 1, 2 \}$.  If $j_1 = 0$, then $j' = 0$, and $\Delta_{0,0}\left(\cD_x^{(0,4)} \right)$ = 0 since in $\cD_x^{(0,4)}$, there are no conflicts that contain zero tiles from $\cH_1$ and zero tiles from $\cH_2$.  If $j_1 = 1$, then $j_2 = 2$ and $j' = 0$ or $j' = 1$.  Note that again we have $\Delta_{0,0} \left(\cD_x^{(1,2)} \right) = 0$.  We also have $\Delta_{1,0} \left(\cD_x^{(1,2)} \right) = O(n^\delta)$ (since for a fixed tile from $\cH_1$ in a conflict in $\cD_x^{(1,2)}$, there are $n^\delta$ ways to choose the color of the tiles from $\cH_2$ in the conflict, but the graph edges in the tile are already determined by the fixed tile from $\cH_1$).  Hence, $\Delta_{1,0}\left(\cD_x^{(1,2)}\right) = O(n^\delta) \leq d^{-\varepsilon} n^{2\delta}$ for our particular choice of $\varepsilon$ and $\delta$ (since $\frac{k^2+1}{k^2+2} < \delta$).  If $j_1 = 2$, then $j_2 = 2$ again, and $j' \in \{0,1,2\}$.  We have $\Delta_{0,0}\left(\cD_x^{(2,2)}\right) = 0$.  Similarly, we have $\Delta_{1,0} \left(\cD_x^{(2,2)} \right) = O(d \cdot n^\delta)$ since for a given tile in $\cD_x^{(2,2)}$ from $\cH_1$, it must appear in a conflict with an $\cH_2$ tile containing $x$, so there are $d$ ways to choose the other $\cH_1$ tile and $n^\delta$ ways to choose the color of the $\cH_2$ tiles (the graph edges corresponding to the $\cH_2$ tiles are fixed by the $\cH_1$ tiles and $x \in P$).  Hence, $\Delta_{1,0}\left(\cD_x^{(2,2)}\right) = O(d \cdot n^\delta) \leq d^{1 -\varepsilon} n^{2\delta}$ for our particular choice of $\varepsilon$ and $\delta$ (since $\frac{k^2+1}{k^2+2} < \delta$).  Lastly, we have $\Delta_{2,0}\left(\cD_x^{(2,2)}\right) = O(n^\delta)$ since for two given tiles in $\cD_x^{(2,2)}$ from $\cH_1$ to appear together in a conflict, there are $n^\delta$ ways to choose the color of the $\cH_2$ tiles (the graph edges corresponding to the $\cH_2$ tiles are fixed by the $\cH_1$ tiles).  Hence, $\Delta_{2,0}\left(\cD_x^{(2,2)}\right) = O( n^\delta) \leq d^{-\varepsilon} n^{2\delta}$ for our particular choice of $\varepsilon$ and $\delta$ (since $\frac{k^2+1}{k^2+2} < \delta$).  Thus, \ref{D3} is satisfied.  

    Finally, we will show $\left|\cD_{x,y}^{(j_1,j_2)}\right| \leq d^{j_1 - \varepsilon} \delta_P(\cH_2)^{j_2}$ for $x,y \in P$ in the $\cH_2$ part of the conflicts from $\cD$.  First observe that $\left|\cD_{x,y}^{(0,4)}\right| = O(n^{1+2\delta})$ since for any fixed $x,y$, there are $O(n)$ ways to choose the last graph vertex in a graph edge that corresponds to the other two $\cH_2$ tiles in a conflict, and $n^{2\delta}$ ways to choose the colors of the tiles.  Hence, $\left|\cD_{x,y}^{(0,4)}\right| = O(n^{1+2\delta}) \leq d^{-\varepsilon} n^{4\delta}$ since $\frac{1}{2} (1 + \frac{k^2+1}{k^2+2}) < \delta$.  Next observe that $\left|\cD_{x,y}^{(1,2)}\right| = O\left( \frac{d}{n} \cdot n^{\delta}\right)$ since for any fixed $x,y$, there are $O\left(\frac{d}{n}\right)$ ways to choose the tile from $\cH_1$ (we know two graph edges in the tile, they are fixed by $x,y$), and $n^{\delta}$ ways to choose the color of the $\cH_2$ tiles.  Hence, $\left|\cD_{x,y}^{(1,2)}\right| = O\left(\frac{d}{n}\right) \leq d^{1-\varepsilon} n^{2\delta}$ since $\frac{1}{2} \left( \frac{k^2+1}{k^2+2} - 1 \right) < \delta$.  Lastly, observe that $\left|\cD_{x,y}^{(2,2)}\right| = O\left( \frac{d^2}{n}  \cdot n^{\delta}\right)$ since for any fixed $x,y$, there are $O\left(d \cdot \frac{d}{n}\right)$ ways to choose the tiles from $\cH_1$ (we know a graph edge in one tile, it is fixed by $x,y$, and after choosing one tile from $\cH_1$, we know a graph edge and a color of the last tile), and $n^{\delta}$ ways to choose the color of the $\cH_2$ tiles.  Hence, $\left|\cD_{x,y}^{(2,2)}\right| = O\left(\frac{d^2}{n} \cdot n^\delta \right) \leq d^{2-\varepsilon} n^{2\delta}$ since $\frac{k^2+1}{k^2+2} - 1 < \delta$.  Thus, \ref{D4} is satisfied.  Thus, $\cD$ is a $(d,O(1),\varepsilon)$-simply-bounded conflict system for $\cH$ for all $\varepsilon \in \left(0,\frac{1}{k^2 + 2} \right)$.

    By \cref{thm:blackbox}, there exists a $P$-perfect matching $\cM \subseteq \cH$ which contains none of the conflicts from $\cC \cup \cD$.  This matching corresponds to a well-defined edge-coloring of all the edges of $\cK^{(k)}_{n,\dots,n}$, using $N_1+N_2=\lceil\frac{n}{2}\rceil + \lceil n^\delta\rceil$ colors, in which every copy of $\cK_{1,\dots,1,2,2}$ has an odd color class.  Hence, the upper bound in \cref{thm: main_hypergraph} holds.

\section{Acknowledgments}
This project began during the 2024 Graduate Research Workshop in Combinatorics, which was supported by the University of Wisconsin-Milwaukee, the Combinatorics Foundation, and the National Science Foundation (NSF Grant DMS-1953445). 
We would like to thank Ilkyoo Choi, The Nguyen, Florian Pfender, Youngho Yoo, and Yufei Zhang for helpful conversations at the start of the project.

\bibliography{bibfile}

\begin{thebibliography}{10}

\bibitem{alon-codes}
N.~Alon.
\newblock Graph-codes.
\newblock {\em European Journal of Combinatorics}, 116:103880, 2024.

\bibitem{AFM}
M.~Axenovich, Z.~F\"uredi, and D.~Mubayi.
\newblock On generalized {R}amsey theory: The bipartite case.
\newblock {\em Journal of Combinatorial Theory, Series B}, 79(1):66--86, 2000.

\bibitem{bb-25}
D.~Bal and P.~Bennett.
\newblock Edge-coloring {$K_{n, n}$} with no 2-colored {$C_{2k}$}.
\newblock {\em arXiv:2507.13329}, 2025.

\bibitem{BBHZ}
D.~Bal, P.~Bennett, E.~Heath, and S.~Zerbib.
\newblock Generalized {R}amsey numbers of cycles, paths, and hypergraphs.
\newblock {\em arXiv:2405.15904}, 2024.

\bibitem{BEHK}
J.~Balogh, S.~English, E.~Heath, and R.~A. Krueger.
\newblock Lower bounds on the {E}rd{\H{o}}s--{G}y{\'a}rf{\'a}s problem via color energy graphs.
\newblock {\em Journal of Graph Theory}, 103(2):378--409, 2023.

\bibitem{BCDP}
P.~Bennett, R.~Cushman, A.~Dudek, and P.~Pra{\l}at.
\newblock The {E}rd{\H{o}}s--{G}y\'arf\'as function $f(n,4,5)=\frac{5}{6}n+o(n)$ -- so {G}y\'arf\'as was right.
\newblock {\em arXiv:2207.02920}, 2022.

\bibitem{BDLP}
P.~Bennett, M.~Delcourt, L.~Li, and L.~Postle.
\newblock On generalized {R}amsey numbers in the sublinear regime.
\newblock {\em arXiv:2212.10542}, 2022.

\bibitem{BED}
P.~Bennett, A.~Dudek, and S.~English.
\newblock A random coloring process gives improved bounds for the {E}rd{\H{o}}s--{G}y\'arf\'as problem on generalized {R}amsey numbers.
\newblock {\em Electronic Journal of Combinatorics}, 32:P2.21, 2025.

\bibitem{BHZ}
P.~Bennett, E.~Heath, and S.~Zerbib.
\newblock Edge-coloring a graph {$G$} so that every copy of a graph {$H$} has an odd color class.
\newblock {\em arXiv:2307.01314}, 2023.

\bibitem{BDLPodd}
S.~Boyadzhiyska, S.~Das, T.~Lesgourgues, and K.~Petrova.
\newblock Odd-{R}amsey numbers of complete bipartite graphs.
\newblock {\em arXiv:2410.05887}, 2024.

\bibitem{CH1}
A.~Cameron and E.~Heath.
\newblock A $(5,5)$-colouring of ${K}_n$ with few colours.
\newblock {\em Combinatorics, Probability and Computing}, 27(6):892--912, 2018.

\bibitem{CH2}
A.~Cameron and E.~Heath.
\newblock New upper bounds for the {E}rd{\H{o}}s--{G}y\'arf\'as problem on generalized {R}amsey numbers.
\newblock {\em Combinatorics, Probability and Computing}, 32(2):349–362, 2023.

\bibitem{CFLS}
D.~Conlon, J.~Fox, C.~Lee, and B.~Sudakov.
\newblock The {E}rd{\H{o}}s--{G}y{\'a}rf{\'a}s problem on generalized {R}amsey numbers.
\newblock {\em Proceedings of the London Mathematical Society}, 110(1):1--18, 2015.

\bibitem{DP}
M.~Delcourt and L.~Postle.
\newblock Finding an almost perfect matching in a hypergraph avoiding forbidden submatchings.
\newblock {\em arXiv:2204.08981}, 2022.

\bibitem{Erdos-Shelah1}
P.~Erd\H{o}s.
\newblock Problems and results on finite and infinite graphs.
\newblock In {\em Recent advances in graph theory ({P}roc. {S}econd {C}zechoslovak {S}ympos., {P}rague, 1974)}, pages 183--192. (loose errata). Academia, Prague, 1975.

\bibitem{er-gy}
P.~Erd{\H{o}}s and A.~Gy{\'a}rf{\'a}s.
\newblock A variant of the classical {R}amsey problem.
\newblock {\em Combinatorica}, 17(4):459--467, 1997.

\bibitem{GXZ}
G.~Ge, Z.~Xu, and Y.~Zhang.
\newblock A new variant of the {E}rd{\H{o}}s--{G}y\'arf\'as problem on {$K_5$}.
\newblock {\em arXiv:2306.14682}, 2023.

\bibitem{GJKKL}
S.~Glock, F.~Joos, J.~Kim, M.~K{\"u}hn, and L.~Lichev.
\newblock Conflict-free hypergraph matchings.
\newblock {\em Journal of the London Mathematical Society}, 109(5):e12899, 2024.

\bibitem{GHPSZ}
E.~Gomez-Leos, E.~Heath, A.~Parker, C.~Schwieder, and S.~Zerbib.
\newblock New bounds on the generalized {R}amsey number {$f (n, 5, 8)$}.
\newblock {\em Discrete Mathematics}, 347(7):114012, 2024.

\bibitem{JM}
F.~Joos and D.~Mubayi.
\newblock Ramsey theory constructions from hypergraph matchings.
\newblock {\em Proceedings of the American Mathematical Society}, 152(11):4537--4550, 2024.

\bibitem{JMS}
F.~Joos, D.~Mubayi, and Z.~Smith.
\newblock Conflict-free hypergraph matchings and coverings.
\newblock {\em arXiv:2407.18144}, 2024.

\bibitem{LM}
A.~Lane and N.~Morrison.
\newblock Generalized {R}amsey numbers via conflict-free hypergraph matchings.
\newblock {\em arXiv:2405.16653}, 2024.

\bibitem{Mubayi1}
D.~Mubayi.
\newblock Edge-coloring cliques with three colors on all 4-cliques.
\newblock {\em Combinatorica}, 18(2):293--296, 1998.

\bibitem{mubayi2004}
D.~Mubayi.
\newblock An explicit construction for a {R}amsey problem.
\newblock {\em Combinatorica}, 24(2):313--324, 2004.

\bibitem{Versteegen}
L.~Versteegen.
\newblock Upper bounds for linear graph codes.
\newblock {\em Random Structures \& Algorithms}, 66(1):e21263, 2025.

\bibitem{Yip}
F.~Yip.
\newblock A variant of the {E}rd{\H {o}}s--{G}y{\'a}rf{\'a}s problem for {$K_8$}.
\newblock {\em arXiv:2409.16778}, 2024.

\end{thebibliography}
\bibliographystyle{abbrv}

\end{document}